\newcommand{\SN}{\hat{S}_{N}^{-1}}
\newcommand{\gammanrho}{\gamma_N(\rho)}
\newcommand{\qnrho}{\frac{\alpha(\rho)}{\gamma_N(\rho)}}
\newcommand{\Si}{\hat{S}_{(i)}^{-1}}
\newcommand{\Sj}{\hat{S}_{(j)}^{-1}}
\newcommand{\Sij}{\hat{S}_{(i,j)}^{-1}}
\newcommand{\Sjk}{\hat{S}_{(j,k)}^{-1}}
\newcommand{\Sijk}{\hat{S}_{(i,j,k)}^{-1}}
\newcommand{\Sijsquare}{\hat{S}_{(i,j)}^{-2}}
\newcommand{\RR}{{\mathbb{R}}}
\newcommand{\NN}{{\mathbb{N}}}
\newcommand{\ZZ}{{\mathbb{Z}}}
\newcommand{\CC}{{\mathbb{C}}}
\newcommand{\trans}{{\sf T}}
\newcommand{\first}{c}
\newcommand{\second}{d}
\newcommand{\asto}{\overset{\rm a.s.}{\longrightarrow}}
\newcommand{\lawto}{\overset{\mathcal L}{\longrightarrow}}
\newcommand{\EE}{{\rm E}}
\newcommand{\E}{{\rm E}}
\DeclareMathOperator{\tr}{tr}
\DeclareMathOperator{\argmin}{argmin}
\newproof{proof}{Proof}
\newcounter{ctheorem}
\newtheorem{theorem}[ctheorem]{Theorem}
\newcounter{cassumption}
\newtheorem{assumption}[cassumption]{Assumption}
\newcounter{cproposition}
\newtheorem{proposition}[cproposition]{Proposition}
\newcounter{ccorollary}
\newtheorem{corollary}[ccorollary]{Corollary}
\newcounter{clemma}
\newtheorem{lemma}[clemma]{Lemma}
\journal{Journal of Multivariate Analysis}
\begin{document}

\begin{frontmatter}

\title{Second order statistics of robust estimators of scatter. \\ Application to GLRT detection for elliptical signals\tnoteref{t1}}
\tnotetext[t1]{ Couillet's work is supported by the ERC MORE EC--120133. Pascal's work is supported by the DGA grant no. 2013.60.0011.00.470.75.01.}

\author[supelec]{Romain Couillet}
\ead{romain.couillet@supelec.fr}
\address[supelec]{Telecommunication department, Sup\'elec, Gif sur Yvette, France}

\author[kaust]{Abla Kammoun}
\ead{abla.kammoun@kaust.edu.sa}
\address[kaust]{King Abdullah's University of Science and Technology, Saudi Arabia}

\author[sondra]{Fr\'ed\'eric Pascal}
\ead{frederic.pascal@supelec.fr}
\address[sondra]{SONDRA Laboratory, Sup\'elec, Gif sur Yvette, France}

\begin{abstract}
	A central limit theorem for bilinear forms of the type $a^*\hat{C}_N(\rho)^{-1}b$, where $a,b\in\CC^N$ are unit norm deterministic vectors and $\hat{C}_N(\rho)$ a robust-shrinkage estimator of scatter parametrized by $\rho$ and built upon $n$ independent elliptical vector observations, is presented. The fluctuations of $a^*\hat{C}_N(\rho)^{-1}b$ are found to be of order $N^{-\frac12}$ and to be the same as those of $a^*\hat{S}_N(\rho)^{-1}b$ for $\hat{S}_N(\rho)$ a matrix of a theoretical tractable form. This result is exploited in a classical signal detection problem to provide an improved detector which is both robust to elliptical data observations (e.g., impulsive noise) and optimized across the shrinkage parameter $\rho$.
\end{abstract}	

\begin{keyword}
random matrix theory \sep robust estimation \sep central limit theorem \sep GLRT.
\end{keyword}

\end{frontmatter}

\section{Introduction}
\label{sec:intro}

As an aftermath of the growing interest for large dimensional data analysis in machine learning, in a recent series of articles \citep{COU13,COU13b,COU14,ZHA14,KAR13}, several estimators from the field of robust statistics (dating back to the seventies) started to be explored under the assumption of commensurably large sample ($n$) and population ($N$) dimensions. Robust estimators were originally designed to turn classical estimators into outlier- and impulsive noise-resilient estimators, which is of considerable importance in the recent big data paradigm. Among these estimation methods, robust regression was studied in \citep{KAR13} which reveals that, in the large $N,n$ regime, the difference in norm between estimated and true regression vectors (of size $N$) tends almost surely to a positive constant which depends on the nature of the data and of the robust regressor. In parallel, and of more interest to the present work, \citep{COU13,COU13b,COU14,ZHA14} studied the limiting behavior of several classes of robust estimators $\hat{C}_N$ of scatter (or covariance) matrices $C_N$ based on independent zero-mean elliptical observations $x_1,\ldots,x_n\in\CC^N$. Precisely, \citep{COU13} shows that, letting $N/n<1$ and $\hat{C}_N$ be the (almost sure) unique solution to
\begin{align*}
	\hat{C}_N &= \frac1n \sum_{i=1}^n u\left( \frac1Nx_i^*\hat{C}_N^{-1}x_i \right) x_ix_i^*
\end{align*}
under some appropriate conditions over the nonnegative function $u$ (corresponding to Maronna's M-estimator \citep{MAR76}), $\Vert\hat{C}_N-\hat{S}_N\Vert\asto 0$ in spectral norm as $N,n\to\infty$ with $N/n\to c\in(0,1)$, where $\hat{S}_N$ follows a standard random matrix model (such as studied in \citep{CHO95,HAC13}). In \citep{ZHA14}, the important scenario where $u(x)=1/x$ (referred to as Tyler's M-estimator) is treated. It is in particular shown for this model that for identity scatter matrices the spectrum of $\hat{C}_N$ converges weakly to the Mar\u{c}enko--Pastur law \citep{MAR67} in the large $N,n$ regime. Finally, for $N/n\to c\in(0,\infty)$, \citep{COU14} studied yet another robust estimation model defined, for each $\rho\in(\max\{0,1-n/N\},1]$, by $\hat{C}_N=\hat{C}_N(\rho)$, unique solution to
\begin{align}
	\label{eq:hatCN}
	\hat{C}_N(\rho) &= \frac1n \sum_{i=1}^n \frac{x_ix_i^*}{\frac1Nx_i^*\hat{C}_N^{-1}(\rho)x_i} + \rho I_N.
\end{align}
This estimator, proposed in \citep{PAS13}, corresponds to a hybrid robust-shrinkage estimator reminding Tyler's M-estimator of scale \citep{TYL87} and Ledoit--Wolf's shrinkage estimator \citep{LED04}. This estimator is particularly suited to scenarios where $N/n$ is not small, for which other estimators are badly conditioned if not undefined. For this model, it is shown in \citep{COU14} that $\sup_{\rho}\Vert\hat{C}_N(\rho)-\hat{S}_N(\rho)\Vert\asto 0$ where $\hat{S}_N(\rho)$ also follows a classical random matrix model. 

The aforementioned approximations $\hat{S}_N$ of the estimators $\hat{C}_N$, the structure of which is well understood (as opposed to $\hat{C}_N$ which is only defined implicitly), allow for both a good apprehension of the limiting behavior of $\hat{C}_N$ and more importantly for a better usage of $\hat{C}_N$ as an appropriate substitute for sample covariance matrices in various estimation problems in the large $N,n$ regime. The convergence in norm $\Vert\hat{C}_N-\hat{S}_N\Vert\asto 0$ is indeed sufficient in many cases to produce new consistent estimation methods based on $\hat{C}_N$ by simply replacing $\hat{C}_N$ by $\hat{S}_N$ in the problem defining equations. For example, the results of \citep{COU13b} led to the introduction of novel consistent estimators based on functionals of $\hat{C}_N$ (of the Maronna type) for power and direction-of-arrival estimation in array processing in the presence of impulsive noise or rare outliers \citep{COU14c}. Similarly, in \citep{COU14}, empirical methods were designed to estimate the parameter $\rho$ which minimizes the expected Frobenius norm $\tr [(\hat{C}_N(\rho)-C_N)^2]$, of interest for various outlier-prone applications dealing with non-small ratios $N/n$.\footnote{Other metrics may also be considered as in e.g.\@ \citep{YAN14} with $\rho$ chosen to minimize the return variance in a portfolio optimization problem.}

Nonetheless, when replacing $\hat{C}_N$ for $\hat{S}_N$ in deriving consistent estimates, if the convergence $\Vert\hat{C}_N-\hat{S}_N\Vert\asto 0$ helps in producing novel consistent estimates, this convergence (which comes with no particular speed) is in general not sufficient to assess the performance of the estimator for large but finite $N,n$. Indeed, when second order results such as central limit theorems need be established, say at rate $N^{-\frac12}$, to proceed similarly to the replacement of $\hat{C}_N$ by $\hat{S}_N$ in the analysis, one would ideally demand that $\Vert\hat{C}_N-\hat{S}_N\Vert=o(N^{-\frac12})$; but such a result, we believe, unfortunately does not hold. This constitutes a severe limitation in the exploitation of robust estimators as their performance as well as optimal fine-tuning often rely on second order performance. Concretely, for practical purposes in the array processing application of \citep{COU14c}, one may naturally ask which choice of the $u$ function is optimal to minimize the variance of (consistent) power and angle estimates. This question remains unanswered to this point for lack of better theoretical results.

\bigskip

The main purpose of the article is twofold. From a technical aspect, taking the robust shrinkage estimator $\hat{C}_N(\rho)$ defined by \eqref{eq:hatCN} as an example, we first show that, although the convergence $\Vert\hat{C}_N(\rho)-\hat{S}_N(\rho)\Vert\asto 0$ (from \citep[Theorem~1]{COU14}) may not be extensible to a rate $O(N^{1-\varepsilon})$, one has the bilinear form convergence $N^{1-\varepsilon} a^*(\hat{C}_N^k(\rho)-\hat{S}_N^k(\rho))b\asto 0$ for each $\varepsilon>0$, each $a,b\in\CC^N$ of unit norm, and each $k\in\ZZ$. This result implies that, if $\sqrt{N}a^*\hat{S}_N^k(\rho)b$ satisfies a central limit theorem, then so does $\sqrt{N}a^*\hat{C}_N^k(\rho)b$ with the same limiting variance. This result is of fundamental importance to any statistical application based on such quadratic forms. Our second contribution is to exploit this result for the specific problem of signal detection in impulsive noise environments via the generalized likelihood-ratio test, particularly suited for radar signals detection under elliptical noise \citep{CON95,PAS13}. In this context, we determine the shrinkage parameter $\rho$ which minimizes the probability of false detections and provide an empirical consistent estimate for this parameter, thus improving significantly over traditional sample covariance matrix-based estimators.

The remainder of the article introduces our main results in Section~\ref{sec:results} which are proved in Section~\ref{sec:proof}. Technical elements of proof are provided in the appendix.

{\it Notations:} In the remainder of the article, we shall denote $\lambda_1(X),\ldots,\lambda_n(X)$ the real eigenvalues of $n\times n$ Hermitian matrices $X$. The norm notation $\Vert \cdot\Vert$ being considered is the spectral norm for matrices and Euclidean norm for vectors. The symbol $\imath$ is the complex $\sqrt{-1}$.

\section{Main Results}
\label{sec:results}

Let $N,n\in\NN$, $c_N\triangleq N/n$, and $\rho \in (\max\{0,1-c_N^{-1}\},1]$. 
Let also $x_1,\ldots,x_n\in\CC^N$ be $n$ independent random vectors defined by the following assumptions.
\begin{assumption}[Data vectors]
	\label{ass:x}
For $i\in\{1,\ldots,n\}$, $x_i=\sqrt{\tau_i}A_Nw_i=\sqrt{\tau_i}z_i$, where
\begin{itemize}
	\item $w_i\in\CC^N$ is Gaussian with zero mean and covariance $I_N$, independent across $i$;
	\item $A_NA_N^*\triangleq C_N\in\CC^{N\times N}$ is such that $\nu_N\triangleq \frac1N\sum_{i=1}^N{\bm\delta}_{\lambda_i(C_N)}\to \nu$ weakly, $\limsup_N\Vert C_N\Vert <\infty$, and $\frac1N\tr C_N=1$;
	\item $\tau_i>0$ are random or deterministic scalars. 
\end{itemize}
\end{assumption}
Under Assumption~\ref{ass:x}, letting $\tau_i=\tilde{\tau}_i/\Vert w_i\Vert$ for some $\tilde{\tau}_i$ independent of $w_i$, $x_i$ belongs to the class of elliptically distributed random vectors. Note that the normalization $\frac1N\tr C_N=1$ is not a restricting constraint since the scalars $\tau_i$ may absorb any other normalization.

It has been well-established by the robust estimation theory that, even if the $\tau_i$ are independent, independent of the $w_i$, and that $\lim_n \frac1n \sum_{i=1}^n\tau_i=1$ a.s., the sample covariance matrix $\frac1n\sum_{i=1}^n x_ix_i^*$ is in general a poor estimate for $C_N$. Robust estimators of scatter were designed for this purpose \citep{MAR76,TYL87}. In addition, if $N/n$ is non trivial, a linear shrinkage of these robust estimators against the identity matrix often helps in regularizing the estimator as established in e.g., \citep{PAS13,CHE11}. The robust estimator of scatter considered in this work, which we denote $\hat{C}_N(\rho)$, is defined (originally in \citep{PAS13}) as the unique solution to
\begin{align*}
	\hat{C}_N(\rho) &= (1-\rho) \frac1n \sum_{i=1}^n \frac{x_ix_i^*}{\frac1Nx_i\hat{C}_N^{-1}(\rho)x_i} + \rho I_N.
\end{align*}

\subsection{Theoretical Results}

The asymptotic behavior of this estimator was studied recently in \citep{COU14} in the regime where $N,n\to\infty$ in such a way that $c_N\to c\in(0,\infty)$. We first recall the important results of this article, which shall lay down the main concepts and notations of the present work.
First define
\begin{align*}
	\hat{S}_N(\rho) &= \frac1{\gamma_N(\rho)}\frac{1-\rho}{1-(1-\rho)c_N} \frac1n\sum_{i=1}^n z_iz_i^* + \rho I_N
\end{align*}
where $\gamma_N(\rho)$ is the unique solution to
\begin{align*}
	1 &= \int \frac{t}{\gamma_N(\rho)\rho + (1-\rho)t}\nu_N(dt).
\end{align*}

For any $\kappa>0$ small, define $\mathcal R_\kappa\triangleq [\kappa+\max\{0,1-c^{-1}\},1]$. Then, from \cite[Theorem~1]{COU14}, as $N,n\to\infty$ with $c_N\to c\in(0,\infty)$,
\begin{align*}
	\sup_{\rho \in \mathcal R_\kappa} \left\Vert \hat{C}_N(\rho) - \hat{S}_N(\rho) \right\Vert \asto 0.
\end{align*}

A careful analysis of the proof of \cite[Theorem~1]{COU14} (which is performed in Section~\ref{sec:proof}) reveals that the above convergence can be refined as
\begin{align}
	\label{eq:cv12-eps}
	\sup_{\rho \in \mathcal R_\kappa} N^{\frac12-\varepsilon} \left\Vert \hat{C}_N(\rho) - \hat{S}_N(\rho) \right\Vert \asto 0
\end{align}
for each $\varepsilon>0$. This suggests that (well-behaved) functionals of $\hat{C}_N(\rho)$ fluctuating at a slower speed than $N^{-\frac12+\varepsilon}$ for some $\varepsilon>0$ follow the same statistics as the same functionals with $\hat{S}_N(\rho)$ in place of $\hat{C}_N(\rho)$. However, this result is quite weak as most limiting theorems (starting with the classical central limit theorems for independent scalar variables) deal with fluctuations of order $N^{-\frac12}$ and sometimes in random matrix theory of order $N^{-1}$. In our opinion, the convergence speed \eqref{eq:cv12-eps} cannot be improved to a rate $N^{-\frac12}$. Nonetheless, thanks to an averaging effect documented in Section~\ref{sec:proof}, the fluctuation of special forms of functionals of $\hat{C}_N(\rho)$ can be proved to be much slower. Although among these functionals we could have considered linear functionals of the eigenvalue distribution of $\hat{C}_N(\rho)$, our present concern (driven by more obvious applications) is rather on bilinear forms of the type $a^*\hat{C}_N^k(\rho)b$ for some $a,b\in\CC^N$ with $\Vert a\Vert=\Vert b\Vert=1$, $k\in\ZZ$.

Our first main result is the following.
\begin{theorem}[Fluctuation of bilinear forms]
	\label{th:bilin}
Let $a,b\in\CC^N$ with $\Vert a\Vert=\Vert b\Vert=1$. Then, as $N,n\to\infty$ with $c_N\to c\in(0,\infty)$, for any $\varepsilon>0$ and every $k\in\ZZ$,
\begin{align*}
	\sup_{\rho\in\mathcal R_\kappa} N^{1-\varepsilon} \left| a^*\hat{C}_N^{k}(\rho)b - a^*\hat{S}_N^{k}(\rho)b \right| &\asto 0.
\end{align*}
\end{theorem}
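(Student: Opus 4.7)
The plan is to upgrade the spectral-norm rate $\|\hat{C}_N(\rho) - \hat{S}_N(\rho)\| = O(N^{-1/2+\varepsilon})$ implicit in \eqref{eq:cv12-eps} into a bilinear-form rate $O(N^{-1+\varepsilon})$, by extracting an extra factor $N^{-1/2}$ from the structural randomness of the difference. The first reduction is the telescoping identity
\[
  \hat{C}_N^{k}(\rho) - \hat{S}_N^{k}(\rho)
    = \sum_{j} \hat{C}_N^{k-1-j}(\rho)\, \bigl(\hat{C}_N(\rho) - \hat{S}_N(\rho)\bigr)\, \hat{S}_N^{j}(\rho),
\]
together with the a.s.\ bounds $\sup_{\rho\in\mathcal R_\kappa}\|\hat{C}_N^{\pm 1}(\rho)\|$, $\sup_{\rho\in\mathcal R_\kappa}\|\hat{S}_N^{\pm 1}(\rho)\| = O(1)$. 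After this reduction it suffices to control $a^{*}M_1(\hat{C}_N(\rho) - \hat{S}_N(\rho))M_2 b$ at rate $N^{-1+\varepsilon}$, uniformly in matrices $M_1,M_2$ built from $\hat{C}_N^{\pm 1},\hat{S}_N^{\pm 1}$ with bounded operator norm.

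A direct comparison of the defining equations then yields the factorisation
\[
  \hat{C}_N(\rho) - \hat{S}_N(\rho)
    = \frac{1-\rho}{n} \sum_{i=1}^n \alpha_i(\rho)\, z_i z_i^{*}, \qquad
  \alpha_i(\rho) \triangleq \frac{\tau_i}{d_i(\rho)} - \frac{1}{\gamma_N(\rho)(1-(1-\rho)c_N)},
\]
with $d_i(\rho) = \tfrac{1}{N} x_i^{*} \hat{C}_N^{-1}(\rho) x_i$. A refined inspection of the fixed-point argument of \citep[Theorem~1]{COU14}, carried out in Section~\ref{sec:proof}, provides the coefficient-wise concentration
\[
  \sup_{\rho\in\mathcal R_\kappa}\, \max_{1\leq i\leq n} |\alpha_i(\rho)| = O(N^{-1/2+\varepsilon}) \quad \text{a.s.}
\]

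With this decomposition the reduced bilinear form becomes $\tfrac{1-\rho}{n}\sum_i \alpha_i(\rho)(a^{*}M_1 z_i)(z_i^{*}M_2 b)$. The crucial extra $N^{-1/2}$ is unlocked by replacing $M_1, M_2$ by leave-one-out counterparts $M_{1,(i)}, M_{2,(i)}$ built from the data with $x_i$ removed, which are genuinely independent of $z_i$; Sherman--Morrison applied to both $\hat{C}_N^{-1}$ and $\hat{S}_N^{-1}$ gives $\|M_k - M_{k,(i)}\| = O(N^{-1})$, so the substitution costs only $O(N^{-1+\varepsilon})$. Hanson--Wright concentration for Gaussian quadratic forms then yields
\[
  (a^{*} M_{1,(i)} z_i)(z_i^{*} M_{2,(i)} b) = a^{*} M_{1,(i)} C_N M_{2,(i)} b + \eta_i,
  \qquad \max_i |\eta_i| = O(N^{-1/2+\varepsilon}) \text{ a.s.}
\]
Cauchy--Schwarz controls the fluctuating part by $(\max_i|\alpha_i|)\sqrt{\tfrac{1}{n}\sum_i \eta_i^2} = O(N^{-1+\varepsilon})$. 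The remaining deterministic part $\tfrac{1-\rho}{n}\sum_i \alpha_i(\rho)\, a^{*} M_{1,(i)} C_N M_{2,(i)} b$ is handled through the sharp empirical-mean bound $\bar\alpha(\rho) \triangleq \tfrac{1}{n}\sum_i \alpha_i(\rho) = O(N^{-1+\varepsilon})$: this comes from the identity $\tfrac{1}{N}\tr(\hat{C}_N(\rho) - \hat{S}_N(\rho)) = (1-\rho)\bar\alpha(\rho) + $ a small remainder (using $\|z_i\|^2/N = 1 + O(N^{-1/2+\varepsilon})$), combined with trace-level random matrix concentration, which typically improves upon the operator-norm rate by a factor $N^{-1/2}$.

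Uniformity in $\rho\in\mathcal R_\kappa$ is obtained via an $\varepsilon$-net argument, using the a.s.\ Lipschitz dependence of $\rho\mapsto\hat{C}_N^{\pm 1}(\rho),\hat{S}_N^{\pm 1}(\rho)$ with polynomial-in-$N$ Lipschitz constant. The main obstacle will be the sharp mean estimate $|\bar\alpha(\rho)| = O(N^{-1+\varepsilon})$: obtaining a rate strictly better than $\max_i |\alpha_i(\rho)|$ requires a delicate trace-level propagation through the implicit fixed-point equation, where all $d_i(\rho)$ are mutually coupled via $\hat{C}_N(\rho)$. A secondary difficulty is the rigorous construction of the leave-one-out resolvents and the uniform-in-$(i,\rho)$ control of the Sherman--Morrison corrections, both of which rely on re-running the iterative estimates of \citep{COU14} with quantitative concentration inequalities in place of almost sure convergence.
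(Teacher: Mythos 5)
Your proposal captures the correct high-level intuition — telescoping for general $k$, a representation of $\hat C_N(\rho)-\hat S_N(\rho)$ as $\frac{\alpha(\rho)}{n}\sum_i(\tfrac1{d_i}-\tfrac1{\gamma_N})z_iz_i^*$, and the hope to extract an extra factor $N^{-1/2}$ from averaging over $i$ — but the two steps you lean on to realize that extra factor do not hold as stated.

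First, the claim $\max_i|\eta_i|=O(N^{-1/2+\varepsilon})$ is false. With $\eta_i=(a^*M_{1,(i)}z_i)(z_i^*M_{2,(i)}b)-a^*M_{1,(i)}C_NM_{2,(i)}b$, the underlying quadratic form is $z_i^*A_iz_i-\operatorname{tr}(A_iC_N)$ with $A_i=M_{2,(i)}ba^*M_{1,(i)}$, a rank-one matrix with $\Vert A_i\Vert_{\mathrm F}=O(1)$. Hanson--Wright (or direct Gaussian computation) then gives fluctuations of size $\Vert A_i\Vert_{\mathrm F}=O(1)$, not $O(N^{-1/2})$. Consequently the crude Cauchy--Schwarz bound only yields $\tfrac1n\vert\sum_i\alpha_i\eta_i\vert\le\max_i|\alpha_i|\cdot\max_i|\eta_i|=O(N^{-1/2+\varepsilon})$, which is merely the rate you started from; the missing $N^{-1/2}$ must come from cancellations \emph{across} $i$, not from per-term smallness. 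Second, the empirical-mean estimate $\bar\alpha(\rho)=O(N^{-1+\varepsilon})$ is not an input available to you: the identity $\tfrac1N\operatorname{tr}(\hat C_N(\rho)-\hat S_N(\rho))=\alpha(\rho)\bar\alpha(\rho)+O(N^{-1+\varepsilon})$ makes the desired bound on $\bar\alpha$ \emph{equivalent} to the trace estimate you invoke as evidence for it, and because $\hat C_N$ is only implicitly defined there is no off-the-shelf ``trace-level concentration'' to appeal to — this is precisely where the hard work lives. The paper resolves both issues in a structurally different way: it introduces the explicit surrogate $\tilde d_i(\rho)=\tfrac1Nz_i^*\hat S_{(i)}^{-1}(\rho)z_i$, splits $d_i-\gamma_N=(d_i-\tilde d_i)+(\tilde d_i-\gamma_N)$, proves $\max_i|d_i(\rho)-\tilde d_i(\rho)|=O(N^{-1+\varepsilon})$ by a fixed-point contradiction argument (Proposition~\ref{prop:2}), and controls the remaining averaged term $\tfrac1n\sum_ia^*\hat S_N^{-1}z_iz_i^*\hat S_N^{-1}b\,(\tilde d_i-\gamma_N)$ through a martingale-difference decomposition with Burkh\"older inequalities (Lemma~\ref{le:keylemma1}) — it is this martingale cancellation, not per-term smallness or an independent trace bound, that supplies the factor you are looking for. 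As written, your argument would need the key concentration lemma of the paper to close either gap.
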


Some comments and remarks are in order. First, we recall that central limit theorems involving bilinear forms of the type $a^*\hat{S}_N^k(\rho)b$ are classical objects in random matrix theory (see e.g.\@ \citep{KAM09,MES08} for $k=-1$), particularly common in signal processing and wireless communications. These central limit theorems in general show fluctuations at speed $N^{-\frac12}$. This indicates, taking $\varepsilon<\frac12$ in Theorem~\ref{th:bilin} and using the fact that almost sure convergence implies weak convergence, that $a^*\hat{C}_N^k(\rho)b$ exhibits the same fluctuations as $a^*\hat{S}_N^k(\rho)b$, the latter being classical and tractable while the former is quite intricate at the onset, due to the implicit nature of $\hat{C}_N(\rho)$. 

Of practical interest to many applications in signal processing is the case where $k=-1$. In the next section, we present a classical generalized maximum likelihood signal detection in impulsive noise, for which we shall characterize the shrinkage parameter $\rho$ that meets minimum false alarm rates.

\subsection{Application to Signal Detection}

In this section, we consider the hypothesis testing scenario by which an $N$-sensor array receives a vector $y\in\CC^N$ according to the following hypotheses
\begin{align*}
	y &= \left\{ 
		\begin{array}{ll}
			x &,~\mathcal H_0 \\
			\alpha p + x&,~\mathcal H_1
		\end{array}
		\right.
\end{align*}
in which $\alpha>0$ is some unknown scaling factor constant while $p\in\CC^N$ is deterministic and known at the sensor array (which often corresponds to a steering vector arising from a specific known angle), and $x$ is an impulsive noise distributed as $x_1$ according to Assumption~\ref{ass:x}.
For convenience, we shall take $\Vert p\Vert=1$.

Under $\mathcal H_0$ (the null hypothesis), a noisy observation from an impulsive source is observed while under $\mathcal H_1$ both information and noise are collected at the array. The objective is to decide on $\mathcal H_1$ versus $\mathcal H_0$ upon the observation $y$ and prior pure-noise observations $x_1,\ldots,x_n$ distributed according to Assumption~\ref{ass:x}. When $\tau_1,\ldots,\tau_n$ and $C_N$ are unknown, the corresponding generalized likelihood ratio test, derived in \citep{CON95}, reads
\begin{align*}
	T_N(\rho) &\overset{\mathcal H_1}{\underset{\mathcal H_0}{\gtrless}} \Gamma
\end{align*}
for some detection threshold $\Gamma$ where
\begin{align*}
	T_N(\rho) \triangleq \frac{|y^*\hat{C}_N^{-1}(\rho)p|}{\sqrt{y^*\hat{C}_N^{-1}(\rho)y}\sqrt{p^*\hat{C}_N^{-1}(\rho)p}}.
\end{align*}
More precisely, \citep{CON95} derived the detector $T_N(0)$ only valid when $n\geq N$. The relaxed detector $T_N(\rho)$ allows for a better conditioning of the estimator, in particular for $n\simeq N$. In \citep{PAS13}, $T_N(\rho)$ is used explicitly in a space-time adaptive processing setting but only simulation results were provided. Alternative metrics for similar array processing problems involve the signal-to-noise ratio loss minimization rather than likelihood ratio tests; in \citep{ABR13a,ABR13b}, the authors exploit the estimators $\hat{C}_N(\rho)$ but restrict themselves to the less tractable finite dimensional analysis.

Our objective is to characterize the false alarm performance of the detector. That is, provided $\mathcal H_0$ is the actual scenario (i.e.\@ $y=x$), we shall evaluate $P(T_N(\rho)>\Gamma)$. Since it shall appear that, under $\mathcal H_0$, $T_N(\rho)\asto 0$ for every fixed $\Gamma>0$ and every $\rho$, by dominated convergence $P(T_N(\rho)>\Gamma)\to 0$ which does not say much about the actual test performance for large but finite $N,n$. To avoid such empty statements, we shall then consider the non-trivial case where $\Gamma=N^{-\frac12}\gamma$ for some fixed $\gamma>0$. In this case our objective is to characterize the false alarm probability
\begin{align*}
	P\left( T_N(\rho) > \frac{\gamma}{\sqrt{N}} \right).
\end{align*}

Before providing this result, we need some further reminders from \citep{COU14}. First define
\begin{align*}
	\underline{\hat{S}}_N(\rho) &\triangleq (1-\rho)\frac1n\sum_{i=1}^n z_iz_i^* + \rho I_N.
\end{align*}
Then, from \cite[Lemma~1]{COU14}, for each $\rho\in (\max\{0,1-c^{-1}\},1]$,
\begin{align*}
	\frac{\hat{S}_N(\rho)}{\rho+\frac1{\gamma_N(\rho)}\frac{1-\rho}{1-(1-\rho)c}} = \underline{\hat{S}}_N(\underline\rho)
\end{align*}
where
\begin{align*}
	 \underline\rho \triangleq \frac{{\rho}}{\rho+\frac1{\gamma_N(\rho)}\frac{1-\rho}{1-(1-\rho)c}}.
\end{align*}
Moreover, the mapping $\rho\mapsto \underline\rho$ is continuously increasing from $(\max\{0,1-c^{-1}\},1]$ onto $(0,1]$.

From classical random matrix considerations (see e.g.\@ \citep{SIL95}), letting $Z=[z_1,\ldots,z_n]\in\CC^{N\times n}$, the empirical spectral distribution\footnote{That is the normalized counting measure of the eigenvalues.} of $(1-\rho)\frac1nZ^*Z$ almost surely admits a weak limit $\mu$. The Stieltjes transform $m(z)\triangleq \int (t-z)^{-1}\mu(dt)$ of $\mu$ at $z\in\CC\setminus {\rm Supp}(\mu)$ is the unique complex solution with positive (resp.\@ negative) imaginary part if $\Im[z]>0$ (resp.\@ $\Im[z]<0$) and unique real positive solution if $\Im[z]=0$ and $\Re[z]<0$ to
\begin{align*}
	m(z) &= \left( -z + c\int \frac{(1-\rho)t}{1+(1-\rho)tm(z)} \nu(dt) \right)^{-1}.
\end{align*}
We denote $m'(z)$ the derivative of $m(z)$ with respect to $z$ (recall that the Stieltjes transform of a positively supported measure is analytic, hence continuously differentiable, away from the support of the measure).

With these definitions in place and with the help of Theorem~\ref{th:bilin}, we are now ready to introduce the main result of this section.
\begin{theorem}[Asymptotic detector performance]
	\label{th:T}
Under hypothesis $\mathcal H_0$, as $N,n\to\infty$ with $c_N\to c\in(0,\infty)$,
\begin{align*}
	\sup_{\rho\in\mathcal R_\kappa} \left| P\left( T_N(\rho) > \frac{\gamma}{\sqrt{N}} \right) - \exp  \left( - \frac{\gamma^2}{2\sigma_N^2(\underline\rho)} \right) \right| &\to 0
\end{align*}
where $\rho\mapsto\underline\rho$ is the aforementioned mapping and
\begin{align*}
	\sigma_N^2(\underline{\rho}) &\triangleq \frac12 \frac{ p^*C_NQ_N^2(\underline\rho)p}{  p^*Q_N(\underline\rho)p \cdot \frac1N\tr C_NQ_N(\underline\rho)\cdot \left(1-c (1-\rho)^2 m(-\underline{\rho})^2 \frac1N\tr C_N^2Q_N^2(\underline\rho)\right)  }
\end{align*}
with $Q_N(\underline\rho)\triangleq (I_N + (1-\underline{\rho})m(-\underline{\rho})C_N)^{-1}$.
\end{theorem}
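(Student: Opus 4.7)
The strategy is to rewrite $T_N(\rho)$ as a tractable ratio, apply a conditional Gaussian identity to its numerator, and pass to deterministic equivalents via random matrix theory. First I would invoke Theorem~\ref{th:bilin} with $k=-1$ on each of the three scalars $y^*\hat{C}_N^{-1}(\rho)p$, $y^*\hat{C}_N^{-1}(\rho)y$, $p^*\hat{C}_N^{-1}(\rho)p$ entering $T_N(\rho)$: the differences with their $\hat{S}_N(\rho)$-counterparts are $o(N^{-1+\varepsilon})$ uniformly in $\rho\in\mathcal{R}_\kappa$ and therefore negligible on the $N^{-1/2}$ scale at which $T_N(\rho)$ fluctuates. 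Under $\mathcal{H}_0$ we have $y = x = \sqrt{\tau}z$ with $z = A_N w$, $w\sim\mathcal{CN}(0,I_N)$, and the positive scalar $\sqrt{\tau}$ enters numerator and denominator of $T_N(\rho)$ with the same power and hence cancels; the same happens to the scalar $\alpha(\rho)$ in the identity $\hat{S}_N(\rho) = \alpha(\rho)\underline{\hat{S}}_N(\underline{\rho})$. This reduces $T_N(\rho)$ to the same ratio built from the sample-covariance-plus-identity matrix $\underline{\hat{S}}_N(\underline{\rho})$, independent of $z$.

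I would then condition on $x_1,\ldots,x_n$. Since $z = A_N w$ with $w$ standard complex Gaussian, the scalar $z^*\underline{\hat{S}}_N^{-1}(\underline{\rho})p$ is \emph{exactly} zero-mean complex Gaussian with variance $V_N \triangleq p^*\underline{\hat{S}}_N^{-1}(\underline{\rho})C_N\underline{\hat{S}}_N^{-1}(\underline{\rho})p$, so $|z^*\underline{\hat{S}}_N^{-1}(\underline{\rho})p|^2/V_N$ is conditionally exponential with mean one. The trace lemma yields $\frac{1}{N}z^*\underline{\hat{S}}_N^{-1}(\underline{\rho})z = \frac{1}{N}\tr(C_N\underline{\hat{S}}_N^{-1}(\underline{\rho})) + o_P(1)$, while $p^*\underline{\hat{S}}_N^{-1}(\underline{\rho})p$ is deterministic given the noise. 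Assembling,
\begin{align*}
NT_N^2(\rho) = \frac{V_N}{\frac{1}{N}\tr(C_N\underline{\hat{S}}_N^{-1}(\underline{\rho}))\cdot p^*\underline{\hat{S}}_N^{-1}(\underline{\rho})p}\cdot E + o_P(1),
\end{align*}
with $E \sim \mathrm{Exp}(1)$ independent of the noise in the limit.

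The third step is to replace the three $\underline{\hat{S}}_N(\underline{\rho})$-dependent quantities by their deterministic equivalents. For $p^*\underline{\hat{S}}_N^{-1}(\underline{\rho})p$ and $\frac{1}{N}\tr(C_N\underline{\hat{S}}_N^{-1}(\underline{\rho}))$, these are classical resolvent bilinear/trace forms of Silverstein--Marchenko--Pastur type and reduce, up to multiplicative constants that cancel in the ratio, to $p^*Q_N(\underline{\rho})p$ and $\frac{1}{N}\tr(C_N Q_N(\underline{\rho}))$. The hard part will be $V_N$: it is a \emph{squared-resolvent} bilinear form $p^*R(-\underline{\rho})C_N R(-\underline{\rho})p$ whose deterministic equivalent is not delivered by the first-order Silverstein equation. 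The standard route is to differentiate the Silverstein fixed-point equation in $z$, or equivalently to use a Poincar\'e--Nash integration-by-parts argument on $w$, which produces a linear equation for the limit of $V_N$ featuring precisely the correction factor $(1 - c(1-\rho)^2 m(-\underline{\rho})^2 \frac{1}{N}\tr(C_N^2 Q_N^2(\underline{\rho})))^{-1}$ appearing in $\sigma_N^2(\underline{\rho})$; the prefactor $\tfrac{1}{2}$ then absorbs the complex-Gaussian convention $|\mathcal{CN}(0,\sigma^2)|^2\sim\mathrm{Exp}(\text{mean }\sigma^2)$.

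Combining the three steps gives $NT_N^2(\rho)\lawto 2\sigma_N^2(\underline{\rho})\,E$, hence $P(T_N(\rho) > \gamma/\sqrt{N}) = P(NT_N^2(\rho) > \gamma^2) \to \exp(-\gamma^2/(2\sigma_N^2(\underline{\rho})))$. Uniformity over the compact interval $\rho\in\mathcal{R}_\kappa$ follows from the uniform version of Theorem~\ref{th:bilin}, joint continuity in $\rho$ of the mapping $\rho\mapsto\underline{\rho}$ and of each deterministic equivalent, and a standard equicontinuity/covering argument.
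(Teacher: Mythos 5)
Your route is genuinely different from the paper's, and it is correct. The paper conditions on $y$ and invokes a bilinear-form central limit theorem (a characteristic-function result from Chatterjee et al.\ cited as \cite{CHA12}) for $\sqrt{N}\,A^*\bigl[\underline{\hat S}_N^{-1}(\underline\rho)-\tfrac{1}{\underline\rho}Q_N(\underline\rho)\bigr]A$ with $A=[y~p]$, then superposes the $y$-fluctuations of $A^*Q_N(\underline\rho)A$, combines the two Gaussian pieces via a L\'evy-continuity/Cram\'er--Wold argument, and finishes with the delta method (Slutsky) on the scalar $T_N$. You instead condition on the noise sample $z_1,\dots,z_n$ (hence on $\underline{\hat S}_N(\underline\rho)$) and observe that $z^*\underline{\hat S}_N^{-1}(\underline\rho)p$ is then \emph{exactly} complex Gaussian with variance $V_N=p^*\underline{\hat S}_N^{-1}(\underline\rho)C_N\underline{\hat S}_N^{-1}(\underline\rho)p$, so $|z^*\underline{\hat S}_N^{-1}(\underline\rho)p|^2/V_N$ is exactly $\mathrm{Exp}(1)$; you then divide by the denominator quantities, which concentrate at first order. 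This avoids the CLT machinery and the delta method entirely and yields the Rayleigh tail by direct computation, $P(NT_N^2>\gamma^2)=\EE[\exp(-\gamma^2/g_N)]$ with $g_N=V_N/(\frac1N\tr C_N\underline{\hat S}_N^{-1}\cdot p^*\underline{\hat S}_N^{-1}p)\to 2\sigma_N^2(\underline\rho)$ a.s., followed by dominated convergence. What you give up is that the paper's citation of \cite{CHA12} already packages the second-order deterministic-equivalent needed for the asymptotic variance; in your version that burden falls on the single squared-resolvent form $V_N$, whose deterministic equivalent carries the correction factor $\bigl(1 - c(1-\underline\rho)^2 m(-\underline\rho)^2\frac1N\tr C_N^2Q_N^2(\underline\rho)\bigr)^{-1}$ and does require the differentiation / integration-by-parts step you indicate. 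You correctly flag that this is the only non-first-order ingredient. (Two small slips, immaterial to the logic: the scaling factor linking $\hat S_N(\rho)$ to $\underline{\hat S}_N(\underline\rho)$ is $\rho+\alpha(\rho)/\gamma_N(\rho)$ rather than $\alpha(\rho)$ itself, and since $T_N$ is invariant under positive scalings of $\hat C_N^{-1}$ this factor cancels as you say; and you should note explicitly that although $E$ and $z^*\underline{\hat S}_N^{-1}(\underline\rho)z$ are both functions of $z$ and hence not conditionally independent, Slutsky handles the denominator since its relative fluctuation is $O_P(N^{-1/2})$.) For the uniformity claim, the paper works harder than ``a standard equicontinuity/covering argument'': it proves a modulus-of-continuity bound \eqref{eq:tightness_condition} via a Kolmogorov-type criterion from Kallenberg/Billingsley on the $\rho$-increments of $\sqrt{N}\underline T_N(\rho)$ before patching the pointwise limits together; you would need that estimate, or an equivalent, to make the covering argument rigorous.
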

Otherwise stated, $\sqrt{N}T_N(\rho)$ is uniformly well approximated by a Rayleigh distributed random variable $R_N(\underline\rho)$ with parameter $\sigma_N(\underline\rho)$. 
Simulation results are provided in Figure~\ref{fig:hist_detector_20} and Figure~\ref{fig:hist_detector_100} which corroborate the results of Theorem~\ref{th:T} for $N=20$ and $N=100$, respectively (for a single value of $\rho$ though). Comparatively, it is observed, as one would expect, that larger values for $N$ induce improved approximations in the tails of the approximating distribution.

\begin{figure}[h!]
  \centering
  \begin{tabular}{cc}
  \begin{tikzpicture}[font=\footnotesize,scale=.7]
    \renewcommand{\axisdefaulttryminticks}{4} 
    \tikzstyle{every major grid}+=[style=densely dashed]       
    \tikzstyle{every axis y label}+=[yshift=-10pt] 
    \tikzstyle{every axis x label}+=[yshift=5pt]
    \tikzstyle{every axis legend}+=[cells={anchor=west},fill=white,
        at={(0.98,0.98)}, anchor=north east, font=\scriptsize ]
    \begin{axis}[
      xmin=0,
      ymin=0,
      xmax=4,
      bar width=3pt,
      grid=major,
      ymajorgrids=false,
      scaled ticks=true,
      ylabel={Density}
      ]
      \addplot+[ybar,mark=none,color=black,fill=gray!40!white] coordinates{
(0.05,0.055)(0.15,0.138)(0.25,0.263)(0.35,0.324)(0.45,0.42)(0.55,0.516)(0.65,0.544)(0.75,0.626)(0.85,0.621)(0.95,0.631)(1.05,0.639)(1.15,0.618)(1.25,0.625)(1.35,0.588)(1.45,0.515)(1.55,0.512)(1.65,0.417)(1.75,0.395)(1.85,0.331)(1.95,0.3)(2.05,0.232)(2.15,0.186)(2.25,0.143)(2.35,0.093)(2.45,0.081)(2.55,0.065)(2.65,0.053)(2.75,0.031)(2.85,0.019)(2.95,0.01)(3.05,0.006)(3.15,0.002)(3.25,0.)(3.35,0.001)(3.45,0.)(3.55,0.)(3.65,0.)(3.75,0.)(3.85,0.)(3.95,0.)(4.05,0.)(4.15,0.)(4.25,0.)(4.35,0.)(4.45,0.)(4.55,0.)(4.65,0.)(4.75,0.)(4.85,0.)(4.95,0.)(5.05,0.)
      };
      \addplot[black,smooth,line width=1pt] plot coordinates{
	      (0.,0.)(0.1,0.109902)(0.2,0.21619)(0.3,0.315448)(0.4,0.40464)(0.5,0.481262)(0.6,0.543458)(0.7,0.590088)(0.8,0.620745)(0.9,0.635727)(1.,0.635965)(1.1,0.62292)(1.2,0.598449)(1.3,0.564673)(1.4,0.523829)(1.5,0.478146)(1.6,0.429733)(1.7,0.380485)(1.8,0.332025)(1.9,0.285669)(2.,0.24241)(2.1,0.202932)(2.2,0.167636)(2.3,0.136674)(2.4,0.109997)(2.5,0.087403)(2.6,0.068576)(2.7,0.053135)(2.8,0.040662)(2.9,0.030736)(3.,0.02295)(3.1,0.01693)(3.2,0.012338)(3.3,0.008885)(3.4,0.006321)(3.5,0.004445)(3.6,0.003088)(3.7,0.00212)(3.8,0.001439)(3.9,0.000965)(4.,0.00064)(4.1,0.000419)(4.2,0.000271)(4.3,0.000174)(4.4,0.00011)(4.5,0.000069)(4.6,0.000042)(4.7,0.000026)(4.8,0.000016)(4.9,0.000009)(5.,0.000006)	      
      };
      \legend{ {Empirical hist.\@ of $T_N(\rho)$},{Distribution of $R_N(\underline\rho)$} }
    \end{axis}
  \end{tikzpicture}
  &
  \begin{tikzpicture}[font=\footnotesize,scale=.7]
    \renewcommand{\axisdefaulttryminticks}{4} 
    \tikzstyle{every major grid}+=[style=densely dashed]       
    \tikzstyle{every axis y label}+=[yshift=-10pt] 
    \tikzstyle{every axis x label}+=[yshift=5pt]
    \tikzstyle{every axis legend}+=[cells={anchor=west},fill=white,
        at={(0.98,0.02)}, anchor=south east, font=\scriptsize ]
    \begin{axis}[
      xmin=0,
      ymin=0,
      xmax=4,
      ymax=1,
      bar width=1.5pt,
      grid=major,
      ymajorgrids=false,
      scaled ticks=true,
      mark repeat=10,
      ylabel={Cumulative distribution}
      ]
      \addplot[black,mark=*] coordinates{
	      (0.,0.)(0.006969,0.01)(0.140717,0.02)(0.20486,0.03)(0.244764,0.04)(0.283584,0.05)(0.315349,0.06)(0.346699,0.07)(0.377842,0.08)(0.406186,0.09)(0.429656,0.1)(0.451618,0.11)(0.475542,0.12)(0.500291,0.13)(0.520232,0.14)(0.539641,0.15)(0.559926,0.16)(0.577158,0.17)(0.596035,0.18)(0.616435,0.19)(0.638308,0.2)(0.657572,0.21)(0.673424,0.22)(0.68921,0.23)(0.704637,0.24)(0.719824,0.25)(0.736868,0.26)(0.752971,0.27)(0.768967,0.28)(0.784846,0.29)(0.801679,0.3)(0.819511,0.31)(0.835377,0.32)(0.848875,0.33)(0.866909,0.34)(0.880432,0.35)(0.89928,0.36)(0.913679,0.37)(0.932125,0.38)(0.948459,0.39)(0.965796,0.4)(0.977908,0.41)(0.992774,0.42)(1.010217,0.43)(1.023342,0.44)(1.039632,0.45)(1.053273,0.46)(1.068361,0.47)(1.086139,0.48)(1.105446,0.49)(1.121136,0.5)(1.136039,0.51)(1.151077,0.52)(1.168546,0.53)(1.184198,0.54)(1.200874,0.55)(1.21731,0.56)(1.233584,0.57)(1.248894,0.58)(1.266999,0.59)(1.28289,0.6)(1.297469,0.61)(1.314827,0.62)(1.331183,0.63)(1.34982,0.64)(1.366348,0.65)(1.381957,0.66)(1.398737,0.67)(1.418269,0.68)(1.435907,0.69)(1.456034,0.7)(1.476974,0.71)(1.496225,0.72)(1.514232,0.73)(1.530731,0.74)(1.551749,0.75)(1.570442,0.76)(1.591612,0.77)(1.614273,0.78)(1.63925,0.79)(1.664769,0.8)(1.685157,0.81)(1.709925,0.82)(1.733259,0.83)(1.759881,0.84)(1.787953,0.85)(1.816399,0.86)(1.844413,0.87)(1.872439,0.88)(1.906016,0.89)(1.938662,0.9)(1.974716,0.91)(2.008711,0.92)(2.050453,0.93)(2.097168,0.94)(2.146087,0.95)(2.201662,0.96)(2.268444,0.97)(2.365899,0.98)(2.481182,0.99)(2.634444,1.)	      
      };
      \addplot[black,smooth] plot coordinates{
	      (0.,0.)(0.1,0.00551)(0.2,0.02186)(0.3,0.048514)(0.4,0.084613)(0.5,0.129022)(0.6,0.180384)(0.7,0.237194)(0.8,0.297868)(0.9,0.36082)(1.,0.424522)(1.1,0.487569)(1.2,0.548725)(1.3,0.606949)(1.4,0.661424)(1.5,0.711554)(1.6,0.756962)(1.7,0.797473)(1.8,0.833086)(1.9,0.863948)(2.,0.890323)(2.1,0.912556)(2.2,0.931049)(2.3,0.946228)(2.4,0.958527)(2.5,0.968364)(2.6,0.976133)(2.7,0.982192)(2.8,0.986859)(2.9,0.990409)(3.,0.993078)(3.1,0.995058)(3.2,0.996511)(3.3,0.997564)(3.4,0.998318)(3.5,0.998851)(3.6,0.999224)(3.7,0.999481)(3.8,0.999657)(3.9,0.999776)(4.,0.999855)(4.1,0.999908)(4.2,0.999942)(4.3,0.999963)(4.4,0.999977)(4.5,0.999986)(4.6,0.999992)(4.7,0.999995)(4.8,0.999997)(4.9,0.999998)(5.,0.999999)	      
      };
      \legend{ {Empirical dist.\@ of $T_N(\rho)$},{Distribution of $R_N(\underline\rho)$} }
    \end{axis}
  \end{tikzpicture}
  \end{tabular}
  \caption{Histogram distribution function of the $\sqrt{N}T_N(\rho)$ versus $R_N(\underline\rho)$, $N=20$, $p=N^{-\frac12}[1,\ldots,1]^\trans$, $[C_N]_{ij}=0.7^{|i-j|}$, $c_N=1/2$, $\rho=0.2$.}
  \label{fig:hist_detector_20}
\end{figure}
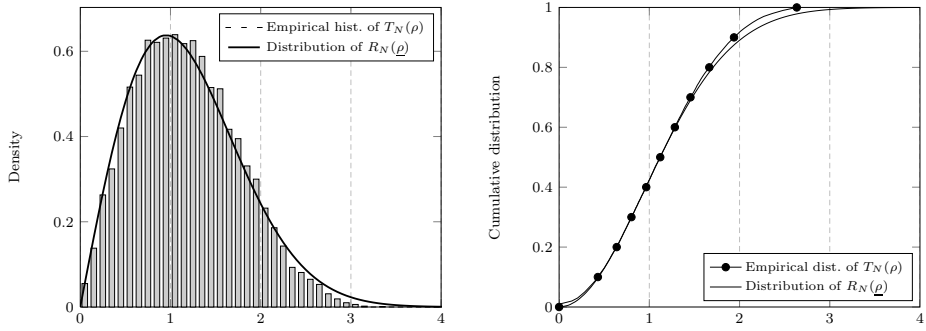

\begin{figure}[h!]
  \centering
  \begin{tabular}{cc}
  \begin{tikzpicture}[font=\footnotesize,scale=.7]
    \renewcommand{\axisdefaulttryminticks}{4} 
    \tikzstyle{every major grid}+=[style=densely dashed]       
    \tikzstyle{every axis y label}+=[yshift=-10pt] 
    \tikzstyle{every axis x label}+=[yshift=5pt]
    \tikzstyle{every axis legend}+=[cells={anchor=west},fill=white,
        at={(0.98,0.98)}, anchor=north east, font=\scriptsize ]
    \begin{axis}[
      xmin=0,
      ymin=0,
      xmax=4,
      bar width=3pt,
      grid=major,
      ymajorgrids=false,
      scaled ticks=true,
      ylabel={Density}
      ]
      \addplot+[ybar,mark=none,color=black,fill=gray!40!white] coordinates{
	      (0.05,0.062)(0.15,0.152)(0.25,0.292)(0.35,0.332)(0.45,0.412)(0.55,0.515)(0.65,0.576)(0.75,0.592)(0.85,0.614)(0.95,0.669)(1.05,0.638)(1.15,0.617)(1.25,0.615)(1.35,0.546)(1.45,0.532)(1.55,0.426)(1.65,0.391)(1.75,0.378)(1.85,0.331)(1.95,0.252)(2.05,0.225)(2.15,0.185)(2.25,0.144)(2.35,0.125)(2.45,0.098)(2.55,0.063)(2.65,0.066)(2.75,0.052)(2.85,0.032)(2.95,0.021)(3.05,0.017)(3.15,0.007)(3.25,0.008)(3.35,0.006)(3.45,0.003)(3.55,0.)(3.65,0.003)(3.75,0.002)(3.85,0.)(3.95,0.001)(4.05,0.)(4.15,0.)(4.25,0.)(4.35,0.)(4.45,0.)(4.55,0.)(4.65,0.)(4.75,0.)(4.85,0.)(4.95,0.)(5.05,0.)
      };
      \addplot[black,smooth,line width=1pt] plot coordinates{
	      (0.,0.)(0.1,0.109131)(0.2,0.214698)(0.3,0.313332)(0.4,0.402036)(0.5,0.478332)(0.6,0.540381)(0.7,0.587044)(0.8,0.617904)(0.9,0.633237)(1.,0.633945)(1.1,0.621449)(1.2,0.597572)(1.3,0.564395)(1.4,0.524123)(1.5,0.478956)(1.6,0.430981)(1.7,0.382081)(1.8,0.333873)(1.9,0.287674)(2.,0.244483)(2.1,0.204995)(2.2,0.169624)(2.3,0.138538)(2.4,0.111702)(2.5,0.088927)(2.6,0.069911)(2.7,0.054281)(2.8,0.041628)(2.9,0.031536)(3.,0.023602)(3.1,0.017452)(3.2,0.01275)(3.3,0.009204)(3.4,0.006566)(3.5,0.004629)(3.6,0.003225)(3.7,0.002221)(3.8,0.001511)(3.9,0.001017)(4.,0.000676)(4.1,0.000444)(4.2,0.000289)(4.3,0.000185)(4.4,0.000118)(4.5,0.000074)(4.6,0.000046)(4.7,0.000028)(4.8,0.000017)(4.9,0.00001)(5.,0.000006)
      };
      \legend{ {Empirical hist.\@ of $T_N(\rho)$},{Density of $R_N(\underline\rho)$} }
    \end{axis}
  \end{tikzpicture}
  &
  \begin{tikzpicture}[font=\footnotesize,scale=.7]
    \renewcommand{\axisdefaulttryminticks}{4} 
    \tikzstyle{every major grid}+=[style=densely dashed]       
    \tikzstyle{every axis y label}+=[yshift=-10pt] 
    \tikzstyle{every axis x label}+=[yshift=5pt]
    \tikzstyle{every axis legend}+=[cells={anchor=west},fill=white,
        at={(0.98,0.02)}, anchor=south east, font=\scriptsize ]
    \begin{axis}[
      xmin=0,
      ymin=0,
      xmax=4,
      ymax=1,
      bar width=1.5pt,
      grid=major,
      ymajorgrids=false,
      scaled ticks=true,
      mark repeat=10,
      ylabel={Cumulative distribution}
      ]
      \addplot[black,mark=*] coordinates{
	      (0.,0.)(0.004147,0.01)(0.134196,0.02)(0.194839,0.03)(0.231715,0.04)(0.268037,0.05)(0.29753,0.06)(0.326162,0.07)(0.361765,0.08)(0.387635,0.09)(0.415288,0.1)(0.441611,0.11)(0.464966,0.12)(0.485413,0.13)(0.509661,0.14)(0.529458,0.15)(0.548931,0.16)(0.567224,0.17)(0.585663,0.18)(0.606538,0.19)(0.622153,0.2)(0.639809,0.21)(0.657092,0.22)(0.674893,0.23)(0.691094,0.24)(0.709848,0.25)(0.723726,0.26)(0.742494,0.27)(0.759849,0.28)(0.776713,0.29)(0.792786,0.3)(0.809041,0.31)(0.826186,0.32)(0.843159,0.33)(0.860687,0.34)(0.876359,0.35)(0.891876,0.36)(0.908243,0.37)(0.921541,0.38)(0.935376,0.39)(0.950716,0.4)(0.966973,0.41)(0.982105,0.42)(0.996863,0.43)(1.010958,0.44)(1.031895,0.45)(1.04803,0.46)(1.059607,0.47)(1.074379,0.48)(1.090857,0.49)(1.105972,0.5)(1.12501,0.51)(1.14239,0.52)(1.157615,0.53)(1.171878,0.54)(1.188458,0.55)(1.204927,0.56)(1.222216,0.57)(1.237269,0.58)(1.251976,0.59)(1.271192,0.6)(1.287183,0.61)(1.301946,0.62)(1.319419,0.63)(1.338576,0.64)(1.355493,0.65)(1.377815,0.66)(1.394742,0.67)(1.415819,0.68)(1.433655,0.69)(1.451259,0.7)(1.468387,0.71)(1.486074,0.72)(1.509745,0.73)(1.534089,0.74)(1.555325,0.75)(1.579848,0.76)(1.601953,0.77)(1.626237,0.78)(1.650685,0.79)(1.678268,0.8)(1.704434,0.81)(1.728554,0.82)(1.757243,0.83)(1.782361,0.84)(1.810967,0.85)(1.840125,0.86)(1.869583,0.87)(1.904995,0.88)(1.944813,0.89)(1.984082,0.9)(2.025297,0.91)(2.068876,0.92)(2.11431,0.93)(2.168671,0.94)(2.231536,0.95)(2.303031,0.96)(2.383405,0.97)(2.477321,0.98)(2.621018,0.99)(2.800403,1.)	     
      };
      \addplot[black,smooth] plot coordinates{
(0.,0.)(0.1,0.005472)(0.2,0.021707)(0.3,0.04818)(0.4,0.084042)(0.5,0.128172)(0.6,0.179233)(0.7,0.235735)(0.8,0.296114)(0.9,0.358798)(1.,0.422273)(1.1,0.485146)(1.2,0.546184)(1.3,0.60435)(1.4,0.658826)(1.5,0.709012)(1.6,0.754524)(1.7,0.795178)(1.8,0.830964)(1.9,0.86202)(2.,0.888599)(2.1,0.91104)(2.2,0.929735)(2.3,0.945108)(2.4,0.957585)(2.5,0.967584)(2.6,0.975496)(2.7,0.981679)(2.8,0.986451)(2.9,0.99009)(3.,0.99283)(3.1,0.99487)(3.2,0.996369)(3.3,0.997458)(3.4,0.99824)(3.5,0.998795)(3.6,0.999184)(3.7,0.999453)(3.8,0.999638)(3.9,0.999762)(4.,0.999846)(4.1,0.999901)(4.2,0.999937)(4.3,0.999961)(4.4,0.999976)(4.5,0.999985)(4.6,0.999991)(4.7,0.999995)(4.8,0.999997)(4.9,0.999998)(5.,0.999999)
      };
      \legend{ {Empirical dist.\@ of $T_N(\rho)$},{Distribution of $R_N(\underline\rho)$} }
    \end{axis}
  \end{tikzpicture}
  \end{tabular}
  \caption{Histogram distribution function of the $\sqrt{N}T_N(\rho)$ versus $R_N(\underline\rho)$, $N=100$, $p=N^{-\frac12}[1,\ldots,1]^\trans$, $[C_N]_{ij}=0.7^{|i-j|}$, $c_N=1/2$, $\rho=0.2$.}
  \label{fig:hist_detector_100}
\end{figure}
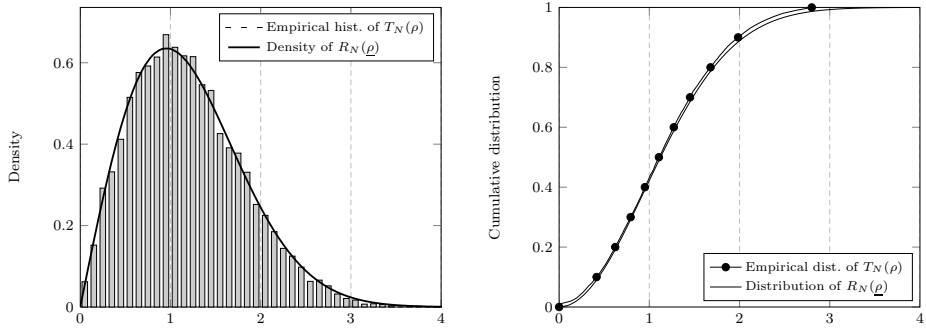

The result of Theorem~\ref{th:T} provides an analytical characterization of the performance of the GLRT for each $\rho$ which suggests in particular the existence of values for $\rho$ which minimize the false alarm probability for given $\gamma$. Note in passing that, independently of $\gamma$, minimizing the false alarm rate is asymptotically equivalent to minimizing $\sigma_N^2(\underline\rho)$ over $\rho$. However, the expression of $\sigma_N^2(\underline\rho)$ depends on the covariance matrix $C_N$ which is unknown to the array and therefore does not allow for an immediate online choice of an appropriate $\underline\rho$. To tackle this problem, the following proposition provides a consistent estimate for $\sigma_N^2(\underline\rho)$ based on $\hat{C}_N(\rho)$ and $p$.

\begin{proposition}[Empirical performance estimation]
	\label{prop:1}
	For $\rho\in(\max\{0,1-c_N^{-1}\},1)$ and $\underline\rho$ defined as above, let $\hat{\sigma}_N^2(\underline\rho)$ be given by
	\begin{align*}
		\hat{\sigma}_N^2(\underline\rho) &\triangleq \frac12 \frac{1-\underline\rho \cdot \frac{p^*\hat{C}_N^{-2}(\rho)p}{p^*\hat{C}_N^{-1}(\rho)p}\cdot \frac1N\tr \hat{C}_N(\rho)}{\left( 1-c + c\underline\rho \frac1N\tr\hat{C}_N^{-1}(\rho)\cdot \frac1N\tr \hat{C}_N(\rho) \right)\left( 1 - \underline\rho \frac1N\tr\hat{C}_N^{-1}(\rho)\cdot \frac1N\tr \hat{C}_N(\rho)\right)}.
	\end{align*}
	Also let $\hat{\sigma}_N^2(1)\triangleq \lim_{\underline\rho\uparrow 1}\hat{\sigma}_N^2(\underline\rho)$. Then we have
	\begin{align*}
		\sup_{\rho \in \mathcal R_\kappa} \left| \sigma_N^2(\underline\rho) - \hat{\sigma}_N^2(\underline\rho) \right| &\asto 0.
	\end{align*}
\end{proposition}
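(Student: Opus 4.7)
The plan is to derive $\sigma_N^2(\underline\rho)$ as the almost sure limit (uniform in $\rho$) of $\hat{\sigma}_N^2(\underline\rho)$ by a three-stage reduction: (i) replace $\hat{C}_N(\rho)$ with the tractable surrogate $\hat{S}_N(\rho)$ inside every statistic appearing in $\hat{\sigma}_N^2$; (ii) rescale $\hat{S}_N(\rho)$ to $\underline{\hat{S}}_N(\underline\rho)$ using the identity from \citep[Lemma~1]{COU14} recalled above; (iii) invoke the classical random matrix deterministic equivalents available for the sample-covariance-plus-shrinkage matrix $\underline{\hat{S}}_N(\underline\rho)$, and finally match the algebraic result to the formula defining $\sigma_N^2(\underline\rho)$.

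For stage (i), Theorem~\ref{th:bilin} with $k=-1$ and $k=-2$ immediately gives $p^*\hat{C}_N^{-k}(\rho)p - p^*\hat{S}_N^{-k}(\rho)p \asto 0$ uniformly on $\mathcal R_\kappa$. For the two trace statistics $\frac1N\tr \hat{C}_N(\rho)$ and $\frac1N\tr \hat{C}_N^{-1}(\rho)$, I would use the stronger spectral-norm convergence \eqref{eq:cv12-eps} together with the uniform boundedness (and boundedness away from zero) of the eigenvalues of both $\hat{C}_N(\rho)$ and $\hat{S}_N(\rho)$ on $\mathcal R_\kappa$ (established in \citep{COU14}) to conclude $\tfrac1N\tr \hat{C}_N^{\pm 1}(\rho) - \tfrac1N\tr \hat{S}_N^{\pm 1}(\rho) \asto 0$ uniformly. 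Therefore $\hat{\sigma}_N^2(\underline\rho)$ is asymptotically equivalent (uniformly in $\rho$) to the same expression with $\hat{S}_N(\rho)$ in place of $\hat{C}_N(\rho)$, and after applying the scaling $\hat{S}_N^{-1}(\rho) = \beta_N(\rho)^{-1}\underline{\hat{S}}_N^{-1}(\underline\rho)$ (with $\beta_N(\rho) \triangleq \rho + \tfrac1{\gamma_N(\rho)}\tfrac{1-\rho}{1-(1-\rho)c}$) the scalar $\beta_N(\rho)$ cancels out from every quotient appearing in $\hat{\sigma}_N^2$, leaving an expression entirely in terms of bilinear and trace statistics of $\underline{\hat{S}}_N(\underline\rho)$.

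For stage (iii), I would apply standard Bai--Silverstein-type deterministic equivalents to $\underline{\hat{S}}_N(\underline\rho)$: uniformly on compact subsets of $\underline\rho\in(0,1]$,
\begin{align*}
p^*\underline{\hat{S}}_N^{-1}(\underline\rho)p - m(-\underline\rho)\, p^*Q_N(\underline\rho)p &\asto 0,\\
\tfrac1N\tr \underline{\hat{S}}_N^{-1}(\underline\rho) - m(-\underline\rho)\cdot \tfrac1N\tr Q_N(\underline\rho) &\asto 0,
\end{align*}
with analogous deterministic equivalents for the second-order resolvent $\underline{\hat{S}}_N^{-2}(\underline\rho)$ (obtained by differentiating in the spectral parameter $-\underline\rho$, which is how $\tfrac1N\tr C_N^2 Q_N^2$ naturally appears) and for $\tfrac1N\tr\underline{\hat{S}}_N(\underline\rho) = (1-\underline\rho)\tfrac1N\tr C_N + \underline\rho + o(1)$. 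Uniformity in $\rho \in \mathcal R_\kappa$ follows from continuity of $\rho\mapsto\underline\rho$ and a standard compactness argument.

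The main obstacle, and the bulk of the remaining work, is the algebraic stage: substituting these deterministic equivalents into $\hat{\sigma}_N^2(\underline\rho)$ and using the master equation
\begin{align*}
m(-\underline\rho)\Bigl(\underline\rho + c\!\int\!\tfrac{(1-\rho)t}{1+(1-\rho)tm(-\underline\rho)}\,\nu(dt)\Bigr) = 1
\end{align*}
to recognize each factor in the numerator and denominator of $\hat{\sigma}_N^2$ as the corresponding factor of $\sigma_N^2(\underline\rho)$. Concretely, the ``$1-\underline\rho\,\frac{p^*\hat{C}_N^{-2}p}{p^*\hat{C}_N^{-1}p}\cdot\tfrac1N\tr \hat{C}_N$'' in the numerator should collapse to a multiple of $p^*C_N Q_N^2 p/p^*Q_N p$, while the two denominator factors should reproduce $\tfrac1N\tr C_NQ_N$ and the crucial ``loop'' correction $1-c(1-\rho)^2 m(-\underline\rho)^2 \tfrac1N\tr C_N^2Q_N^2$. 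The tracking of the $(1-\rho)$, $\underline\rho$, $\gamma_N(\rho)$ and $\beta_N(\rho)$ factors is delicate but mechanical once the fixed-point identity is repeatedly applied. The boundary case $\underline\rho\uparrow 1$ (i.e.\ $\rho=1$, so $\hat{C}_N(1)=I_N$) is handled separately and by continuity, matching the limit value $\hat{\sigma}_N^2(1)$ to $\sigma_N^2(1)$ by direct computation.
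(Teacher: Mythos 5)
Your plan is essentially the same as the paper's: handle $\underline\rho$ near $1$ separately (the expression is $0/0$ there, which the paper resolves by l'H\^opital giving $\tfrac12 p^*(\tfrac1n\sum_i z_iz_i^*)p / \tfrac1N\tr(\tfrac1n\sum_i z_iz_i^*)$), then on $\mathcal R_\kappa'$ transfer to $\underline{\hat S}_N(\underline\rho)$ (the paper uses $\Vert \hat{C}_N(\rho)/(\tfrac1N\tr\hat{C}_N(\rho)) - \underline{\hat S}_N(\underline\rho)\Vert\asto 0$ directly, whereas you pass through $\hat{S}_N(\rho)$ and cancel $\beta_N(\rho)$ — cosmetically equivalent), and then invoke uniform deterministic equivalents including the $z$-derivative to produce $p^*C_NQ_N^2p$ and the $1-c(1-\rho)^2m^2\tfrac1N\tr C_N^2Q_N^2$ loop factor, finally closing with the fixed-point identity for $m$. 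One factual slip you should fix if you carry this out: the deterministic equivalent of $p^*\underline{\hat S}_N^{-1}(\underline\rho)p$ is $\tfrac1{\underline\rho}\,p^*Q_N(\underline\rho)p$, not $m(-\underline\rho)\,p^*Q_N(\underline\rho)p$ (and similarly for the trace); with the prefactor as you wrote it the algebra would not close to the stated $\sigma_N^2(\underline\rho)$.
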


Since both the estimation of $\sigma_N^2(\underline\rho)$ in Proposition~\ref{prop:1} and the convergence in Theorem~\ref{th:T} are uniform over $\rho\in\mathcal R_\kappa$, we have the following result.
\begin{corollary}[Empirical performance optimum]
	\label{co:1}
	Let $\hat{\sigma}_N^2(\underline\rho)$ be defined as in Proposition~\ref{prop:1} and define $\hat{\rho}_N^*$ as any value satisfying
	\begin{align*}
		\hat{\rho}_N^* &\in \argmin_{ \rho\in \mathcal R_\kappa } \left\{ \hat{\sigma}_N^2(\underline\rho) \right\}
	\end{align*}
	(this set being in general a singleton). Then, for every $\gamma>0$,
	\begin{align*}
		P\left( \sqrt{N}T_N(\hat{\rho}_N^*) > \gamma \right) - \inf_{\rho\in \mathcal R_\kappa} \left\{ P\left( \sqrt{N}T_N(\rho) > \gamma \right) \right\} &\to 0.
	\end{align*}
\end{corollary}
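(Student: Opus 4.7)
The plan is to combine Theorem~\ref{th:T} and Proposition~\ref{prop:1} through a uniform--convergence/substitution argument, the only real subtlety being that $\hat{\rho}_N^*$ is a data-dependent optimizer.

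\emph{Step 1 (from $\hat{\sigma}_N^2$ back to $\sigma_N^2$ at the argmin).} First I would check that on $\mathcal R_\kappa$ the quantity $\sigma_N^2(\underline\rho)$ is, uniformly in $N$, both bounded above and bounded below away from zero. This follows from the closed-form expression in Theorem~\ref{th:T}, together with the continuous increasing bijection $\rho\mapsto\underline\rho$ mapping $\mathcal R_\kappa$ onto a compact subset of $(0,1]$ and the regularity of $m(-\underline\rho)$ and $Q_N(\underline\rho)$ on that set. On a compact interval bounded away from $0$, the map $s\mapsto\exp(-\gamma^2/(2s))$ is Lipschitz, so Proposition~\ref{prop:1} upgrades to
\begin{align*}
	\sup_{\rho\in\mathcal R_\kappa}\left|\exp\!\left(-\frac{\gamma^2}{2\sigma_N^2(\underline\rho)}\right)-\exp\!\left(-\frac{\gamma^2}{2\hat{\sigma}_N^2(\underline\rho)}\right)\right|\asto 0.
\end{align*}
Since $s\mapsto\exp(-\gamma^2/(2s))$ is strictly increasing, $\hat{\rho}_N^*$ also minimizes $\rho\mapsto\exp(-\gamma^2/(2\hat{\sigma}_N^2(\underline\rho)))$, and the elementary inequality $|h(\hat\rho_N^*)-\inf h|\le 2\sup|h-\hat h|$ applied with $h(\rho)=\exp(-\gamma^2/(2\sigma_N^2(\underline\rho)))$ and $\hat h(\rho)=\exp(-\gamma^2/(2\hat{\sigma}_N^2(\underline\rho)))$ yields
\begin{align*}
	\left|\exp\!\left(-\frac{\gamma^2}{2\sigma_N^2(\underline{\hat\rho_N^*})}\right)-\inf_{\rho\in\mathcal R_\kappa}\exp\!\left(-\frac{\gamma^2}{2\sigma_N^2(\underline\rho)}\right)\right|\asto 0.
\end{align*}

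\emph{Step 2 (substitution into $T_N$ at a random $\rho$).} This is the main obstacle. Theorem~\ref{th:T} concerns the \emph{deterministic} function $\rho\mapsto P(\sqrt{N}T_N(\rho)>\gamma)$; substituting $\rho=\hat\rho_N^*$ is not legal from that statement alone because $\hat\rho_N^*$ and $\hat{C}_N(\hat\rho_N^*)$ (which enters $T_N$) are correlated through the common data $x_1,\ldots,x_n$. I would get around this by using the stronger conditional form that the proof of Theorem~\ref{th:T} actually establishes, namely that almost surely
\begin{align*}
	\sup_{\rho\in\mathcal R_\kappa}\left|P\!\left(\sqrt{N}T_N(\rho)>\gamma\,\big|\,x_1,\ldots,x_n\right)-\exp\!\left(-\frac{\gamma^2}{2\sigma_N^2(\underline\rho)}\right)\right|\longrightarrow 0,
\end{align*}
from which Theorem~\ref{th:T} itself follows by bounded convergence. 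Since $\hat\rho_N^*$ is $\sigma(x_1,\ldots,x_n)$-measurable, substituting $\rho=\hat\rho_N^*$ in this conditional estimate and integrating (exploiting the uniform bound by $1$) gives
\begin{align*}
	P\!\left(\sqrt{N}T_N(\hat\rho_N^*)>\gamma\right)-\E\!\left[\exp\!\left(-\frac{\gamma^2}{2\sigma_N^2(\underline{\hat\rho_N^*})}\right)\right]\longrightarrow 0,
\end{align*}
and Step~1, again with bounded convergence, lets me replace the expectation by $\inf_{\rho\in\mathcal R_\kappa}\exp(-\gamma^2/(2\sigma_N^2(\underline\rho)))+o(1)$.

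\emph{Step 3 (conclusion).} Passing to the infimum in the uniform estimate of Theorem~\ref{th:T} directly gives
\begin{align*}
	\inf_{\rho\in\mathcal R_\kappa}P\!\left(\sqrt{N}T_N(\rho)>\gamma\right)-\inf_{\rho\in\mathcal R_\kappa}\exp\!\left(-\frac{\gamma^2}{2\sigma_N^2(\underline\rho)}\right)\longrightarrow 0,
\end{align*}
and subtracting this from the identity of Step~2 closes the proof. The only nontrivial ingredient is the conditional version of Theorem~\ref{th:T} invoked in Step~2; once that is granted, the rest is routine uniform-continuity bookkeeping on the compact parameter set $\mathcal R_\kappa$.
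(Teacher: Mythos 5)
You correctly flag in Step~2 a genuine subtlety that the paper itself elides: if $P(\sqrt{N}T_N(\hat\rho_N^*)>\gamma)$ is read literally as the unconditional probability, one cannot simply substitute the data-dependent $\hat\rho_N^*$ into Theorem~\ref{th:T}, because $\hat\rho_N^*$ and $\hat{C}_N(\hat\rho_N^*)$ are built from the same observations. The paper's own proof sidesteps this by implicitly reading $P(\sqrt{N}T_N(\hat\rho_N^*)>\gamma)$ as the deterministic function $g(\rho)\triangleq P(\sqrt{N}T_N(\rho)>\gamma)$ evaluated at the random point $\hat\rho_N^*$: since Theorem~\ref{th:T} gives $\sup_\rho|g(\rho)-\exp(-\gamma^2/(2\sigma_N^2(\underline\rho)))|\to0$ as a deterministic statement, plugging in any random $\rho$ is licit; the corollary then follows from a short sandwich using the three resulting approximation relations of Theorem~\ref{th:T} and Proposition~\ref{prop:1} together with the two obvious ordering inequalities
\begin{align*}
\exp\!\left(-\frac{\gamma^2}{2\sigma_N^{*2}}\right)\le\min\!\left\{\exp\!\left(-\frac{\gamma^2}{2\sigma_N^2(\underline{\hat\rho}_N^*)}\right),\exp\!\left(-\frac{\gamma^2}{2\sigma_N^2(\underline\rho_N^*)}\right)\right\},\quad g(\rho_N^*)\le g(\hat\rho_N^*),
\end{align*}
with $\rho_N^*$ the argmin of $g$ and $\sigma_N^{*2}$ the min of $\sigma_N^2(\underline\rho)$.

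Your Steps~1 and~3 are fine, and your Step~2 is the right fix if one insists on the literal reading. However, the conditional estimate you invoke there — $\sup_\rho|P(\sqrt{N}T_N(\rho)>\gamma\,|\,x_1,\ldots,x_n)-\exp(-\gamma^2/(2\sigma_N^2(\underline\rho)))|\asto0$ — is not something the proof of Theorem~\ref{th:T} delivers. That proof conditions on $y$ (via \cite[Lemma~5.3]{CHA12}) to get the CLT for $A^*\underline{\hat S}_N^{-1}A$, and then averages over $y$; it is organized in exactly the opposite direction to what you need. The conditional-on-$x$ version is quite plausible (conditionally on $z_1,\ldots,z_n$, $y$ is near-Gaussian and the relevant resolvent quantities concentrate a.s.\@ to the same deterministic equivalents), but proving it uniformly in $\rho$ is a separate, nontrivial step that would have to be supplied. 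Under the plug-in reading the paper adopts, Step~2 collapses to a one-line substitution and the rest is bookkeeping.
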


This last result states that, for $N,n$ sufficiently large, it is increasingly close-to-optimal to use the detector $T_N(\hat{\rho}_N^*)$ in order to reach minimal false alarm probability. A practical graphical confirmation of this fact is provided in Figure~\ref{fig:FAR} where, in the same scenario as in Figures~\ref{fig:hist_detector_20}--\ref{fig:hist_detector_100}, the false alarm rates for various values of $\gamma$ are depicted. In this figure, the black dots correspond to the actual values taken by $P(\sqrt{N}T_N(\rho)>\gamma)$ empirically obtained out of $10^6$ Monte Carlo simulations. The plain curves are the approximating values $\exp(-\gamma^2/(2\sigma_N(\underline\rho)^2))$. Finally, the white dots with error bars correspond to the mean and standard deviations of $\exp(-\gamma^2/(2\hat\sigma_N(\underline\rho)^2))$ for each $\underline\rho$, respectively.
 It is first interesting to note that the estimates $\hat\sigma_N(\underline\rho)$ are quite accurate, especially so for $N$ large, with standard deviations sufficiently small to provide good estimates, already for small $N$, of the false alarm minimizing $\rho$. However, similar to Figures~\ref{fig:hist_detector_20}--\ref{fig:hist_detector_100}, we observe a particularly weak approximation in the (small) $N=20$ setting for large values of $\gamma$, corresponding to tail events, while for $N=100$, these values are better recovered. This behavior is obviously explained by the fact that $\gamma=3$ is not small compared to $\sqrt{N}$ when $N=20$.

 Nonetheless, from an error rate viewpoint, it is observed that errors of order $10^{-2}$ are rather well approximated for $N=100$. In Figure~\ref{fig:FAR2}, we consider this observation in depth by displaying $P(T_N(\hat{\rho}_N^*)>\Gamma)$ and its approximation $\exp(-N\Gamma^2/(2\hat\sigma_N^2(\underline\rho)))$ for $N=20$ and $N=100$, for various values of $\Gamma$. This figures shows that even errors of order $10^{-4}$ are well approximated for large $N$, while only errors of order $10^{-2}$ can be evaluated for small $N$.\footnote{Note that a comparison against alternative algorithms that would use no shrinkage (i.e., by setting $\rho=0$) or that would not implement a robust estimate is not provided here, being of little relevance. Indeed, a proper selection of $c_N$ to a large value or $C_N$ with condition number close to one would provide an arbitrarily large gain of shrinkage-based methods, while an arbitrarily heavy-tailed choice of the $\tau_i$ distribution would provide a huge performance gain for robust methods. It is therefore not possible to compare such methods on fair grounds.}

\begin{figure}[h!]
  \centering
  \begin{tabular}{cc}
  \begin{tikzpicture}[font=\footnotesize,scale=.7]
    \renewcommand{\axisdefaulttryminticks}{4} 
    \tikzstyle{every major grid}+=[style=densely dashed]       
    \tikzstyle{every axis y label}+=[yshift=-10pt] 
    \tikzstyle{every axis x label}+=[yshift=5pt]
    \tikzstyle{every axis legend}+=[cells={anchor=west},fill=white,
        at={(0.98,0.02)}, anchor=south east, font=\scriptsize ]
    \begin{semilogyaxis}[
      xmin=0,
      xmax=1,
      ymax=1,
      grid=major,
      ymajorgrids=false,
      scaled ticks=true,
      xlabel={$\rho$},
      ylabel={$P(\sqrt{N}T_N(\rho)>\gamma)$}
      ]
      \addplot[black] plot coordinates{
	      (0.010000,0.132470)(0.020000,0.129832)(0.030000,0.127406)(0.040000,0.125179)(0.050000,0.123139)(0.060000,0.121277)(0.070000,0.119582)(0.080000,0.118045)(0.090000,0.116660)(0.100000,0.115417)(0.110000,0.114311)(0.120000,0.113336)(0.130000,0.112486)(0.140000,0.111756)(0.150000,0.111141)(0.160000,0.110639)(0.170000,0.110244)(0.180000,0.109954)(0.190000,0.109766)(0.200000,0.109677)(0.210000,0.109685)(0.220000,0.109788)(0.230000,0.109983)(0.240000,0.110270)(0.250000,0.110648)(0.260000,0.111114)(0.270000,0.111669)(0.280000,0.112311)(0.290000,0.113040)(0.300000,0.113856)(0.310000,0.114758)(0.320000,0.115747)(0.330000,0.116822)(0.340000,0.117984)(0.350000,0.119233)(0.360000,0.120570)(0.370000,0.121996)(0.380000,0.123511)(0.390000,0.125116)(0.400000,0.126811)(0.410000,0.128599)(0.420000,0.130480)(0.430000,0.132456)(0.440000,0.134528)(0.450000,0.136696)(0.460000,0.138964)(0.470000,0.141332)(0.480000,0.143802)(0.490000,0.146376)(0.500000,0.149055)(0.510000,0.151841)(0.520000,0.154736)(0.530000,0.157743)(0.540000,0.160862)(0.550000,0.164095)(0.560000,0.167446)(0.570000,0.170915)(0.580000,0.174504)(0.590000,0.178216)(0.600000,0.182052)(0.610000,0.186014)(0.620000,0.190104)(0.630000,0.194324)(0.640000,0.198675)(0.650000,0.203159)(0.660000,0.207776)(0.670000,0.212530)(0.680000,0.217420)(0.690000,0.222448)(0.700000,0.227615)(0.710000,0.232922)(0.720000,0.238368)(0.730000,0.243955)(0.740000,0.249682)(0.750000,0.255549)(0.760000,0.261556)(0.770000,0.267703)(0.780000,0.273987)(0.790000,0.280408)(0.800000,0.286965)(0.810000,0.293655)(0.820000,0.300476)(0.830000,0.307425)(0.840000,0.314500)(0.850000,0.321698)(0.860000,0.329014)(0.870000,0.336445)(0.880000,0.343987)(0.890000,0.351635)(0.900000,0.359384)(0.910000,0.367230)(0.920000,0.375167)(0.930000,0.383190)(0.940000,0.391293)(0.950000,0.399471)(0.960000,0.407718)(0.970000,0.416027)(0.980000,0.424394)(0.990000,0.432813)(1.000000,0.441279)
      };
      \addplot[black,only marks,mark=o,mark options={scale=0.75},error bars/.cd,y dir=both,y explicit, error bar style={mark size=1.5pt}] plot coordinates{
	      (0.050000,0.122858)+-(0.004658,0.004658)(0.100000,0.115290)+-(0.007070,0.007070)(0.150000,0.111293)+-(0.008363,0.008363)(0.200000,0.110090)+-(0.009027,0.009027)(0.250000,0.111235)+-(0.009386,0.009386)(0.300000,0.114445)+-(0.009656,0.009656)(0.350000,0.119667)+-(0.009976,0.009976)(0.400000,0.126863)+-(0.010424,0.010424)(0.450000,0.136131)+-(0.011170,0.011170)(0.500000,0.147467)+-(0.012326,0.012326)(0.550000,0.161167)+-(0.013813,0.013813)(0.600000,0.177344)+-(0.015758,0.015758)(0.650000,0.196270)+-(0.018054,0.018054)(0.700000,0.218062)+-(0.020671,0.020671)(0.750000,0.242979)+-(0.023689,0.023689)(0.800000,0.270952)+-(0.026959,0.026959)(0.850000,0.301740)+-(0.030677,0.030677)(0.900000,0.334893)+-(0.034650,0.034650)(0.950000,0.369402)+-(0.038847,0.038847)	      
      };
      \addplot[black,only marks,mark=*,mark options={scale=0.75}] plot coordinates{
	      (0.050000,0.102554)(0.100000,0.095524)(0.150000,0.092681)(0.200000,0.091376) 
	      (0.250000,0.093394)(0.300000,0.097212)(0.350000,0.103109)(0.400000,0.111273)(0.450000,0.121610)(0.500000,0.134944)(0.550000,0.152715)(0.600000,0.172006)(0.650000,0.197016)(0.700000,0.226062)(0.750000,0.258595)(0.800000,0.296503)(0.850000,0.339433)(0.900000,0.384739)(0.950000,0.432905)(1.000000,0.478739)	      
      };
      \addplot[black] plot coordinates{
	      (0.010000,0.010587)(0.020000,0.010118)(0.030000,0.009698)(0.040000,0.009321)(0.050000,0.008982)(0.060000,0.008680)(0.070000,0.008409)(0.080000,0.008168)(0.090000,0.007954)(0.100000,0.007764)(0.110000,0.007598)(0.120000,0.007453)(0.130000,0.007328)(0.140000,0.007221)(0.150000,0.007132)(0.160000,0.007060)(0.170000,0.007003)(0.180000,0.006962)(0.190000,0.006935)(0.200000,0.006922)(0.210000,0.006924)(0.220000,0.006938)(0.230000,0.006966)(0.240000,0.007007)(0.250000,0.007061)(0.260000,0.007128)(0.270000,0.007208)(0.280000,0.007302)(0.290000,0.007409)(0.300000,0.007530)(0.310000,0.007665)(0.320000,0.007814)(0.330000,0.007979)(0.340000,0.008158)(0.350000,0.008354)(0.360000,0.008566)(0.370000,0.008796)(0.380000,0.009043)(0.390000,0.009310)(0.400000,0.009596)(0.410000,0.009903)(0.420000,0.010232)(0.430000,0.010584)(0.440000,0.010960)(0.450000,0.011362)(0.460000,0.011790)(0.470000,0.012247)(0.480000,0.012734)(0.490000,0.013253)(0.500000,0.013805)(0.510000,0.014392)(0.520000,0.015017)(0.530000,0.015681)(0.540000,0.016388)(0.550000,0.017138)(0.560000,0.017936)(0.570000,0.018783)(0.580000,0.019682)(0.590000,0.020636)(0.600000,0.021649)(0.610000,0.022724)(0.620000,0.023863)(0.630000,0.025072)(0.640000,0.026352)(0.650000,0.027710)(0.660000,0.029147)(0.670000,0.030669)(0.680000,0.032279)(0.690000,0.033983)(0.700000,0.035785)(0.710000,0.037690)(0.720000,0.039702)(0.730000,0.041826)(0.740000,0.044068)(0.750000,0.046432)(0.760000,0.048924)(0.770000,0.051549)(0.780000,0.054312)(0.790000,0.057218)(0.800000,0.060272)(0.810000,0.063479)(0.820000,0.066845)(0.830000,0.070374)(0.840000,0.074071)(0.850000,0.077939)(0.860000,0.081985)(0.870000,0.086210)(0.880000,0.090619)(0.890000,0.095215)(0.900000,0.100002)(0.910000,0.104981)(0.920000,0.110156)(0.930000,0.115527)(0.940000,0.121096)(0.950000,0.126865)(0.960000,0.132834)(0.970000,0.139003)(0.980000,0.145372)(0.990000,0.151942)(1.000000,0.158710)
      };
      \addplot[black,only marks,mark=o,mark options={scale=0.75},error bars/.cd,y dir=both,y explicit, error bar style={mark size=1.5pt}] plot coordinates{
	      (0.050000,0.008954)+-(0.000743,0.000743)(0.100000,0.007786)+-(0.001034,0.001034)(0.150000,0.007211)+-(0.001170,0.001170)(0.200000,0.007047)+-(0.001250,0.001250)(0.250000,0.007217)+-(0.001323,0.001323)(0.300000,0.007694)+-(0.001418,0.001418)(0.350000,0.008505)+-(0.001558,0.001558)(0.400000,0.009696)+-(0.001762,0.001762)(0.450000,0.011363)+-(0.002074,0.002074)(0.500000,0.013608)+-(0.002542,0.002542)(0.550000,0.016628)+-(0.003196,0.003196)(0.600000,0.020636)+-(0.004120,0.004120)(0.650000,0.025945)+-(0.005360,0.005360)(0.700000,0.032905)+-(0.006995,0.006995)(0.750000,0.042004)+-(0.009156,0.009156)(0.800000,0.053704)+-(0.011903,0.011903)(0.850000,0.068460)+-(0.015448,0.015448)(0.900000,0.086600)+-(0.019812,0.019812)(0.950000,0.108034)+-(0.025015,0.025015)
      };
      \addplot[black,only marks,mark=*,mark options={scale=0.75}] plot coordinates{
	      (0.050000,0.001664)(0.100000,0.001371)(0.150000,0.001260)(0.200000,0.001244)(0.250000,0.001266)(0.300000,0.001392)(0.350000,0.001612)(0.400000,0.001974)(0.450000,0.002550)(0.500000,0.003313)(0.550000,0.004514)(0.600000,0.006323)(0.650000,0.009090)(0.700000,0.013042)(0.750000,0.019099)(0.800000,0.027746)(0.850000,0.040506)(0.900000,0.057496)(0.950000,0.080889)(1.000000,0.108458)	      
      };
      \node at (axis cs:0.4,0.3) {$\gamma=2$};
      \draw (axis cs:0.4,0.12) ellipse [black,x radius=2,y radius=0.5];
      \node at (axis cs:0.4,0.03) {$\gamma=3$};
      \draw (axis cs:0.4,0.005) ellipse [black,x radius=2,y radius=1.5];
      \legend{ {Limiting theory},{Empirical estimator},{Detector} }
    \end{semilogyaxis}
  \end{tikzpicture}
  &
  \begin{tikzpicture}[font=\footnotesize,scale=.7]
    \renewcommand{\axisdefaulttryminticks}{4} 
    \tikzstyle{every major grid}+=[style=densely dashed]       
    \tikzstyle{every axis y label}+=[yshift=-10pt] 
    \tikzstyle{every axis x label}+=[yshift=5pt]
    \tikzstyle{every axis legend}+=[cells={anchor=west},fill=white,
        at={(0.98,0.02)}, anchor=south east, font=\scriptsize ]
    \begin{semilogyaxis}[
      xmin=0,
      xmax=1,
      ymax=1,
      grid=major,
      ymajorgrids=false,
      scaled ticks=true,
      xlabel={$\rho$},
      ylabel={$P(\sqrt{N}T_N(\rho)>\gamma)$}
      ]
      \addplot[black] plot coordinates{
	      (0.010000,0.132556)(0.020000,0.130002)(0.030000,0.127659)(0.040000,0.125515)(0.050000,0.123558)(0.060000,0.121777)(0.070000,0.120163)(0.080000,0.118708)(0.090000,0.117404)(0.100000,0.116244)(0.110000,0.115221)(0.120000,0.114330)(0.130000,0.113565)(0.140000,0.112922)(0.150000,0.112395)(0.160000,0.111983)(0.170000,0.111680)(0.180000,0.111483)(0.190000,0.111391)(0.200000,0.111401)(0.210000,0.111510)(0.220000,0.111717)(0.230000,0.112020)(0.240000,0.112418)(0.250000,0.112910)(0.260000,0.113495)(0.270000,0.114171)(0.280000,0.114940)(0.290000,0.115800)(0.300000,0.116752)(0.310000,0.117795)(0.320000,0.118930)(0.330000,0.120157)(0.340000,0.121477)(0.350000,0.122890)(0.360000,0.124398)(0.370000,0.126000)(0.380000,0.127699)(0.390000,0.129495)(0.400000,0.131391)(0.410000,0.133386)(0.420000,0.135483)(0.430000,0.137683)(0.440000,0.139989)(0.450000,0.142401)(0.460000,0.144923)(0.470000,0.147555)(0.480000,0.150300)(0.490000,0.153159)(0.500000,0.156136)(0.510000,0.159233)(0.520000,0.162451)(0.530000,0.165793)(0.540000,0.169262)(0.550000,0.172859)(0.560000,0.176587)(0.570000,0.180449)(0.580000,0.184447)(0.590000,0.188584)(0.600000,0.192861)(0.610000,0.197281)(0.620000,0.201846)(0.630000,0.206559)(0.640000,0.211421)(0.650000,0.216435)(0.660000,0.221602)(0.670000,0.226924)(0.680000,0.232402)(0.690000,0.238039)(0.700000,0.243834)(0.710000,0.249789)(0.720000,0.255905)(0.730000,0.262182)(0.740000,0.268620)(0.750000,0.275218)(0.760000,0.281977)(0.770000,0.288895)(0.780000,0.295972)(0.790000,0.303204)(0.800000,0.310590)(0.810000,0.318129)(0.820000,0.325815)(0.830000,0.333647)(0.840000,0.341621)(0.850000,0.349732)(0.860000,0.357975)(0.870000,0.366345)(0.880000,0.374838)(0.890000,0.383447)(0.900000,0.392165)(0.910000,0.400988)(0.920000,0.409907)(0.930000,0.418917)(0.940000,0.428010)(0.950000,0.437180)(0.960000,0.446419)(0.970000,0.455720)(0.980000,0.465078)(0.990000,0.474484)(1.000000,0.483934)
      };
      \addplot[black,only marks,mark=o,mark options={scale=0.75},error bars/.cd,y dir=both,y explicit, error bar style={mark size=1.5pt}] plot coordinates{
	      (0.050000,0.123520)+-(0.001961,0.001961)(0.100000,0.116258)+-(0.003004,0.003004)(0.150000,0.112500)+-(0.003619,0.003619)(0.200000,0.111525)+-(0.003995,0.003995)(0.250000,0.113046)+-(0.004201,0.004201)(0.300000,0.116926)+-(0.004334,0.004334)(0.350000,0.122988)+-(0.004459,0.004459)(0.400000,0.131404)+-(0.004595,0.004595)(0.450000,0.142259)+-(0.004695,0.004695)(0.500000,0.155536)+-(0.005049,0.005049)(0.550000,0.171997)+-(0.005367,0.005367)(0.600000,0.191571)+-(0.005818,0.005818)(0.650000,0.214513)+-(0.006541,0.006541)(0.700000,0.241244)+-(0.007450,0.007450)(0.750000,0.271917)+-(0.008836,0.008836)(0.800000,0.306142)+-(0.010523,0.010523)(0.850000,0.344681)+-(0.012411,0.012411)(0.900000,0.386030)+-(0.015349,0.015349)(0.950000,0.429722)+-(0.018687,0.018687)
      };
      \addplot[black,only marks,mark=*,mark options={scale=0.75}] plot coordinates{
	      (0.050000,0.123990)(0.100000,0.115930)(0.150000,0.111060)(0.200000,0.109690)(0.250000,0.111230)(0.300000,0.114650)(0.350000,0.120460)(0.400000,0.129600)(0.450000,0.141420)(0.500000,0.154690)(0.550000,0.174360)(0.600000,0.194010)(0.650000,0.218770)(0.700000,0.246950)(0.750000,0.279720)(0.800000,0.318310)(0.850000,0.355070)(0.900000,0.398220)(0.950000,0.447030)(1.000000,0.492040)
      };
      \addplot[black] plot coordinates{
	      (0.010000,0.010602)(0.020000,0.010148)(0.030000,0.009741)(0.040000,0.009377)(0.050000,0.009051)(0.060000,0.008760)(0.070000,0.008501)(0.080000,0.008271)(0.090000,0.008068)(0.100000,0.007890)(0.110000,0.007735)(0.120000,0.007601)(0.130000,0.007487)(0.140000,0.007392)(0.150000,0.007314)(0.160000,0.007254)(0.170000,0.007210)(0.180000,0.007182)(0.190000,0.007168)(0.200000,0.007170)(0.210000,0.007186)(0.220000,0.007216)(0.230000,0.007260)(0.240000,0.007318)(0.250000,0.007390)(0.260000,0.007476)(0.270000,0.007577)(0.280000,0.007692)(0.290000,0.007823)(0.300000,0.007968)(0.310000,0.008129)(0.320000,0.008306)(0.330000,0.008500)(0.340000,0.008712)(0.350000,0.008942)(0.360000,0.009190)(0.370000,0.009459)(0.380000,0.009748)(0.390000,0.010059)(0.400000,0.010394)(0.410000,0.010752)(0.420000,0.011136)(0.430000,0.011547)(0.440000,0.011987)(0.450000,0.012457)(0.460000,0.012959)(0.470000,0.013494)(0.480000,0.014065)(0.490000,0.014675)(0.500000,0.015324)(0.510000,0.016017)(0.520000,0.016754)(0.530000,0.017540)(0.540000,0.018376)(0.550000,0.019267)(0.560000,0.020214)(0.570000,0.021223)(0.580000,0.022295)(0.590000,0.023436)(0.600000,0.024649)(0.610000,0.025938)(0.620000,0.027308)(0.630000,0.028764)(0.640000,0.030310)(0.650000,0.031951)(0.660000,0.033693)(0.670000,0.035541)(0.680000,0.037501)(0.690000,0.039578)(0.700000,0.041779)(0.710000,0.044110)(0.720000,0.046578)(0.730000,0.049188)(0.740000,0.051947)(0.750000,0.054862)(0.760000,0.057940)(0.770000,0.061188)(0.780000,0.064612)(0.790000,0.068219)(0.800000,0.072015)(0.810000,0.076007)(0.820000,0.080202)(0.830000,0.084605)(0.840000,0.089223)(0.850000,0.094060)(0.860000,0.099121)(0.870000,0.104413)(0.880000,0.109938)(0.890000,0.115701)(0.900000,0.121704)(0.910000,0.127951)(0.920000,0.134445)(0.930000,0.141185)(0.940000,0.148174)(0.950000,0.155412)(0.960000,0.162900)(0.970000,0.170636)(0.980000,0.178621)(0.990000,0.186853)(1.000000,0.195330)
      };
      \addplot[black,only marks,mark=o,mark options={scale=0.75},error bars/.cd,y dir=both,y explicit, error bar style={mark size=1.5pt}] plot coordinates{
	      (0.050000,0.009048)+-(0.000322,0.000322)(0.100000,0.007900)+-(0.000456,0.000456)(0.150000,0.007341)+-(0.000528,0.000528)(0.200000,0.007201)+-(0.000576,0.000576)(0.250000,0.007425)+-(0.000617,0.000617)(0.300000,0.008010)+-(0.000663,0.000663)(0.350000,0.008974)+-(0.000728,0.000728)(0.400000,0.010414)+-(0.000816,0.000816)(0.450000,0.012448)+-(0.000921,0.000921)(0.500000,0.015215)+-(0.001107,0.001107)(0.550000,0.019077)+-(0.001335,0.001335)(0.600000,0.024311)+-(0.001655,0.001655)(0.650000,0.031357)+-(0.002145,0.002145)(0.700000,0.040842)+-(0.002831,0.002831)(0.750000,0.053472)+-(0.003899,0.003899)(0.800000,0.069831)+-(0.005378,0.005378)(0.850000,0.091197)+-(0.007359,0.007359)(0.900000,0.117723)+-(0.010475,0.010475)(0.950000,0.149908)+-(0.014589,0.014589)
      };
      \addplot[black,only marks,mark=*,mark options={scale=0.75}] plot coordinates{
	      (0.050000,0.008520)(0.100000,0.007550)(0.150000,0.006520)(0.200000,0.006560)(0.250000,0.006420)(0.300000,0.006850)(0.350000,0.008040)(0.400000,0.009110)(0.450000,0.011040)(0.500000,0.014580)(0.550000,0.017780)(0.600000,0.023820)(0.650000,0.030230)(0.700000,0.041990)(0.750000,0.054280)(0.800000,0.071530)(0.850000,0.092110)(0.900000,0.121480)(0.950000,0.157840)(1.000000,0.197780)
      };
      \node at (axis cs:0.4,0.3) {$\gamma=2$};
      \draw (axis cs:0.4,0.12) ellipse [black,x radius=2,y radius=0.5];
      \node at (axis cs:0.4,0.025) {$\gamma=3$};
      \draw (axis cs:0.4,0.01) ellipse [black,x radius=2,y radius=0.5];
      \legend{ {Limiting theory},{Empirical estimator},{Detector} }
    \end{semilogyaxis}
  \end{tikzpicture}
  \end{tabular}
  \caption{False alarm rate $P(\sqrt{N}T_N(\rho)>\gamma)$, $N=20$ (left), $N=100$ (right), $p=N^{-\frac12}[1,\ldots,1]^\trans$, $[C_N]_{ij}=0.7^{|i-j|}$, $c_N=1/2$.}
  \label{fig:FAR}
\end{figure}
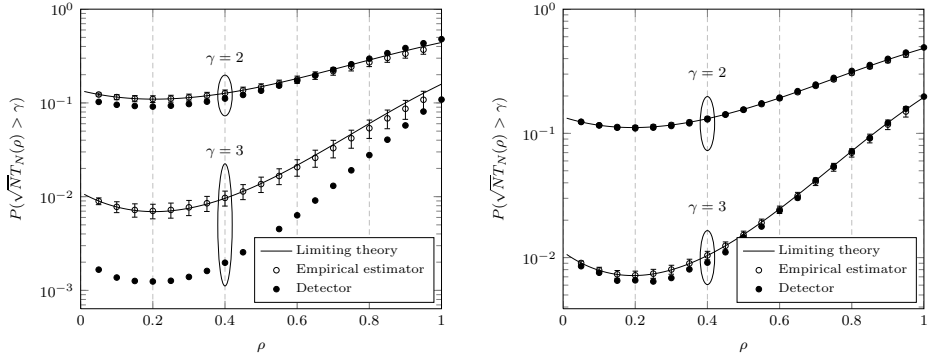

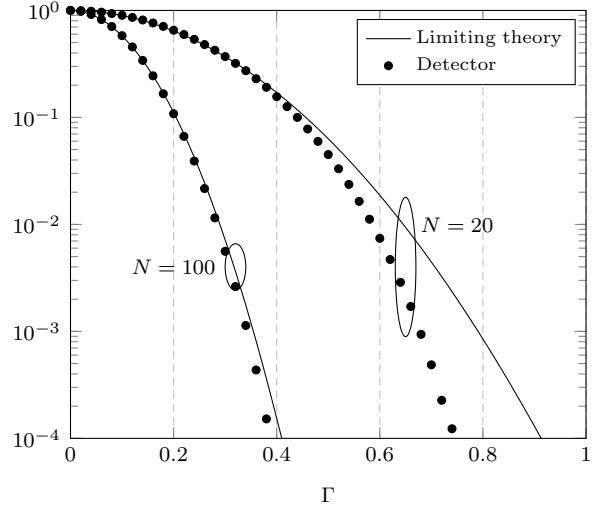
\begin{figure}[h!]
  \centering
  \begin{tikzpicture}[font=\footnotesize]
    \renewcommand{\axisdefaulttryminticks}{4} 
    \tikzstyle{every major grid}+=[style=densely dashed]       
    \tikzstyle{every axis y label}+=[yshift=-10pt] 
    \tikzstyle{every axis x label}+=[yshift=5pt]
    \tikzstyle{every axis legend}+=[cells={anchor=west},fill=white,
        at={(0.98,0.98)}, anchor=north east, font=\scriptsize ]
    \begin{semilogyaxis}[
      xmin=0,
      ymin=1e-4,
      xmax=1,
      ymax=1,
      grid=major,
      ymajorgrids=false,
      scaled ticks=true,
      xlabel={$\Gamma$},
      mark repeat=2;
      ylabel={$P(T_N(\rho)>\Gamma)$}
      ]
      \addplot[black] plot coordinates{
	      (0.,1.)(0.01,0.998896)(0.02,0.995589)(0.03,0.990103)(0.04,0.982474)(0.05,0.97275)(0.06,0.960997)(0.07,0.94729)(0.08,0.931716)(0.09,0.914376)(0.1,0.895377)(0.11,0.874837)(0.12,0.852881)(0.13,0.82964)(0.14,0.805251)(0.15,0.779853)(0.16,0.753589)(0.17,0.726602)(0.18,0.699035)(0.19,0.671028)(0.2,0.642722)(0.21,0.614251)(0.22,0.585744)(0.23,0.557328)(0.24,0.529119)(0.25,0.501229)(0.26,0.473761)(0.27,0.446809)(0.28,0.420461)(0.29,0.394792)(0.3,0.369873)(0.31,0.345761)(0.32,0.322507)(0.33,0.300153)(0.34,0.278732)(0.35,0.258268)(0.36,0.238778)(0.37,0.220272)(0.38,0.202751)(0.39,0.186212)(0.4,0.170645)(0.41,0.156033)(0.42,0.142358)(0.43,0.129595)(0.44,0.117715)(0.45,0.106688)(0.46,0.096481)(0.47,0.087058)(0.48,0.078381)(0.49,0.070414)(0.5,0.063117)(0.51,0.056451)(0.52,0.050377)(0.53,0.044858)(0.54,0.039856)(0.55,0.035333)(0.56,0.031254)(0.57,0.027585)(0.58,0.024293)(0.59,0.021346)(0.6,0.018716)(0.61,0.016373)(0.62,0.014292)(0.63,0.012448)(0.64,0.010818)(0.65,0.009381)(0.66,0.008117)(0.67,0.007007)(0.68,0.006036)(0.69,0.005188)(0.7,0.004449)(0.71,0.003807)(0.72,0.003251)(0.73,0.002769)(0.74,0.002354)(0.75,0.001997)(0.76,0.00169)(0.77,0.001427)(0.78,0.001202)(0.79,0.001011)(0.8,0.000848)(0.81,0.00071)(0.82,0.000593)(0.83,0.000494)(0.84,0.000411)(0.85,0.000341)(0.86,0.000282)(0.87,0.000233)(0.88,0.000192)(0.89,0.000158)(0.9,0.00013)(0.91,0.000106)(0.92,0.000087)(0.93,0.000071)(0.94,0.000057)(0.95,0.000047)(0.96,0.000038)(0.97,0.00003)(0.98,0.000025)(0.99,0.00002)(1.,0.000016)
      };
      \addplot[black,only marks,mark=*,mark options={scale=0.75}] plot coordinates{
	      (0.,1.)(0.01,0.99896)(0.02,0.995809)(0.03,0.990564)(0.04,0.983483)(0.05,0.974368)(0.06,0.963289)(0.07,0.950217)(0.08,0.935453)(0.09,0.919055)(0.1,0.900819)(0.11,0.881349)(0.12,0.860069)(0.13,0.837701)(0.14,0.813929)(0.15,0.789225)(0.16,0.763256)(0.17,0.736593)(0.18,0.709161)(0.19,0.681532)(0.2,0.652937)(0.21,0.624299)(0.22,0.595142)(0.23,0.565768)(0.24,0.537037)(0.25,0.508517)(0.26,0.479995)(0.27,0.45221)(0.28,0.424418)(0.29,0.397055)(0.3,0.370642)(0.31,0.344963)(0.32,0.320174)(0.33,0.296156)(0.34,0.273264)(0.35,0.251097)(0.36,0.230089)(0.37,0.210095)(0.38,0.191335)(0.39,0.173287)(0.4,0.156354)(0.41,0.140724)(0.42,0.126194)(0.43,0.112588)(0.44,0.099988)(0.45,0.088543)(0.46,0.077981)(0.47,0.068355)(0.48,0.059736)(0.49,0.051963)(0.5,0.044954)(0.51,0.038702)(0.52,0.033099)(0.53,0.028047)(0.54,0.023645)(0.55,0.019754)(0.56,0.016417)(0.57,0.013608)(0.58,0.011183)(0.59,0.009115)(0.6,0.00741)(0.61,0.005953)(0.62,0.004697)(0.63,0.003682)(0.64,0.002875)(0.65,0.002209)(0.66,0.001709)(0.67,0.001276)(0.68,0.000938)(0.69,0.000692)(0.7,0.000487)(0.71,0.000338)(0.72,0.000227)(0.73,0.000163)(0.74,0.000123)(0.75,0.000076)(0.76,0.00005)(0.77,0.00003)(0.78,0.000014)(0.79,0.000006)(0.8,0.000002)(0.81,0.000001)(0.82,0.)(0.83,0.)(0.84,0.)(0.85,0.)(0.86,0.)(0.87,0.)(0.88,0.)(0.89,0.)(0.9,0.)(0.91,0.)(0.92,0.)(0.93,0.)(0.94,0.)(0.95,0.)(0.96,0.)(0.97,0.)(0.98,0.)(0.99,0.)(1.,0.)
      };
      \addplot[black] plot coordinates{
	      (0.,1.)(0.01,0.994528)(0.02,0.978292)(0.03,0.951819)(0.04,0.915955)(0.05,0.871823)(0.06,0.820761)(0.07,0.764257)(0.08,0.703876)(0.09,0.641191)(0.1,0.577714)(0.11,0.51484)(0.12,0.453802)(0.13,0.395635)(0.14,0.341159)(0.15,0.290974)(0.16,0.245462)(0.17,0.20481)(0.18,0.169025)(0.19,0.13797)(0.2,0.111391)(0.21,0.088952)(0.22,0.070257)(0.23,0.054886)(0.24,0.04241)(0.25,0.032412)(0.26,0.024501)(0.27,0.018318)(0.28,0.013547)(0.29,0.009908)(0.3,0.007168)(0.31,0.005129)(0.32,0.00363)(0.33,0.002541)(0.34,0.00176)(0.35,0.001205)(0.36,0.000816)(0.37,0.000547)(0.38,0.000362)(0.39,0.000237)(0.4,0.000154)(0.41,0.000099)(0.42,0.000063)(0.43,0.000039)(0.44,0.000024)(0.45,0.000015)(0.46,0.000009)(0.47,0.000005)(0.48,0.000003)(0.49,0.000002)(0.5,0.000001)(0.51,0.000001)(0.52,0.)(0.53,0.)(0.54,0.)(0.55,0.)(0.56,0.)(0.57,0.)(0.58,0.)(0.59,0.)(0.6,0.)(0.61,0.)(0.62,0.)(0.63,0.)(0.64,0.)(0.65,0.)(0.66,0.)(0.67,0.)(0.68,0.)(0.69,0.)(0.7,0.)(0.71,0.)(0.72,0.)(0.73,0.)(0.74,0.)(0.75,0.)(0.76,0.)(0.77,0.)(0.78,0.)(0.79,0.)(0.8,0.)(0.81,0.)(0.82,0.)(0.83,0.)(0.84,0.)(0.85,0.)(0.86,0.)(0.87,0.)(0.88,0.)(0.89,0.)(0.9,0.)(0.91,0.)(0.92,0.)(0.93,0.)(0.94,0.)(0.95,0.)(0.96,0.)(0.97,0.)(0.98,0.)(0.99,0.)(1.,0.)
      };
      \addplot[black,only marks,mark=*,mark options={scale=0.75}] plot coordinates{
	      (0.000000,1.000000)(0.010000,0.994368)(0.020000,0.978268)(0.030000,0.952258)(0.040000,0.917284)(0.050000,0.872770)(0.060000,0.821488)(0.070000,0.766126)(0.080000,0.706967)(0.090000,0.643454)(0.100000,0.580648)(0.110000,0.517506)(0.120000,0.455490)(0.130000,0.396165)(0.140000,0.341170)(0.150000,0.290833)(0.160000,0.244757)(0.170000,0.202599)(0.180000,0.166316)(0.190000,0.135318)(0.200000,0.108255)(0.210000,0.085529)(0.220000,0.066516)(0.230000,0.051258)(0.240000,0.039086)(0.250000,0.029266)(0.260000,0.021645)(0.270000,0.015920)(0.280000,0.011507)(0.290000,0.008071)(0.300000,0.005601)(0.310000,0.003907)(0.320000,0.002626)(0.330000,0.001749)(0.340000,0.001136)(0.350000,0.000720)(0.360000,0.000436)(0.370000,0.000256)(0.380000,0.000152)(0.390000,0.000100)(0.400000,0.000045)(0.410000,0.000024)(0.420000,0.000014)(0.430000,0.000010)(0.440000,0.000003)(0.450000,0.000003)(0.460000,0.000000)(0.470000,0.000000)(0.480000,0.000000)(0.490000,0.000000)(0.500000,0.000000)(0.510000,0.000000)(0.520000,0.000000)(0.530000,0.000000)(0.540000,0.000000)(0.550000,0.000000)(0.560000,0.000000)(0.570000,0.000000)(0.580000,0.000000)(0.590000,0.000000)(0.600000,0.000000)(0.610000,0.000000)(0.620000,0.000000)(0.630000,0.000000)(0.640000,0.000000)(0.650000,0.000000)(0.660000,0.000000)(0.670000,0.000000)(0.680000,0.000000)(0.690000,0.000000)(0.700000,0.000000)(0.710000,0.000000)(0.720000,0.000000)(0.730000,0.000000)(0.740000,0.000000)(0.750000,0.000000)(0.760000,0.000000)(0.770000,0.000000)(0.780000,0.000000)(0.790000,0.000000)(0.800000,0.000000)(0.810000,0.000000)(0.820000,0.000000)(0.830000,0.000000)(0.840000,0.000000)(0.850000,0.000000)(0.860000,0.000000)(0.870000,0.000000)(0.880000,0.000000)(0.890000,0.000000)(0.900000,0.000000)(0.910000,0.000000)(0.920000,0.000000)(0.930000,0.000000)(0.940000,0.000000)(0.950000,0.000000)(0.960000,0.000000)(0.970000,0.000000)(0.980000,0.000000)(0.990000,0.000000)(1.000000,0.000000)
      };
      \node at (axis cs:0.2,0.004) {$N=100$};
      \draw (axis cs:0.32,0.004) ellipse [black,x radius=2,y radius=0.5];
      \node at (axis cs:0.75,0.01) {$N=20$};
      \draw (axis cs:0.65,0.004) ellipse [black,x radius=2,y radius=1.5];
      \legend{ {Limiting theory},{Detector}} 
    \end{semilogyaxis}
  \end{tikzpicture}
  \caption{False alarm rate $P(T_N(\rho_N^*)>\Gamma)$ for $N=20$ and $N=100$, $p=N^{-\frac12}[1,\ldots,1]^\trans$, $[C_N]_{ij}=0.7^{|i-j|}$, $c_N=1/2$.}
  \label{fig:FAR2}
\end{figure}

\section{Proof}
\label{sec:proof}
In this section, we shall successively prove Theorem~\ref{th:bilin}, Theorem~\ref{th:T}, Proposition~\ref{prop:1}, and Corollary~\ref{co:1}. Of utmost interest is the proof of Theorem~\ref{th:bilin} which shall be the concern of most of the section and of Appendix~\ref{app:key_lemma} for the proof of a key lemma.

Before delving into the core of the proofs, let us introduce a few notations that shall be used throughout the section. 
First recall from \citep{COU14} that we can write, for each $\rho\in(\max\{0,1-c_N^{-1}\},1]$,
\begin{align*}
	\hat{C}_N(\rho)=\frac{1-\rho}{1-(1-\rho)c_N} \frac1n\sum_{i=1}^n \frac{z_iz_i^*}{\frac1Nz_i^*\hat{C}_{(i)}^{-1}(\rho)z_i} + \rho I_N
\end{align*}
where $\hat{C}_{(i)}(\rho)=\hat{C}_N(\rho)-(1-\rho)\frac1n\frac{z_iz_i^*}{\frac1Nz_i^*\hat{C}_N^{-1}(\rho)z_i}$.

Now, we define
\begin{align*}
	\alpha(\rho) &= \frac{1-\rho}{1-(1-\rho)c_N} \\
	d_i(\rho) &=  \frac1Nz_i^* \hat{C}_{(i)}^{-1}(\rho)z_i = \frac1Nz_i^* \left( \alpha(\rho) \frac1n\sum_{j\neq i} \frac{z_jz_j^*}{d_j(\rho)} + \rho I_N \right)^{-1}z_i \\
	\tilde d_i(\rho) &= \frac1Nz_i^* \hat{S}_{(i)}^{-1}(\rho)z_i =  \frac1Nz_i^* \left( \alpha(\rho) \frac1n\sum_{j\neq i}^n \frac{z_jz_j^*}{\gamma_N(\rho)} + \rho I_N \right)^{-1}z_i
\end{align*}
Clearly by uniqueness of $\hat{C}_N$ and by the relation to $\hat{C}_{(i)}$ above, $d_1(\rho),\ldots,d_n(\rho)$ are uniquely defined by their $n$ implicit equations.
We shall also discard the parameter $\rho$ for readability whenever not needed.

\subsection{Bilinear form equivalence}

In this section, we prove Theorem~\ref{th:bilin}. As shall become clear, the proof unfolds similarly for each $k\in\ZZ\setminus\{0\}$ and we can therefore restrict ourselves to a single value for $k$. As Theorem~\ref{th:T} relies on $k=-1$, for consistency, we take $k=-1$ from now on. Thus, our objective is to prove that, for $a,b\in\CC^N$ with $\Vert a\Vert=\Vert b\Vert=1$, and for any $\varepsilon>0$,
\begin{align*}
	\sup_{\rho\in \mathcal R_\kappa} N^{1-\varepsilon}\left| a^*\hat{C}_N^{-1}(\rho)b - a^*\hat{S}_N^{-1}(\rho)b \right| \asto 0.
\end{align*}

For this, forgetting for some time the index $\rho$, first write 
\begin{align}
	\label{eq:firsteq}
	a^*\hat{C}_N^{-1}b - a^*\hat{S}_N^{-1}b &= a^*\hat{C}_N^{-1} \left( \frac{\alpha}n \sum_{i=1}^n \left[ \frac1{\gamma_N} - \frac1{d_i} \right] z_iz_i^* \right) \hat{S}_N^{-1}b \\
	\label{eq:secondeq}
&= \frac{\alpha}n \sum_{i=1}^n a^*\hat{C}_N^{-1} z_i\frac{d_i-\gamma_N}{\gamma_Nd_i}z_i^*\hat{S}_N^{-1}b.
\end{align}
In \citep{COU14}, where it is shown that $\Vert \hat{C}_N-\hat{S}_N\Vert\asto 0$ (that is the spectral norm of the inner parenthesis in \eqref{eq:firsteq} vanishes), the core of the proof was to show that $\max_{1\leq i\leq n}|d_i-\gamma_N|\asto 0$ which, along with the convergence of $\gamma_N$ away from zero and the almost sure boundedness of $\Vert\frac1n\sum_{i=1}^nz_iz_i^*\Vert$ for all large $N$ (from e.g.\@ \citep{SIL98}), gives the result. A thorough inspection of the proof in \citep{COU14} reveals that $\max_{1\leq i\leq n}|d_i-\gamma_N|\asto 0$ may be improved into $\max_{1\leq i\leq n}N^{\frac12-\varepsilon}|d_i-\gamma_N|\asto 0$ for any $\varepsilon>0$ but that this speed cannot be further improved beyond $N^{\frac12}$. The latter statement is rather intuitive since $\gamma_N$ is essentially a sharp deterministic approximation for $\frac1N\tr \hat{C}_N^{-1}$ while $d_i$ is a quadratic form on $\hat{C}_{(i)}^{-1}$; classical random matrix results involving fluctuations of such quadratic forms, see e.g.\@ \citep{KAM09}, indeed show that these fluctuations are of order $N^{-\frac12}$. As a consequence, $\max_{1\leq i\leq n}N^{1-\varepsilon}|d_i-\gamma_N|$ and thus $N^{1-\varepsilon}\Vert \hat{C}_N-\hat{S}_N\Vert$ are not expected to vanish for small $\varepsilon$.

This being said, when it comes to bilinear forms, for which we shall naturally have $N^{\frac12-\varepsilon}|a^*\hat{C}_N^{-1}b - a^*\hat{S}_N^{-1}b|\asto 0$, seeing the difference in absolute values as the $n$-term average \eqref{eq:secondeq}, one may expect that the fluctuations of $d_i-\gamma_N$ are sufficiently loosely dependent across $i$ to further increase the speed of convergence from $N^{\frac12-\varepsilon}$ to $N^{1-\varepsilon}$ (which is the best one could expect from a law of large numbers aspect if the $d_i-\gamma_N$ were truly independent). It turns out that this intuition is correct.

Nonetheless, to proceed with the proof, it shall be quite involved to work directly with \eqref{eq:secondeq} which involves the rather intractable terms $d_i$ (as the random solutions to an implicit equation). As in \citep{COU14}, our approach will consist in first approximating $d_i$ by a much more tractable quantity. Letting $\gamma_N$ be this approximation is however not good enough this time since $\gamma_N-d_i$ is a non-obvious quantity of amplitude $O(N^{-\frac12})$ which, due to intractability, we shall not be able to average across $i$ into a $O(N^{-1})$ quantity. Thus, we need a refined approximation of $d_i$ which we shall take to be $\tilde{d}_i$ defined above. Intuitively, since $\tilde{d}_i$ is also a quadratic form closely related to $d_i$, we expect $d_i-\tilde{d}_i$ to be of order $O(N^{-1})$, which we shall indeed observe. With this approximation in place, $d_i$ can be replaced by $\tilde{d}_i$ in \eqref{eq:secondeq}, which now becomes a more tractable random variable (as it involves no implicit equation) that fluctuates around $\gamma_N$ at the expected $O(N^{-1})$ speed.

Let us then introduce the variable $\tilde{d}_i$ in \eqref{eq:firsteq} to obtain
\begin{align*}
a^*\hat{C}_N^{-1}b - a^*\hat{S}_N^{-1}b
	&= a^*\hat{C}_N^{-1} \left( \frac{\alpha}n \sum_{i=1}^n \left[ \frac1{\gamma_N} - \frac1{\tilde{d}_i} \right] z_iz_i^* \right) \hat{S}_N^{-1}b \nonumber \\
	&+ a^*\hat{C}_N^{-1} \left( \frac{\alpha}n \sum_{i=1}^n \left[ \frac1{\tilde{d}_i} - \frac1{d_i} \right] z_iz_i^* \right) \hat{S}_N^{-1}b \\
	&\triangleq \xi_1+\xi_2.
\end{align*}
We will now show that $\xi_1=\xi_1(\rho)$ and $\xi_2=\xi_2(\rho)$ vanish at the appropriate speed and uniformly so on $\mathcal R_\kappa$.

Let us first progress in the derivation of $\xi_1(\rho)$ from which we wish to discard the explicit dependence on $\hat{C}_N$. We have
\begin{align*}
	\xi_1 &= a^*\hat{C}_N^{-1} \left( \frac{\alpha}n \sum_{i=1}^n \left[ \frac1{\gamma_N} - \frac1{\tilde{d}_i} \right] z_iz_i^* \right) \hat{S}_N^{-1}b \\
	&= a^*\hat{S}_N^{-1} \left( \frac{\alpha}n \sum_{i=1}^n \left[ \frac1{\gamma_N} - \frac1{\tilde{d}_i} \right] z_iz_i^* \right) \hat{S}_N^{-1}b \nonumber \\ 
	&+ a^*(\hat{C}_N^{-1}-\hat{S}_N^{-1}) \left( \frac{\alpha}n \sum_{i=1}^n \left[ \frac1{\gamma_N} - \frac1{\tilde{d}_i} \right] z_iz_i^* \right) \hat{S}_N^{-1}b \\
	&= a^*\hat{S}_N^{-1} \left( \frac{\alpha}n \sum_{i=1}^n \frac{\tilde{d}_i-\gamma_N}{\gamma_N^2} z_iz_i^* \right) \hat{S}_N^{-1}b \nonumber \\
	&- a^*\hat{S}_N^{-1} \left( \frac{\alpha}n \sum_{i=1}^n \frac{(\tilde{d}_i-\gamma_N)^2}{\gamma_N^2\tilde{d}_i} z_iz_i^* \right) \hat{S}_N^{-1}b \nonumber \\
	&+ a^*(\hat{C}_N^{-1}-\hat{S}_N^{-1}) \left( \frac{\alpha}n \sum_{i=1}^n \left[ \frac1{\gamma_N} - \frac1{\tilde{d}_i} \right] z_iz_i^* \right) \hat{S}_N^{-1}b \\
	&\triangleq \xi_{11} + \xi_{12} + \xi_{13}.
\end{align*}
The terms $\xi_{12}$ and $\xi_{13}$ exhibit products of two terms that are expected to be of order $O(N^{-\frac12})$ and which are thus easily handled. As for $\xi_{11}$, it no longer depends on $\hat{C}_N$ and is therefore a standard random variable which, although involved, is technically tractable via standard random matrix methods. In order to show that $N^{1-\varepsilon}\max\{|\xi_{12}|,|\xi_{13}|\}\asto 0$ uniformly in $\rho$, we use the following lemma.
\begin{lemma}
	\label{le:1}
	For any $\varepsilon>0$,
	\begin{align*}
		\max_{1\leq i\leq n}\sup_{\rho\in\mathcal R_\kappa}N^{\frac12-\varepsilon}|\tilde{d}_i(\rho)-\gamma_N(\rho)| &\asto 0 \\
		\max_{1\leq i\leq n}\sup_{\rho\in\mathcal R_\kappa}N^{\frac12-\varepsilon}|d_i(\rho)-\gamma_N(\rho)| &\asto 0.
	\end{align*}
\end{lemma}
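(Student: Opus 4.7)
The plan is to prove the bound for $\tilde d_i$ first, then bootstrap it into the bound for $d_i$. The starting observation is that, unlike $d_i$, the quantity $\tilde d_i$ is a genuine (non-implicit) quadratic form in $z_i$, which is Gaussian with covariance $C_N$ and, crucially, independent of $\Si$. I would therefore split
\begin{equation*}
\tilde d_i(\rho) - \gamma_N(\rho) = \Bigl(\tfrac{1}{N}z_i^*\Si z_i - \tfrac{1}{N}\tr C_N\Si\Bigr) + \Bigl(\tfrac{1}{N}\tr C_N\Si - \gamma_N(\rho)\Bigr) \eqdef T_{1,i}(\rho) + T_{2,i}(\rho)
\end{equation*}
and handle the two pieces by complementary tools. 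For $T_{1,i}$, the Hanson--Wright / Gaussian concentration inequality, combined with the uniform bound $\|\Si\|\le 1/\rho\le 1/\kappa$ and $\|C_N\|=O(1)$, yields $\P(|T_{1,i}(\rho)|>N^{-1/2+\varepsilon/2})\le C e^{-cN^{\varepsilon}}$; a union bound over $i\le n$ together with Borel--Cantelli then give $\max_i N^{1/2-\varepsilon}|T_{1,i}(\rho)|\asto 0$ for each fixed $\rho$.

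For $T_{2,i}$, the matrix $\Si$ is, up to a scalar rescaling by $\alpha(\rho)/\gamma_N(\rho)$, the resolvent at $-\rho$ of a classical sample-covariance-type matrix built from $n-1$ Gaussian samples. Standard Bai--Silverstein deterministic-equivalent estimates sharp to rate $N^{-1/2+\varepsilon}$ therefore apply and give
\begin{equation*}
\tfrac{1}{N}\tr C_N \Si - \tfrac{1}{N}\tr C_N\bigl[\tfrac{\alpha(\rho)}{\gamma_N(\rho)}(1+e_N(\rho))^{-1}C_N + \rho I_N\bigr]^{-1} \asto 0,
\end{equation*}
where $e_N(\rho)$ solves the Silverstein fixed-point equation associated with the model. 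Plugging in $\alpha(\rho)=(1-\rho)/(1-(1-\rho)c_N)$ and using the defining equation $1=\int t/(\gamma_N\rho+(1-\rho)t)\,\nu_N(dt)$ identifies this deterministic equivalent algebraically with $\gamma_N(\rho)$ itself, the identification being essentially the one already worked out in \cite[Lemma~1]{COU14}; the removal of the single column $z_i$ only perturbs the trace by $O(1/N)$ via a standard rank-one argument, hence is absorbed in the error. Uniformity over $\rho\in\mathcal R_\kappa$ then follows from a polynomial $\varepsilon$-net argument, since both $\tilde d_i(\rho)$ and $\gamma_N(\rho)$ are $N^{O(1)}$-Lipschitz on $\mathcal R_\kappa$.

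The second statement is obtained by bootstrapping from the baseline $\max_i\sup_\rho|d_i-\gamma_N|\asto 0$ proved in \cite{COU14}. Using the resolvent identity together with $\hat C_{(i)}-\hat S_{(i)}=\frac{\alpha(\rho)}{n}\sum_{j\neq i}\frac{\gamma_N-d_j}{d_j\gamma_N}z_jz_j^*$, one obtains
\begin{equation*}
d_i(\rho) - \tilde d_i(\rho) = -\frac{\alpha(\rho)}{n}\sum_{j\neq i}\frac{\gamma_N(\rho)-d_j(\rho)}{d_j(\rho)\gamma_N(\rho)}\cdot \tfrac{1}{N}z_i^*\hat C_{(i)}^{-1}z_jz_j^*\Si z_i.
\end{equation*}
Each bilinear form $\frac{1}{N}z_i^*\hat C_{(i)}^{-1}z_jz_j^*\Si z_i$ is $O(1)$ almost surely (after a further leave-one-out step on the index $j$ to decouple $z_j$ from $\hat C_{(i)}^{-1}$, again via the Gaussian trace lemma), and $\|\frac{1}{n}\sum_{j\neq i}z_jz_j^*\|$ is uniformly bounded by Mar\u{c}enko--Pastur. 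A self-consistent inequality of the form $\max_i\sup_\rho|d_i-\tilde d_i|\le \eta\max_i\sup_\rho|d_i-\gamma_N|+o(N^{-1/2+\varepsilon})$ with $\eta<1$ (made possible by the baseline convergence $d_j/\gamma_N\to 1$ uniformly) then delivers $\max_i\sup_\rho|d_i-\tilde d_i|=O(N^{-1/2+\varepsilon})$, which combined with the $\tilde d_i$ estimate yields the claim.

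The main obstacle is this last bootstrap step: closing the self-consistent inequality requires a careful further leave-one-out expansion to deal with the lack of independence between $z_j$ and $\hat C_{(i)}^{-1}$, and one must verify that the resulting contraction constant remains strictly below $1$ uniformly in $\rho\in\mathcal R_\kappa$. This is also precisely where the $N^{-1/2}$ barrier that the authors flag (and which prevents upgrading $\|\hat C_N-\hat S_N\|$ to a rate faster than $N^{-1/2+\varepsilon}$) intrinsically appears in the analysis.
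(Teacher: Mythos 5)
Your treatment of the first statement is reasonable: the decomposition of $\tilde d_i - \gamma_N$ into a pure-quadratic-form fluctuation $T_{1,i}$ plus a deterministic-equivalent bias $T_{2,i}$, handled by Gaussian concentration and standard Bai--Silverstein estimates respectively, is a valid route and morally matches the paper, which simply invokes the moment bound $\EE[|\tilde d_k(\rho)-\gamma_N(\rho)|^p]=O(N^{-p/2})$ from \citep{COU14} followed by Markov, a $\sqrt n$-point grid, Borel--Cantelli, and a $\rho$-Lipschitz extension.

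The second statement is where your argument breaks down, and the issue you flag at the end is not a technicality but a genuine hole. The self-consistent inequality
$\max_i\sup_\rho|d_i-\tilde d_i|\le \eta\,\max_i\sup_\rho|d_i-\gamma_N|+o(N^{-1/2+\varepsilon})$
requires $\eta<1$ to close, but the prefactor you would extract from the resolvent identity
$d_i-\tilde d_i=\frac1N z_i^*\hat C_{(i)}^{-1}(\hat S_{(i)}-\hat C_{(i)})\hat S_{(i)}^{-1}z_i$
is, after crude norm bounds, of the order $\alpha(\rho)\|\tfrac1n\sum_j z_jz_j^*\|\,\|\hat C_{(i)}^{-1}\|\,\|\hat S_{(i)}^{-1}\|\,\|z_i\|^2/(N\gamma_N\min_j d_j)$; this is $O(1)$ but not obviously $<1$, and for $\rho$ near the left edge of $\mathcal R_\kappa$ it can certainly exceed $1$. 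The baseline convergence $d_j/\gamma_N\to1$ does not rescue this: that smallness sits in the factor $\max_j|d_j-\gamma_N|$ you are trying to bound, not in the contraction constant, so it cannot be recycled to push $\eta$ below $1$.

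The paper avoids this obstruction entirely by an ordering/monotonicity argument rather than a contraction. It sets $f_i(\rho)=d_i(\rho)/\gamma_N(\rho)$, relabels so that $f_1\le\cdots\le f_n$, and observes that replacing every $f_j$ by $f_n$ inside the defining matrix for $d_n$ only \emph{increases} the resolvent (since $f_n\ge f_j$), leading to the inequality
$\gamma_N(\rho)\le \frac1N z_n^*\bigl(\alpha(\rho)\tfrac1n\sum_{i<n}\tfrac{z_iz_i^*}{\gamma_N(\rho)}+f_n(\rho)\rho\,I_N\bigr)^{-1}z_n$,
an inequality in which the implicit quantities $d_j$ have disappeared. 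Assuming $f_n(\rho^{(n)})>1+N^{\eta-1/2}$ infinitely often, one expands the shifted resolvent, applies the first statement of the lemma ($\tilde d_n\le \gamma_N+N^{\varepsilon-1/2}$ for $\varepsilon<\eta$), and extracts a strictly positive correction of order $N^{\eta-1/2}$, yielding the contradiction $0\le N^{\varepsilon-1/2}-KN^{\eta-1/2}$; the lower tail is handled symmetrically with $f_1$. This is a genuinely different mechanism from yours: no contraction constant is ever needed because the worst index is used to \emph{remove} the implicitness from the equation rather than to estimate it perturbatively. Without this monotonicity trick, your bootstrap step does not close.
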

Note that, while the first result is a standard, easily established, random matrix result, the second result is the aforementioned refinement of the core result in the proof of \citep[Theorem~1]{COU14}.
\begin{proof}[Proof of Lemma~\ref{le:1}]
	We start by proving the first identity.
	From \cite[p.~17]{COU14} (taking $w=-\gamma_N\rho \alpha^{-1}$), we have, for each $p\geq 2$ and for each $1\leq k\leq n$,
	\begin{align*}
		\EE\left[ \left| \tilde{d}_k(\rho) - \gamma_N(\rho) \right|^p\right] &= O(N^{-\frac{p}2})
	\end{align*}
	where the bound does not depend on $\rho>\max\{0,1-1/c\}+\kappa$.
	Let now $\max\{0,1-1/c\}+\kappa=\rho_0<\ldots<\rho_{\lceil\sqrt{n}\rceil}=1$ be a regular sampling of $\mathcal R_\kappa$ in $\lceil\sqrt{n}\rceil$ intervals. We then have, from Markov inequality and the union bound on $n(\lceil\sqrt{n}\rceil+1)$ events, for $C>0$ given,
	\begin{align*}
		P \left( \max_{1\leq k\leq n,0\leq i\leq \lceil\sqrt{n}\rceil}  \left| N^{\frac12-\varepsilon}( \tilde{d}_k(\rho_i) - \gamma_N(\rho_i)) \right| > C \right) &\leq KN^{-p\varepsilon+\frac32}
	\end{align*}
	for some $K>0$ only dependent on $p$ and $C$. From the Borel Cantelli lemma, we then have $\max_{k,i} | N^{\frac12-\varepsilon}( \tilde{d}_k(\rho_i) - \gamma_N(\rho_i))|\asto 0$ as long as $-p\varepsilon+3/2<-1$, which is obtained for $p>5/(2\varepsilon)$. Using $|\gamma_N(\rho)-\gamma_N(\rho')|\leq K|\rho-\rho'|$ for some constant $K$ and each $\rho,\rho'\in\mathcal R_\kappa$ (see \citep[top of Section~5.1]{COU14}) and similarly $\max_{1\leq k\leq n}|\tilde{d}_k(\rho)-\tilde{d}_k(\rho')|\leq K|\rho-\rho'|$ for all large $n$ a.s.\@ (obtained by explicitly writing the difference and using the fact that $\Vert z_k\Vert^2/N$ is asymptotically bounded almost surely), we get
	\begin{align*}
		\max_{1\leq k\leq n}\sup_{\rho\in\mathcal R_\kappa}N^{\frac12-\varepsilon}|\tilde{d}_k(\rho)-\gamma_N(\rho)| &\leq \max_{k,i} N^{\frac12-\varepsilon} | \tilde{d}_k(\rho_i) - \gamma_N(\rho_i) | + KN^{-\varepsilon} \\ &\asto 0.
	\end{align*}

	The second result relies on revisiting the proof of \cite[Theorem~1]{COU14} incorporating the convergence speed on $\tilde{d}_k-\gamma_N$.
	For convenience and compatibility with similar derivations that appear later in the proof, we slightly modify the original proof of \cite[Theorem~1]{COU14}.
	We first define $f_i(\rho)=d_i(\rho)/\gamma_N(\rho)$ and relabel the $d_i(\rho)$ in such a way that $f_1(\rho)\leq \ldots\leq f_n(\rho)$ (the ordering may then depend on $\rho$). Then, we have by definition of $d_n(\rho)=\gamma_N(\rho) f_n(\rho)$
\begin{align*}
	\gamma_N(\rho) f_n(\rho) &= \frac1Nz_n^*\left( \alpha(\rho) \frac1n\sum_{i<n} \frac{z_iz_i^*}{\gamma_N(\rho) f_i(\rho)} + \rho I_N \right)^{-1}z_n \\
	&\leq \frac1Nz_n^*\left( \alpha(\rho) \frac1{f_n(\rho)} \frac1n\sum_{i<n} \frac{z_iz_i^*}{\gamma_N(\rho)} + \rho I_N \right)^{-1}z_n
\end{align*}
where we used $f_n(\rho)\geq f_i(\rho)$ for each $i$. The above is now equivalent to
\begin{align*}
	\gamma_N(\rho) &\leq \frac1Nz_n^* \left( \alpha(\rho) \frac1n\sum_{i<n} \frac{z_iz_i^*}{\gamma_N(\rho)} + f_n(\rho) \rho I_N \right)^{-1}z_n.
\end{align*}
We now make the assumption that there exists $\eta>0$ and a sequence $\{\rho^{(n)}\}\in\mathcal R_\kappa$ such that $f_n(\rho^{(n)})>1+N^{\eta-\frac12}$ infinitely often, which is equivalent to saying $d_n(\rho^{(n)})>\gamma_N(\rho^{(n)})(1+N^{\eta-\frac12})$ infinitely often (i.o.). Then, from these assumptions and the above first convergence result
\begin{align}
	\label{eq:gammaineq}
	\gamma_N(\rho^{(n)}) &\leq \frac1Nz_n^*\left( \alpha(\rho^{(n)}) \frac1n\sum_{i<n} \frac{z_iz_i^*}{\gamma_N(\rho^{(n)})} + \rho^{(n)} (1+N^{\eta-\frac12}) I_N \right)^{-1}z_n \nonumber \\
			&= \tilde{d}_n(\rho^{(n)}) - N^{\eta-\frac12} \frac1Nz_n^*\left( \frac1n\sum_{i<n} \frac{\alpha(\rho^{(n)}) z_iz_i^*}{\rho^{(n)}\gamma_N(\rho^{(n)})} + (1+N^{\eta-\frac12}) I_N \right)^{-1} \nonumber \\
			&\times\left( \frac1n\sum_{i<n} \frac{\alpha(\rho^{(n)}) z_iz_i^*}{\gamma_N(\rho^{(n)})} + \rho^{(n)} I_N \right)^{-1}z_n.
\end{align}
Now, by the first result of the lemma, letting $0<\varepsilon<\eta$, we have
\begin{align*}
	\left| \tilde{d}_n(\rho^{(n)}) - \gamma_N(\rho^{(n)}) \right| &\leq \max_{\rho\in\mathcal R_\kappa} \left| \tilde{d}_n(\rho) - \gamma_N(\rho) \right| \leq N^{\varepsilon-\frac12}
\end{align*}
for all large $n$ a.s., so that, for these large $n$, $\tilde{d}_n(\rho^{(n)}) \leq \gamma_N(\rho^{(n)}) + N^{\varepsilon-\frac12}$. Applying this inequality to the first right-end side term of \eqref{eq:gammaineq} and using the almost sure boundedness of the rightmost right-end side term entails
\begin{align*}
	0 \leq N^{\varepsilon-\frac12} - KN^{\eta-\frac12}
\end{align*}
for some $K>0$ for all large $n$ a.s. But, $N^{\varepsilon/2-1/2} - KN^{\eta/2-1/2}<0$ for all large $N$, which contradicts the inequality. Thus, our initial assumption is wrong and therefore, for each $\eta>0$, we have for all large $n$ a.s., $d_n(\rho)<\gamma_N(\rho)+N^{\eta-\frac12}$ uniformly on $\rho\in\mathcal R_\kappa$. The same calculus can be performed for $d_1(\rho)$ by assuming that $f_1(\rho^{\prime(n)})<1-N^{\eta-\frac12}$ i.o.\@ over some sequence $\rho^{\prime(n)}$; by reverting all inequalities in the derivation above, we similarly conclude by contradiction that $d_1(\rho)>\gamma_N(\rho)-N^{\eta-\frac12}$ for all large $n$, uniformly so in $\mathcal R_\kappa$. Together, both results finally lead, for each $\varepsilon>0$, to
\begin{align*}
	\max_{1\leq k\leq n} \sup_{\rho\in\mathcal R_\kappa} \left| N^{\frac12-\varepsilon} \left( d_k(\rho) - \gamma_N(\rho) \right) \right| &\asto 0
\end{align*}
obtained by fixing $\varepsilon$, taking $\eta$ such that $0<\eta<\varepsilon$, and using $\max_k \sup_{\rho} |d_k(\rho)-\gamma_N(\rho)|<N^{\eta-\frac12}$ for all large $n$ a.s.
\end{proof}

Thanks to Lemma~\ref{le:1}, expressing $\hat{C}_N^{-1}(\rho)-\hat{S}_N^{-1}(\rho)$ as a function of $d_i(\rho)-\gamma_N(\rho)$ and using the (almost sure) boundedness of the various terms involved, we finally get $N^{1-\varepsilon}\xi_{12}\asto 0$ and $N^{1-\varepsilon}\xi_{13}\asto 0$ uniformly on $\rho$. 

\bigskip

It then remains to handle the more delicate term $\xi_{11}$, which can be further expressed as
\begin{align*}
	\xi_{11} &= \frac{\alpha}{\gamma_N^2}a^*\hat{S}_N^{-1}\left( \frac1n \sum_{i=1}^n (\tilde{d}_i-\gamma_N) z_iz_i^* \right)\hat{S}_N^{-1}b \nonumber \\
	&= \frac{\alpha}{\gamma_N^2} \frac1n\sum_{i=1}^n a^*\hat{S}_N^{-1}z_iz_i^*\hat{S}_N^{-1}b \left( \tilde{d}_i - \gamma_N \right).
\end{align*}
For that, we will resort to the following lemma, whose proof is postponed to Appendix~\ref{app:key_lemma}.
\begin{lemma}
	Let $\first$ and $\second$ be random or deterministic vectors, independent of $z_1,\cdots,z_n$, such that $\max\left(\EE[\|\first\|^{k}], \EE[\|\second\|^k]\right) \leq K$ for some $K>0$ and all integer $k$. Then, for each integer $p$,
\begin{align*}
\EE\left[\left|\frac{1}{n}\sum_{i=1}^n \first^*\SN z_iz_i^* \SN \second \left(\frac{1}{N}z_i^* \Si z_i-\gammanrho\right) \right|^{2p}\right] =O\left(N^{-2p}\right)
\end{align*}
\label{le:keylemma1}
\end{lemma}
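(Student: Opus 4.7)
The plan is to decouple each $z_i$ from $\hat{S}_N^{-1}$ by Sherman--Morrison, split each summand into a conditional mean (with respect to the $\sigma$-algebra generated by the other $z_j$'s) and a centered residual, and then bound each moment piece by the Gaussian Wick identity and a combinatorial expansion that exploits the resulting conditional independence across the $z_i$'s.

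First I would apply Sherman--Morrison: $\hat{S}_N^{-1}z_i=\hat{S}_{(i)}^{-1}z_i/(1+\alpha c_N\gamma_N^{-1}\tilde{d}_i)$, so the summand rewrites as
\[
\frac{c^*\hat{S}_{(i)}^{-1}z_iz_i^*\hat{S}_{(i)}^{-1}d}{(1+\alpha c_N\gamma_N^{-1}\tilde{d}_i)^2}\,(\tilde{d}_i-\gamma_N).
\]
By Lemma~\ref{le:1}, the denominator equals $(1+\alpha c_N)^2$ up to $O(N^{-1/2+\varepsilon})$; expanding to first order in $\tilde{d}_i-\gamma_N$ produces a principal term with a deterministic prefactor plus a remainder that is, termwise, of degree at least two in $\tilde{d}_i-\gamma_N$. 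Applying Cauchy--Schwarz with the $L^{q}$ bounds on $\tilde{d}_i-\gamma_N$ already visible in the proof of Lemma~\ref{le:1} shows that the remainder contributes $O(N^{-3p})$ to the target moment, which is negligible.

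For the principal term, set $M_i\triangleq\hat{S}_{(i)}^{-1}$, which is independent of $z_i$, and $\mathcal{F}_i\triangleq\sigma(\{z_j\}_{j\neq i},c,d)$. The Gaussian Wick identity then yields
\[
\EE\!\left[c^*M_iz_iz_i^*M_id\,(\tilde{d}_i-\gamma_N)\,\Big|\,\mathcal{F}_i\right]
=c^*M_iC_NM_id\cdot\Big(\tfrac1N\tr(M_iC_N)-\gamma_N\Big)+\tfrac1N c^*M_iC_NM_iC_NM_id.
\]
The second summand is manifestly $O(N^{-1})$; the first is also $O(N^{-1})$ by a further Sherman--Morrison applied inside $M_i$ combined with the defining fixed-point equation for $\gamma_N$. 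Hence the sum of conditional means has $L^{2p}$ norm $O(N^{-1})$ by Minkowski and contributes $O(N^{-2p})$ to the target moment.

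It then remains to bound the $2p$-th moment of $\tfrac1n\sum_iY_i$, where $Y_i$ is the centered residual of the principal term. Hanson--Wright / trace-lemma estimates give $\|Y_i\|_{L^{2p}}=O(N^{-1/2})$ uniformly in $i$ and $\rho$. Expanding the moment combinatorially, any multi-index $(i_1,\ldots,i_{2p})$ in which some $i_k$ appears exactly once integrates to zero by conditioning on $\mathcal{F}_{i_k}$ and using the centering; only the $O(n^p)$ paired multi-indices survive. The main obstacle is that the $Y_i$'s are not strictly independent across $i$, since $M_i$ still depends on the other $z_j$'s. I would resolve this by a second rank-one reduction $M_i\to\hat{S}_{(i,j)}^{-1}$ (matching the preamble's notation) so as to expose the joint conditional independence of $(z_i,z_j)$ given everything else; after this reduction each surviving pairing is shown to be $O(N^{-p})$ and the total is $O(n^{-p}N^{-p})=O(N^{-2p})$, as claimed.
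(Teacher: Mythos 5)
Your plan captures the right qualitative picture: split the summand into a conditional mean, which is deterministic of size $O(N^{-1})$, and a centered residual $Y_i$ of size $O(N^{-1/2})$ in every $L^q$ norm. The Wick computation for the conditional mean is essentially the same content as the paper's separate treatment of $\beta_{N,2}$ and of $\EE[\beta_{N,1}]$, and the identification of the residual as the hard part is correct. However, the combinatorial pairing argument for the residual has a genuine gap. You claim that any multi-index $(i_1,\ldots,i_{2p})$ in which some $i_k$ appears exactly once ``integrates to zero by conditioning on $\mathcal F_{i_k}$ and using the centering.'' This step fails: the other factors $Y_{i_j}$, $j\neq k$, are not $\mathcal F_{i_k}$-measurable, because every $M_{i_j}=\hat S_{(i_j)}^{-1}$ contains $z_{i_k}$, so you cannot pull $\EE[Y_{i_k}\mid\mathcal F_{i_k}]=0$ out of the product. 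These cross terms are not zero; they are only small, and showing they are small enough is precisely where the work lies. Your proposed fix --- a ``second rank-one reduction'' from $M_i$ to $\hat S_{(i,j)}^{-1}$ --- decouples only one further index. For the $2p$-th moment you have up to $2p$ distinct tangled indices, so you would need to iterate the resolvent perturbation $2p$-fold, and there is no indication of how the resulting tree of correction terms reassembles into $O(N^{-2p})$ rather than losing powers along the way.

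The paper avoids this bookkeeping entirely by decomposing the residual as a martingale difference sequence $\sum_j(\EE_j-\EE_{j-1})\beta_{N,1}$ over the natural filtration $\mathcal F_j=\sigma(z_1,\ldots,z_j)$ and invoking Burkh\"older's inequality (Lemma~\ref{lemma:burkholder}). That inequality delivers the factor $n^p$ automatically, so the task reduces to bounding the $2p$-th moment of a single martingale increment, and cross-index cancellation never needs to be tracked explicitly. If you want to salvage a pairing-style proof, you would essentially have to rebuild the martingale argument anyway; the key missing ingredient in your proposal is a systematic mechanism (such as $\EE_j-\EE_{j-1}$ applied sequentially) that converts the weak dependence between the $Y_i$ into an effective gain of one half power of $N$ per pair, uniformly in $p$.
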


By the Markov inequality and the union bound, similar to the proof of Lemma~\ref{le:1}, we get from Lemma~\ref{le:keylemma1} (with $a=c$ and $d=b$) that, for each $\eta>0$ and for each integer $p\geq 1$,
\begin{align*}
	P \left( \sup_{\rho\in \{\rho_0<\ldots<\rho_{\lceil\sqrt{n}\rceil}\}} N^{1-\varepsilon} |\xi_{11}| > \eta \right) &\leq K N^{-p\varepsilon+\frac12}
\end{align*}
with $K$ only function of $\eta$ and $\rho_0<\ldots<\rho_{\lceil\sqrt{n}\rceil}$ a regular sampling of $\mathcal R_\kappa$.
Taking $p>3/(2\varepsilon)$, we finally get from the Borel Cantelli lemma that
\begin{align*}
	N^{1-\varepsilon} \xi_{11} &\asto 0
\end{align*}
uniformly on $\{\rho_0,\ldots,\rho_{\lceil\sqrt{n}\rceil}\}$ and finally, using Lipschitz arguments as in the proof of Lemma~\ref{le:1}, uniformly on $\mathcal R_\kappa$. Putting all results together, we finally have
\begin{align*}
	\sup_{\rho\in\mathcal R_\kappa} N^{1-\varepsilon} |\xi_1(\rho)| &\asto 0
\end{align*}
which concludes the first part of the proof.

\bigskip 

We now continue with $\xi_2(\rho)$. In order to prove $N^{1-\varepsilon}\xi_2(\rho)\asto 0$ uniformly on $\rho\in \mathcal R_\kappa$, it is sufficient (thanks to the boundedness of the various terms involved) to prove that
\begin{align*}
	\max_{1\leq i\leq n}\sup_{\rho\in\mathcal R_\kappa}  \left| N^{1-\varepsilon} \left(\tilde{d}_i(\rho) - d_i(\rho)\right)\right| &\asto 0.
\end{align*}

To obtain this result, we first need the following fundamental proposition.
\begin{proposition}
	\label{prop:2}
	For any $\varepsilon>0$,
\begin{align*}
	\max_{1\leq k\leq n}\sup_{\rho \in\mathcal R_\kappa} \left| N^{1-\varepsilon} \left( \tilde{d}_k(\rho) - \frac1N z_k^*\left( \alpha(\rho) \frac1n \sum_{i\neq k} \frac{z_iz_i^*}{\tilde{d}_i(\rho)} + \rho I_N \right)^{-1}z_k \right) \right| \asto 0.
\end{align*}
\end{proposition}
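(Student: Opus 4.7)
My plan is to derive Proposition~\ref{prop:2} from a resolvent identity that reduces the claim to an averaging bound of the Lemma~\ref{le:keylemma1} type, together with the discretization argument already used in the proof of Lemma~\ref{le:1}. Introduce $B(\rho) = \alpha(\rho)\frac{1}{n}\sum_{j\neq k}z_jz_j^*/\tilde{d}_j(\rho) + \rho I_N$ and $T_k(\rho) = \frac{1}{N}z_k^* B^{-1}(\rho) z_k$, so that $\tilde{d}_k = \frac{1}{N}z_k^*\hat{S}_{(k)}^{-1}z_k$ and the object of interest is $T_k - \tilde{d}_k$.

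Since $\hat{S}_{(k)} - B = \frac{\alpha}{\gamma_N}\frac{1}{n}\sum_{j\neq k}(\tilde{d}_j-\gamma_N)/\tilde{d}_j\, z_j z_j^*$, the resolvent identity $B^{-1}-\hat{S}_{(k)}^{-1} = B^{-1}(\hat{S}_{(k)}-B)\hat{S}_{(k)}^{-1}$ yields
\[
T_k - \tilde{d}_k \;=\; \frac{\alpha}{N\gamma_N}\cdot\frac{1}{n}\sum_{j\neq k} \frac{\tilde{d}_j-\gamma_N}{\tilde{d}_j}\,(z_k^* B^{-1} z_j)(z_j^* \hat{S}_{(k)}^{-1} z_k).
\]
Applying the same identity a second time to replace the remaining $B^{-1}$ by $\hat{S}_{(k)}^{-1}$ decomposes $T_k - \tilde{d}_k = \mathrm{Main} + \mathrm{Corr}$, where Main has $\hat{S}_{(k)}^{-1}$ on both sides and carries a single factor $(\tilde{d}_j-\gamma_N)$, while Corr is a double sum carrying the product $(\tilde{d}_j-\gamma_N)(\tilde{d}_l-\gamma_N)$.

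Main is precisely the quantity controlled by Lemma~\ref{le:keylemma1}, modulo two modifications: the natural choice $c=d=z_k$ is not independent of the $z_j$, and $\hat{S}_N^{-1}$ has to be replaced by $\hat{S}_{(k)}^{-1}$ (a rank-one difference, accessible via Sherman--Morrison). I condition on $z_k$: after substituting $\tilde{d}_j$ by $\tilde{d}_j^{(k)} := \frac{1}{N}z_j^*\hat{S}_{(j,k)}^{-1}z_j$ (with $\tilde{d}_j - \tilde{d}_j^{(k)} = O(N^{-1})$ almost surely by Sherman--Morrison, the residual handled trivially via Cauchy--Schwarz), every factor inside Main becomes independent of $z_k$. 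Conditional on $z_k$, the variant of Lemma~\ref{le:keylemma1} with $c=d=z_k$ treated as fixed gives a conditional $2p$-th moment bound of order $N^{-2p}\|z_k\|^{4p}$ by the homogeneity of the bound in $c$ and $d$; integrating using $\EE[\|z_k\|^{4p}] = O(N^{2p})$ yields $\EE[|\mathrm{Main}|^{2p}] = O(N^{-2p})$, whence Markov and Borel--Cantelli give $N^{1-\varepsilon}|\mathrm{Main}|\to 0$ pointwise in $\rho$. Corr is handled by bounding one factor $(\tilde{d}_l-\gamma_N)$ uniformly via Lemma~\ref{le:1} and applying the same Lemma~\ref{le:keylemma1}-style argument to the remaining single-index sum.

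The final step is uniform convergence in $\rho$: as in the proofs of Lemma~\ref{le:1} and of the bound on $\xi_{11}$, a regular discretization $\rho_0<\ldots<\rho_{\lceil\sqrt{n}\rceil}$ of $\mathcal R_\kappa$ combined with Lipschitz continuity in $\rho$ of all quantities involved lifts the pointwise bounds to the supremum. The main obstacle is the invocation of Lemma~\ref{le:keylemma1} with $c=d=z_k$: a naive Cauchy--Schwarz bound on Main based only on Lemma~\ref{le:1} yields merely $o(N^{-1/2+\varepsilon})$, a full factor $N^{1/2}$ short; the missing $N^{1/2}$ must come from the averaging cancellation in Lemma~\ref{le:keylemma1}, and the conditioning plus Sherman--Morrison surgery has to be carried out carefully enough that this cancellation is preserved despite the $z_k$-dependence of $\tilde{d}_j$ and $B^{-1}$.
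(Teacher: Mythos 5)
Your approach is essentially the paper's: the resolvent expansion of $\tilde d_k - T_k$, the replacement of denominators by $\gamma_N$ via Lemma~\ref{le:1}, the invocation of Lemma~\ref{le:keylemma1} with $c=d=z_k$ to obtain an $O(N^{-2p})$ moment bound, and the $\sqrt{n}$-discretization plus Lipschitz argument for uniformity in $\rho$. Two small clarifications are worth noting. First, your concern that ``$c=d=z_k$ is not independent of the $z_j$'' is actually not an issue: the averaging in the Main term runs only over $j\neq k$, all matrices involved ($\hat S_{(k)}^{-1}$, $\hat S_{(i,k)}^{-1}$) are built solely from $\{z_j\}_{j\neq k}$, and $z_k$ \emph{is} independent of that collection, so Lemma~\ref{le:keylemma1} applies directly in its $(n-1)$-vector instance (with the $1/\sqrt{N}$ scaling to meet the bounded-moment hypothesis, as you correctly account for); no conditioning surgery is logically required, though conditioning on $z_k$ is a perfectly valid way to phrase the same fact. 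Second, your Sherman--Morrison step to pass from $\tilde d_j = \frac1N z_j^* \hat S_{(j)}^{-1}z_j$ to $\frac1N z_j^* \hat S_{(j,k)}^{-1} z_j$ does correspond to a step the paper performs silently when it writes $\hat S_{(i,k)}^{-1}$ in the displayed application of the lemma; making it explicit, as you do, is sound and arguably cleaner.
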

\begin{proof}
	By expanding the definition of $\tilde{d}_k$, first observe that
	\begin{align*}
		&\tilde{d}_k - \frac1N z_k^*\left( \alpha \frac1n \sum_{i\neq k} \frac{z_iz_i^*}{\tilde{d}_i} + \rho I_N \right)^{-1}z_k \\
		&=\alpha \frac1n\sum_{i\neq k} \frac1N z_k^* \hat{S}_{(k)}^{-1} z_iz_i^* \frac{\gamma_N - \tilde{d}_i}{\gamma_N\tilde{d}_i} \left( \alpha \frac1n \sum_{j\neq k} \frac{z_jz_j^*}{\tilde{d}_j} + \rho I_N \right)^{-1}z_k.
	\end{align*}
	Similar to the derivation of $\xi_1$, we now proceed to approximating $\tilde{d}_i$ in the central denominator and each $\tilde{d}_j$ in the rightmost inverse matrix by the non-random $\gamma_N$. We obtain (from Lemma~\ref{le:1})
	\begin{align*}
	&\tilde{d}_k - \frac1N z_k^*\left( \alpha \frac1n \sum_{i\neq k} \frac{z_iz_i^*}{\tilde{d}_i} + \rho I_N \right)^{-1}z_k \\
	&= \frac{\alpha}{\gamma_N^2} \frac1n\sum_{i\neq k} \frac1N z_k^* \hat{S}_{(k)}^{-1} z_iz_i^* (\gamma_N - \tilde{d}_i) \hat{S}_{(k)}^{-1} z_k + o(N^{\varepsilon-1})
	\end{align*}
	almost surely, for $\varepsilon>0$ and uniformly so on $\rho$. 
	The objective is then to show that the first right-hand side term is  $o(N^{\varepsilon-1})$ almost surely and that this holds uniformly on $k$ and $\rho$. This is achieved by applying Lemma~\ref{le:keylemma1} with $c=d=z_k$. Indeed, Lemma~\ref{le:keylemma1} ensures that, for each integer $p$,\footnote{Note that Lemma~\ref{le:keylemma1} can strictly be applied here for $n-1$ instead of $n$; but since $1/n-1/(n-1)=O(n^{-2})$, this does not affect the result.}
\begin{align*}
	\EE\left[\left|\frac{1}{n}\sum_{i\neq k}\frac{1}{N}z_k^*S_{(k)}^{-1}(\rho) z_iz_i^* S_{(k)}^{-1}(\rho)z_k\left(\frac{1}{N}z_i^*{S}_{(i,k)}^{-1}(\rho)z_i-\gamma_N(\rho)\right)\right|^p\right]=O(N^{-p})
\end{align*}


From this lemma,  applying Markov's inequality, we have for each $k$,
\begin{align*}
	P \left( N^{1-\varepsilon} \left| \frac1n\sum_{i\neq k} \frac1Nz_k^* \hat{S}_{(k)}^{-1}z_iz_i^*\hat{S}_{(k)}^{-1}z_k  \left( \frac1Nz_i^*\hat{S}_{(i,k)}^{-1}z_i - \gamma_N \right) \right| > \eta \right) \leq K N^{-p\varepsilon}
\end{align*}
for some $K>0$ only dependent on $\eta>0$. Applying the union bound on the $n(n+1)$ events for $k=1,\ldots,n$ and for $\rho\in\{\rho_0,\ldots,\rho_n\}$, regular $n$-discretization of $\mathcal R_\kappa$, we then have 
\begin{align*}
	&P \left( \max_{k,j} N^{1-\varepsilon} \left| \frac1n\sum_{i\neq k} \frac1Nz_k^* \hat{S}_{(k)}^{-1}z_iz_i^*\hat{S}_{(k)}^{-1}z_k  \left( \frac1Nz_i^*\hat{S}_{(i,k)}^{-1}z_i - \gamma_N(\rho_j) \right) \right| > \eta \right) \nonumber \\
	&\leq K N^{-p\varepsilon+2}.
\end{align*}
Taking $p>3/\varepsilon$, by the Borel Cantelli lemma the above convergence holds almost surely, we finally get
\begin{align*}
	\max_{k,j} \left| N^{1-\varepsilon} \left( \tilde{d}_k(\rho_j) - \frac1N z_k^*\left( \alpha(\rho_j) \frac1n \sum_{i\neq k} \frac{z_iz_i^*}{\tilde{d}_i(\rho_j)} + \rho_j I_N \right)^{-1}z_k \right) \right| \asto 0.
\end{align*}
Using the $\rho$-Lipschitz property (which holds almost surely so for all large $n$ a.s.) on both terms in the above difference concludes the proof of the proposition. 
\end{proof}

The crux of the proof for the convergence of $\xi_2$ starts now.
In a similar manner as in the proof of Lemma~\ref{le:1}, we define $\tilde{f}_i(\rho)=d_i(\rho)/\tilde{d}_i(\rho)$ and reorder the indexes in such a way that $\tilde{f}_1(\rho)\leq \ldots\leq \tilde{f}_n(\rho)$ (this ordering depending on $\rho$). Then, by definition of $d_n(\rho)=\tilde{f}_i(\rho)\tilde{d}_i(\rho)$,
\begin{align*}
	\tilde{d}_n(\rho) \tilde{f}_n(\rho) &= \frac1Nz_n^*\left( \alpha(\rho) \frac1n\sum_{i<n} \frac{z_iz_i^*}{\tilde{d}_i(\rho)\tilde{f}_i(\rho)} + \rho I_n \right)^{-1}z_n \\
	&\leq \frac1Nz_n^*\left( \alpha(\rho) \frac1{\tilde{f}_n(\rho)} \frac1n\sum_{i<n} \frac{z_iz_i^*}{\tilde{d}_i(\rho)} + \rho I_n \right)^{-1}z_n
\end{align*}
where we used $\tilde{f}_n(\rho)\geq \tilde{f}_i(\rho)$ for each $i$. This inequality is equivalent to
\begin{align*}
	\tilde{d}_n(\rho) &\leq \frac1Nz_n^* \left( \alpha(\rho) \frac1n\sum_{i<n} \frac{z_iz_i^*}{\tilde{d}_i(\rho)} + \tilde{f}_n(\rho) \rho I_n \right)^{-1}z_n.
\end{align*}
Assume now that, over some sequence $\{\rho^{(n)}\}\in\mathcal R_\kappa$, $\tilde{f}_n(\rho^{(n)})>1+N^{\eta-1}$ infinitely often for some $\eta>0$ (or equivalently, $d_n(\rho^{(n)})>\tilde{d}_n(\rho^{(n)})+N^{\eta-1}$ i.o.). Then we would have
\begin{align*}
	\tilde{d}_n(\rho^{(n)}) &\leq \frac1Nz_n^*\left( \alpha(\rho^{(n)}) \frac1n\sum_{i<n} \frac{z_iz_i^*}{\tilde{d}_i(\rho^{(n)})} + \rho^{(n)} (1+N^{\eta-1}) I_N \right)^{-1}z_n \\
	&=\tilde{d}_n(\rho^{(n)}) - N^{\eta-1} \frac1Nz_n^*\left( \frac1n\sum_{i<n} \frac{\alpha(\rho^{(n)}) z_iz_i^*}{\rho^{(n)}\tilde{d}_i(\rho^{(n)})} + (1+N^{\eta-1}) I_N \right)^{-1}\nonumber \\
	&\times\left( \frac1n\sum_{i<n} \frac{\alpha(\rho^{(n)}) z_iz_i^*}{\tilde{d}_i(\rho^{(n)})} + \rho I_N \right)^{-1}z_n.
\end{align*}
But, by Proposition~\ref{prop:2}, letting $0<\varepsilon<\eta$, we have, for all large $n$ a.s.,
\begin{align*}
	\frac1Nz_n^*\left( \alpha(\rho^{(n)}) \frac1n\sum_{i<n} \frac{z_iz_i^*}{\tilde{d}_i(\rho^{(n)})} + \rho^{(n)} I_n \right)^{-1}z_n \leq \tilde{d}_n(\rho^{(n)}) + N^{\varepsilon-1}
\end{align*}
which, along with the uniform boundedness of the $\tilde{d}_i$ away from zero, leads to
\begin{align*}
	\tilde{d}_n(\rho^{(n)}) &\leq \tilde{d}_n(\rho^{(n)}) + N^{\varepsilon-1} - KN^{\eta-1}
\end{align*}
for some $K>0$. But, as $N^{\varepsilon-1} - KN^{\eta-1}<0$ for all large $N$, we obtain a contradiction. Hence, for each $\eta>0$, we have for all large $n$ a.s., $d_n(\rho)<\tilde{d}_n(\rho)+N^{\eta-1}$ uniformly on $\rho\in\mathcal R_\kappa$. Proceeding similarly with $d_1(\rho)$, and exploiting $\limsup_n \sup_\rho \max_i |\tilde{d}_i(\rho)|=O(1)$ a.s.\@, we finally have, for each $0<\varepsilon<\frac12$, that
\begin{align*}
	\max_{1\leq k\leq n} \sup_{\rho\in\mathcal R_\kappa}\left| N^{1-\varepsilon} \left( d_k(\rho) - \tilde{d}_k(\rho) \right) \right| &\asto 0
\end{align*}
(for this, take an $\eta$ such that $0<\eta<\varepsilon$ and use $\max_k \sup_\rho |d_k(\rho)-\tilde{d}_k(\rho)|<N^{\eta-1}$ for all large $n$ a.s.).

Getting back to $\xi_2$, we now have
\begin{align*}
	N^{1-\varepsilon} |\xi_2(\rho)| &= N^{1-\varepsilon} \left|a^*\hat{C}_N^{-1}(\rho) \left( \frac{\alpha(\rho)}n \sum_{i=1}^n \frac{d_i(\rho)-\tilde{d}_i(\rho)}{d_i(\rho)\tilde{d}_i(\rho)} z_iz_i^* \right)\hat{S}_N^{-1}(\rho)b\right|.
\end{align*}
But, from the above result,
\begin{align*}
	N^{1-\varepsilon} \left\Vert \frac{\alpha(\rho)}n \sum_{i=1}^n \frac{d_i(\rho)-\tilde{d}_i(\rho)}{d_i(\rho)\tilde{d}_i(\rho)} z_iz_i^* \right\Vert &\leq N^{1-\varepsilon} \max_{1\leq k\leq n} \left| \frac{d_k(\rho)-\tilde{d}_k(\rho)}{d_k(\rho)\tilde{d}_k(\rho)} \right| \left\Vert \frac{\alpha(\rho)}n \sum_{i=1}^n z_iz_i^* \right\Vert \\
	&\asto 0
\end{align*}
uniformly so on $\rho\in\mathcal R_\kappa$ which, along with the boundedness of $\Vert \hat{C}_N^{-1}\Vert$, $\Vert \hat{S}_N^{-1}\Vert$, $\Vert a\Vert$, and $\Vert b\Vert$, finally gives $N^{1-\varepsilon} \xi_2\asto 0$ uniformly on $\rho\in\mathcal R_\kappa$ as desired.

\bigskip

We have then proved that for each $\varepsilon>0$,
\begin{align*}
	\sup_{\rho\in\mathcal R_\kappa}\left| N^{1-\varepsilon} \left( a^*\hat{C}_N^{-1}(\rho)b - a^*\hat{S}_N^{-1}(\rho)b \right)\right| \asto 0
\end{align*}
which proves Theorem~\ref{th:bilin} for $k=-1$. The generalization to arbitrary $k$ is rather immediate. Writing recursively $\hat{C}_N^k-\hat{S}_N^k=\hat{C}_N^{k-1}(\hat{C}_N-\hat{S}_N)+(\hat{C}_N^{k-1}-\hat{S}_N^{k-1})\hat{S}_N$, for positive $k$ or $\hat{C}_N^k-\hat{S}_N^k=\hat{C}_N^k(\hat{S}_N-\hat{C}_N)\hat{S}_N^{-1}+(\hat{C}_N^{k-1}-\hat{S}_N^{k-1})\hat{S}_N^{-1}$, \eqref{eq:firsteq} becomes a finite sum of terms that can be treated almost exactly as in the proof. This concludes the proof of Theorem~\ref{th:bilin}.

\subsection{Fluctuations of the GLRT detector}
This section is devoted to the proof of Theorem~\ref{th:T}, which shall fundamentally rely on Theorem~\ref{th:bilin}. The proof will be established in two steps. First, we shall prove the convergence for each $\rho\in\mathcal R_\kappa$, which we then generalize to the uniform statement of the theorem.

Let us then fix $\rho\in\mathcal R_\kappa$ for the moment.
In anticipation of the eventual replacement of $\hat{C}_N(\rho)$ by $\underline{\hat{S}}_N(\underline\rho)$, we start by studying the fluctuations of the bilinear forms involved in $T_N(\rho)$ but with $\hat{C}_N(\rho)$ replaced by $\underline{\hat{S}}_N(\underline\rho)$ (note that $T_N(\rho)$ remains constant when scaling $\hat{C}_N(\rho)$ by any constant, so that replacing $\hat{C}_N(\rho)$ by $\underline{\hat{S}}_N(\underline\rho)$ instead of by $\underline{\hat{S}}_N(\underline\rho)\cdot\frac1N\tr \hat{S}_N(\rho)$ as one would expect comes with no effect). 

Our first goal is to show that the vector $\sqrt{N}(\Re[y^*\underline{\hat S}^{-1}_N(\underline\rho) p],\Im[y^*\underline{\hat S}^{-1}_N(\underline\rho) p])$ is asymptotically well approximated by a zero mean Gaussian vector with given covariance matrix. To this end, let us denote $A=[y~p]\in\CC^{N\times 2}$ and $Q_N=Q_N(\underline\rho)=(I_N+(1-\underline{\rho}) m(-\underline{\rho})C_N)^{-1}$. Then, from \cite[Lemma~5.3]{CHA12} (adapted to our current notations and normalizations), for any Hermitian $B\in\CC^{2\times 2}$ and for any $u\in\RR$,
\begin{align}
	&\EE\left[\exp\left( \imath \sqrt{N} u \tr BA^*\left[\underline{\hat{S}}_N(\underline\rho)^{-1}-\frac1{\underline\rho}Q_N(\underline\rho) \right]A \right) ~\Big|~y\right] \nonumber \\
	&= \exp\left(-\frac12 u^2 \Delta_N^2(B;y;p) \right) + O(N^{-\frac12}) \label{eq:characteristic_fun}
\end{align}
where we denote by $\EE[\cdot |y]$ the conditional expectation with respect to the random vector $y$ and where
\begin{align*}
	\Delta_N^2(B;y;p) &\triangleq \frac{cm(-\underline{\rho})^2(1-\underline{\rho})^2\tr \left( A B A^* C_NQ_N^2(\underline\rho)\right)^2}{\underline{\rho}^2 \left(1-c m(-\underline{\rho})^2(1-\underline{\rho})^2 \frac1N\tr C_N^2Q_N^2(\underline\rho)\right) } .
\end{align*}

Also, we have from classical central limit results on Gaussian random variables
\begin{align*}
	\EE\left[\exp\left( \imath \sqrt{N} u \tr B \left[A^*Q_N(\underline\rho)A - \Gamma_N \right]\right)\right] &= \exp \left( -\frac12u^2 \Delta_N^{\prime 2}(B;p) \right) + O(N^{-\frac12})
\end{align*}
where
\begin{align*}
	\Gamma_N &\triangleq \frac1{\underline\rho}\begin{bmatrix} \frac1N\tr C_NQ_N(\underline\rho)& 0 \\ 0 & p^*Q_N(\underline\rho)p \end{bmatrix} \\
	\Delta_N^{\prime 2}(B;p) &\triangleq \frac{B_{11}^2}{\underline\rho^2} \frac1N\tr C_N^2Q_N^2(\underline\rho) + \frac{2|B_{12}|^2}{\underline\rho^2} p^*C_NQ_N^2(\underline\rho)p.
\end{align*}
Besides, the $O(N^{-\frac12})$ terms in the right-hand side of \eqref{eq:characteristic_fun} remains $O(N^{-\frac12})$ under expectation over $y$ (for this, see the proof of \cite[Lemma~5.3]{CHA12}).

Altogether, we then have
\begin{align*}
	& \EE\left[\exp\left( \imath \sqrt{N} u \tr B\left[ A^*\underline{\hat{S}}_N^{-1}(\underline\rho)A - \Gamma_N\right] \right)\right] \nonumber \\
	&= \EE \left[\exp\left(-\frac12 u^2 \Delta_N^2(B;y;p)\right) \right] \exp\left(-\frac12 u^2 \Delta_N^{\prime 2}(B;p) \right) + O(N^{-\frac12}).
\end{align*}

Note now that
\begin{align*}
	A^* C_NQ_N^2(\underline\rho)A - \Upsilon_N &\asto 0
\end{align*}
where
\begin{align*}
	\Upsilon_N &\triangleq \begin{bmatrix} \frac1N\tr C_N^2 Q_N^2(\underline\rho) & 0 \\ 0 & p^*C_NQ_N^2(\underline\rho) p \end{bmatrix}
\end{align*}
so that, by dominated convergence, we obtain
\begin{align*}
	&\EE\left[\exp\left( \imath \sqrt{N} u \tr B\left[ A^*\underline{\hat{S}}_N^{-1}(\underline\rho)A - \Gamma_N\right] \right)\right] \nonumber \\ 
	&= \exp\left(-\frac12 u^2 \left[ \Delta_N^2(B;p) + \Delta_N^{\prime 2}(B;p) \right]\right) + o(1)
\end{align*}
where we defined
\begin{align*}
	\Delta_N^2(B;p) &\triangleq \frac{cm(-\underline{\rho})^2(1-\underline{\rho})^2 \tr \left( B \Upsilon_N\right)^2}{\underline{\rho}^2 \left(1-c m(-\underline{\rho})^2(1-\underline{\rho})^2 \frac1N\tr C_N^2Q_N^2(\underline\rho)\right)}.
\end{align*}

By a generalized L\'evy's continuity theorem argument (see e.g.\@ \cite[Proposition~6]{HAC06}) and the Cramer-Wold device, we conclude that
\begin{align*}
\sqrt{N}\left(y^*\underline{\hat{S}}_N^{-1}(\underline{\rho})y,\Re[y^*\underline{\hat{S}}_N^{-1}(\underline{\rho})p],\Im[y^*\underline{\hat{S}}_N^{-1}(\underline{\rho})p]\right) - Z_N &= o_P(1)
\end{align*}
where $Z_N$ is a Gaussian random vector with mean and covariance matrix prescribed by the above approximation of $\sqrt{N} \tr BA^*\underline{\hat{S}}_N^{-1}A$ for each Hermitian $B$. In particular, taking $B_1\in\left\{ \left[\begin{smallmatrix} 0 & \frac12 \\ \frac12 & 0\end{smallmatrix}\right],\left[\begin{smallmatrix} 0 & \frac{\imath}2 \\ -\frac{\imath}2 & 0\end{smallmatrix}\right]\right\}$ to retrieve the asymptotic variances of $\sqrt{N}\Re[y^*\underline{\hat{S}}_N^{-1}(\underline{\rho})p]$ and $\sqrt{N}\Im[y^*\underline{\hat{S}}_N^{-1}(\underline{\rho})p]$, respectively, gives
\begin{align*}
	\Delta_N^2(B_1;p) &=\frac1{2 \underline{\rho}^2}  p^*C_NQ_N^2(\underline\rho)p \frac{c m(-\underline{\rho})^2(1-\underline{\rho})^2 \frac1N\tr C_N^2Q_N^2(\underline\rho) }{1-c m(-\underline{\rho})^2(1-\underline{\rho})^2 \frac1N\tr C_N^2Q_N^2(\underline\rho)} \\
	\Delta_N^{\prime 2}(B_1;p) &=\frac1{2\underline{\rho}^2} p^*C_NQ_N^2(\underline\rho)p
\end{align*}
and thus $\sqrt{N}(\Re[y^*\underline{\hat{S}}_N^{-1}(\underline{\rho})p],\Im[y^*\underline{\hat{S}}_N^{-1}(\underline{\rho})p])$ is asymptotically equivalent to a Gaussian vector with zero mean and covariance matrix
\begin{align*}
	(\Delta_N^2(B_1;p)+\Delta_N^{\prime 2}(B_1;p))I_2 &= \frac1{2\underline{\rho}^2} \frac{p^*C_NQ_N^2(\underline\rho)p}{1-c m(-\underline{\rho})^2(1-\underline{\rho})^2 \frac1N\tr C_N^2Q_N^2(\underline\rho)} I_2.
\end{align*}

We are now in position to apply Theorem~\ref{th:bilin}. Reminding that $\hat{S}_N^{-1}(\rho) (\rho + \frac1{\gamma_N(\rho)} \frac{1-\rho}{1-(1-\rho)c}) =\underline{\hat{S}}_N^{-1}(\underline\rho)$, we have by Theorem~\ref{th:bilin} for $k=-1$
\begin{align*}
	\sqrt{N} A^*\left[\hat{C}_N^{-1}(\rho)-\frac{\underline{\hat{S}}_N(\underline\rho)^{-1}}{\rho + \frac1{\gamma_N(\rho)} \frac{1-\rho}{1-(1-\rho)c}}\right] A &\asto 0.
\end{align*}
Since almost sure convergence implies weak convergence, $\sqrt{N} A^*\hat{C}_N^{-1}(\rho)A$ has the same asymptotic fluctuations as $\sqrt{N} A^*\underline{\hat{S}}_N^{-1}(\underline\rho)A/(\frac1N\tr \hat{S}_N(\rho))$.
Also, as $T_N(\rho)$ remains identical when scaling $\hat{C}_N^{-1}(\rho)$ by $\frac1N\tr \hat{S}_N(\rho)$, only the fluctuations of $\sqrt{N} A^*\underline{\hat{S}}_N^{-1}(\underline\rho)A$ are of interest, which were previously derived. We then finally conclude by the delta method (or more directly by Slutsky's lemma) that
\begin{align*}
	\sqrt{\frac{N}{y^*\hat{C}_N^{-1}(\rho)y p^*\hat{C}_N^{-1}(\rho)p}} \begin{bmatrix} \Re\left[ y^*\hat{C}_N^{-1}(\rho)p \right] \\ \Im\left[ y^*\hat{C}_N^{-1}(\rho)p \right] \end{bmatrix} - \sigma_N(\underline{\rho}) Z' = o_P(1)
\end{align*}
for some $Z'\sim \mathcal N(0,I_2)$ and
\begin{align*}
	\sigma_N^2(\underline\rho) &\triangleq \frac12 \frac{ p^*C_NQ_N^2(\underline\rho)p}{  p^*Q_N(\underline\rho)p\cdot \frac1N\tr C_NQ_N(\underline\rho)\cdot \left(1-c m(-\underline{\rho})^2(1-\underline{\rho})^2 \frac1N\tr C_N^2Q_N^2(\underline\rho)\right)  }.
\end{align*}
It unfolds that, for $\gamma>0$,
\begin{align}
	\label{eq:conv_proba_rho}
	P\left( T_N(\rho) > \frac{\gamma}{\sqrt{N}} \right) - \exp \left( - \frac{\gamma^2}{2\sigma_N^2(\underline\rho)} \right) \to 0
\end{align}
as desired.

\bigskip

The second step of the proof is to generalize \eqref{eq:conv_proba_rho} to uniform convergence across $\rho\in\mathcal R_\kappa$. To this end, somewhat similar to above, we shall transfer the distribution $P(\sqrt{N}T_N(\rho) > \gamma)$ to $P(\sqrt{N}\underline T_N(\rho) > \gamma)$ by exploiting the uniform convergence of Theorem~\ref{th:bilin}, where we defined\begin{align*}
	\underline T_N(\rho) &\triangleq \frac{\left|y^*\underline{\hat S}_N(\underline\rho) p\right|}{\sqrt{y^*\underline{\hat S}_N(\underline\rho) y}\sqrt{p^*\underline{\hat S}_N(\underline\rho) p}}
\end{align*}
and exploit a $\rho$-Lipschitz property of $\sqrt{N}\underline T_N(\rho)$ to reduce the uniform convergence over $\mathcal R_\kappa$ to a uniform convergence over finitely many values of $\rho$. 

The $\rho$-Lipschitz property we shall need is as follows: for each $\varepsilon>0$
\begin{align}
	\label{eq:tightness_condition}
	\lim_{\delta\to 0} \lim_{N\to\infty} P\left( \sup_{\substack{\rho,\rho'\in\mathcal R_\kappa \\ |\rho-\rho'|<\delta} } \sqrt{N}\left|T_N(\rho)-T_N(\rho')\right| > \varepsilon \right) &= 0.
\end{align}
Let us prove this result. By Theorem~\ref{th:bilin}, since almost sure convergence implies convergence in distribution, we have 
\begin{align*}
	P\left( \sup_{\rho\in\mathcal R_\kappa} \sqrt{N}\left|T_N(\rho)-\underline T_N(\rho)\right| > \varepsilon \right) &\to 0.
\end{align*}
Applying this result to \eqref{eq:tightness_condition} induces that it is sufficient to prove \eqref{eq:tightness_condition} for $\underline T_N(\rho)$ in place of $T_N(\rho)$.
Let $\eta>0$ small and $\mathcal A_N^\eta\triangleq \{\exists \underline\rho \in \mathcal R_\kappa,y^*\underline{\hat{S}}_N^{-1}(\underline\rho)yp^*\underline{\hat{S}}_N^{-1}(\underline\rho)p<\eta\}$. Developing the difference $\underline T_N(\rho)-\underline T_N(\rho')$ and isolating the denominator according to its belonging to $\mathcal A_N^\eta$ or not, we may write
\begin{align*}
	& P\left( \sup_{\substack{\rho,\rho'\in\mathcal R_\kappa \\ |\rho-\rho'|<\delta} } \sqrt{N}\left|\underline T_N(\rho)-\underline T_N(\rho')\right| > \varepsilon \right) \nonumber \\
	&\leq P\left( \mathcal A_N^\eta \right) + P\left( \sup_{\substack{\rho,\rho'\in\mathcal R_\kappa \\ |\rho-\rho'|<\delta} } \sqrt{N} V_N(\rho,\rho')>\varepsilon\eta \right)
\end{align*}
where
\begin{align*}
V_N(\rho,\rho') &\triangleq \left| y^*\underline{\hat{S}}_N^{-1}(\underline\rho)p \right| \sqrt{y^*\underline{\hat{S}}_N^{-1}(\underline\rho')y}\sqrt{p^*\underline{\hat{S}}_N^{-1}(\underline\rho')p} \nonumber \\
&- \left| y^*\underline{\hat{S}}_N^{-1}(\underline\rho')p \right| \sqrt{y^*\underline{\hat{S}}_N^{-1}(\underline\rho)y}\sqrt{p^*\underline{\hat{S}}_N^{-1}(\underline\rho)p}.
\end{align*}

From classical random matrix results, $P( \mathcal A_N^\eta )\to 0$ for a sufficiently small choice of $\eta$. To prove that $\lim_\delta \limsup_n P(\sup_{|\rho-\rho'|<\delta} \sqrt{N} V_N(\rho,\rho')>\varepsilon\eta)=0$, it is then sufficient to show that
\begin{align}
	\label{eq:rho-rho'}
	\lim_{\delta\to 0} \limsup_n P\left( \sup_{\substack{\rho,\rho'\in\mathcal R_\kappa \\ |\rho-\rho'|<\delta}} \sqrt{N}|y^*\underline{\hat S}_N(\underline\rho)^{-1}p-y^*\underline{\hat S}_N(\underline\rho')^{-1}p| > \varepsilon' \right) = 0
\end{align}
for any $\varepsilon'>0$ and similarly for $y^*\underline{\hat S}_N(\underline\rho)^{-1}y-y^*\underline{\hat S}_N(\underline\rho')^{-1}y$ and $p^*\underline{\hat S}_N(\underline\rho)^{-1}p-p^*\underline{\hat S}_N(\underline\rho')^{-1}p$.
Let us prove \eqref{eq:rho-rho'}, the other two results following essentially the same line of arguments.
For this, by \cite[Corollary~16.9]{KAL02} (see also \cite[Theorem~12.3]{BIL68}), it is sufficient to prove, say
\begin{align*}
	\sup_{\substack{\rho,\rho'\in \mathcal R_\kappa\\ \rho\neq \rho'}} \sup_n  \frac{\EE \left[\sqrt{N}\left|y^*\underline{\hat S}_N(\underline\rho)^{-1}p-y^*\underline{\hat S}_N(\underline\rho')^{-1}p \right|^2\right]}{|\rho-\rho'|^2} < \infty.
\end{align*}
But then, remarking that
\begin{align*}
	&\sqrt{N} y^*\underline{\hat S}_N(\underline\rho)^{-1}p-y^*\underline{\hat S}_N(\underline\rho')^{-1}p \nonumber \\
	&= (\underline\rho'-\underline\rho) \sqrt{N} y^*\underline{\hat S}_N(\underline\rho)^{-1}\left( I_N - \frac1n\sum_{i=1}^n z_iz_i^* \right)\underline{\hat S}_N(\underline\rho')^{-1}p
\end{align*}
this reduces to showing that
\begin{align*}
	\sup_{\rho,\rho'\in \mathcal R_\kappa} \sup_n \EE \left[ N \left| y^*\underline{\hat S}_N(\underline\rho)^{-1}\left( I_N - \frac1n\sum_{i=1}^n z_iz_i^* \right)\underline{\hat S}_N(\underline\rho')^{-1}p \right|^2\right] <\infty.
\end{align*}
Conditioning first on $z_1,\ldots,z_n$, this further reduces to showing
\begin{align*}
	\sup_{\rho,\rho'\in \mathcal R_\kappa} \sup_n \EE \left[ \left\Vert \underline{\hat S}_N(\underline\rho)^{-1}\left( I_N - \frac1n\sum_{i=1}^n z_iz_i^* \right)\underline{\hat S}_N(\underline\rho')^{-1}p \right\Vert^2\right] <\infty.
\end{align*}
But this is yet another standard random matrix result, obtained e.g., by noticing that
\begin{align*}
	\left\Vert \underline{\hat S}_N(\underline\rho)^{-1}\left( I_N - \frac1n\sum_{i=1}^n z_iz_i^* \right)\underline{\hat S}_N(\underline\rho')^{-1}p \right\Vert^2 &\leq \frac1{\kappa^4}\left\Vert I_N - \frac1n\sum_{i=1}^n z_iz_i^* \right\Vert^2
\end{align*}
which remains of uniformly finite expectation (left norm is vector Euclidean norm, right norm is matrix spectral norm). This completes the proof of \eqref{eq:tightness_condition}.

\smallskip

Getting back to our original problem, let us now take $\varepsilon>0$ arbitrary, $\rho_1<\ldots<\rho_K$ be a regular sampling of $\mathcal R_\kappa$, and $\delta=1/K$. Then by \eqref{eq:conv_proba_rho}, $K$ being fixed, for all $n>n_0(\varepsilon)$,
\begin{align}
	\label{eq:Kfoldconv}
	\max_{1\leq k\leq K} \left| P\left( T_N(\rho_i) > \frac{\gamma}{\sqrt{N}} \right) - \exp\left( -\frac{\gamma^2}{2\sigma_N^2(\rho_i)} \right) \right| &< \varepsilon.
\end{align}
Also, from \eqref{eq:tightness_condition}, for small enough $\delta$,
\begin{align*}
	&\max_{1\leq k\leq K} P\left( \sup_{\substack{\rho\in\mathcal R_\kappa \\ |\rho-\rho_k|<\delta}} \sqrt{N}|T_N(\rho)-T_N(\rho_k)| > \gamma \zeta \right) \nonumber \\
	&\leq P\left( \sup_{\substack{\rho,\rho'\in\mathcal R_\kappa \\ |\rho-\rho'|<\delta}} \sqrt{N}|T_N(\rho)-T_N(\rho')| > \gamma \zeta \right) \\
	&< \varepsilon
\end{align*}
for all large $n>n_0'(\varepsilon,\zeta)>n_0(\varepsilon)$ where $\zeta>0$ is also taken arbitrarily small. Thus we have, for each $\rho\in \mathcal R_\kappa$ and for $n>n_0'(\varepsilon,\zeta)$
\begin{align*}
	P\left( T_N(\rho) > \frac{\gamma}{\sqrt{N}} \right) &\leq P\left( T_N(\rho_i) > \frac{\gamma(1-\zeta)}{\sqrt{N}}  \right)+ P\left( \sqrt{N} |T_N(\rho)-T_N(\rho_i)|>\gamma\zeta\right) \\
	&\leq P\left( T_N(\rho_i) > \frac{\gamma(1-\zeta)}{\sqrt{N}} \right) + \varepsilon
\end{align*}
for $i\leq K$ the unique index such that $|\rho-\rho_i|<\delta$ and where the inequality holds uniformly on $\rho\in \mathcal R_\kappa$. Similarly, reversing the roles of $\rho$ and $\rho_i$,
\begin{align*}
	P\left(  T_N(\rho) > \frac{\gamma}{\sqrt{N}} \right) &\geq P\left(  T_N(\rho_i) > \frac{\gamma(1+\zeta)}{\sqrt{N}} \right) - \varepsilon.
\end{align*}
As a consequence, by \eqref{eq:Kfoldconv}, for $n>n_0'(\varepsilon,\zeta)$, uniformly on $\rho\in\mathcal R_\kappa$,
\begin{align*}
	P\left( T_N(\rho) > \frac{\gamma}{\sqrt{N}} \right) &\leq \exp\left( -\frac{\gamma^2(1-\zeta)^2}{2\sigma_N^2(\rho_i)} \right) + 2\varepsilon \\
	P\left( T_N(\rho) > \frac{\gamma}{\sqrt{N}} \right) &\geq \exp\left( -\frac{\gamma^2(1+\zeta)^2}{2\sigma_N^2(\rho_i)} \right) - 2\varepsilon
\end{align*}
which, by continuity of the exponential and of $\rho\mapsto \sigma_N(\rho)$,\footnote{Note that it is unnecessary to ensure $\liminf_N \sigma_N(\rho)>0$ as the exponential would tend to zero anyhow in this scenario.} letting $\zeta$ and $\delta$ small enough (up to growing $n_0'(\varepsilon,\zeta)$), leads to
\begin{align*}
	\sup_{\rho\in\mathcal R_\kappa}\left| P\left( \sqrt{N} T_N(\rho) > \gamma \right) - \exp\left( -\frac{\gamma^2}{2\sigma_N^2(\rho)} \right)\right| &\leq 3\varepsilon
\end{align*}
for all $n>n_0'(\varepsilon,\zeta)$, which completes the proof.

\subsection{Around empirical estimates}

This section is dedicated to the proof of Proposition~\ref{prop:1} and Corollary~\ref{co:1}.

We start by showing that $\hat{\sigma}^2_N(1)$ is well defined. It is easy to observe that the ratio defining $\hat{\sigma}^2_N(\underline\rho)$ converges to an undetermined form (zero over zero) as $\underline\rho\uparrow 1$. Applying l'Hospital's rule to the ratio, using the differentiation $\frac{d}{d\underline\rho} \underline{\hat{S}}^{-1}_N(\underline\rho)=-\underline{\hat{S}}^{-2}_N(\underline\rho)(I_N-\frac1n\sum_i z_iz_i^*)$ and the limit $\underline{\hat{S}}^{-1}_N(\underline\rho)\to I_N$ as $\underline\rho\uparrow 1$, we end up with
\begin{align*}
	\hat{\sigma}^2_N(\underline\rho) \to \frac12 \frac{p^*\left( \frac1n\sum_{i=1}^n z_iz_i^* \right)p}{\frac1N\tr \left( \frac1n\sum_{i=1}^n z_iz_i^* \right)}.
\end{align*}
Letting $\varepsilon>0$ arbitrary, since $p^*\frac1n\sum_i z_iz_i^*p - p^*C_Np\asto 0$, $\frac1N\tr \frac1n\sum_i z_iz_i^*\asto 1$ as $n\to\infty$, we immediately have, by continuity of both $\sigma^2_N(\underline\rho)$ and $\hat\sigma^2_N(\underline\rho)$,
\begin{align*}
	\sup_{\rho \in (1-\kappa,1]}\left|\hat{\sigma}^2_N(\underline\rho) - \sigma^2_N(\underline\rho) \right| &\leq \varepsilon
\end{align*}
for all large $n$ almost surely. From now on, it then suffices to prove Proposition~\ref{prop:1} on the complementary set $\mathcal R_\kappa'\triangleq [\kappa+\min\{0,1-c^{-1}\},1-\kappa]$.
For this, we first recall the following results borrowed from \citep{COU14}:
\begin{align*}
	\sup_{\rho\in\mathcal R_\kappa} \left\Vert \frac{\hat{C}_N(\rho)}{\frac1N\tr \hat{C}_N(\rho)} - \underline{\hat{S}}_N(\underline\rho) \right\Vert &\asto 0.
\end{align*}
Also, for $z\in\CC\setminus \RR^+$, defining
\begin{align*}
	\underline{\underline{\hat S}}_N(z)&\triangleq (1-\underline\rho)\frac1n\sum_{i=1}^n z_iz_i^* -z I_N
\end{align*}
(so in particular $\underline{\underline{\hat S}}_N(-\underline\rho)=\underline{\hat S}_N(\underline\rho)$, for all $\underline\rho\in \mathcal R_\kappa$), we have, with $\mathcal C$ a compact set of $\CC\setminus\RR^+$ and any integer $k$,
\begin{align*}
	\sup_{\bar z\in\mathcal C} \left| \frac{d^k}{dz^k} \left\{ \frac1N\tr \underline{\underline{\hat S}}_N^{-1}(z) - \frac1N\tr \left(-z \left[I_N + (1-\underline\rho)m_N(z)C_N\right]\right)^{-1} \right\}_{z=\bar z} \right| &\asto 0 \\
	\sup_{\bar z\in\mathcal C} \left| \frac{d^k}{dz^k} \left\{ p^*\underline{\underline{\hat S}}_N^{-1}(z)p - p^*\left(-z \left[I_N + (1-\underline\rho)m_N(z)C_N\right]\right)^{-1}p \right\}_{z=\bar z} \right| &\asto 0
\end{align*}
where $m_N(z)$ is defined as the unique solution with positive (resp.\@ negative) imaginary part if $\Im[z]>0$ (resp.\@ $\Im[z]<0$) or unique positive solution if $z<0$ of
\begin{align*}
	m_N(z) &= \left( -z + c \int \frac{(1-\underline\rho)t}{1+(1-\underline\rho)tm_N(z)}\nu_N(dt) \right)^{-1}
\end{align*}
(this follows directly from \citep{SIL95}).

This expression of $m_N(z)$ can be more rewritten under the more convenient form
\begin{align*}
	m_N(z) &= -\frac{1-c}z + c \int \frac{\nu_N(dt)}{-z-z(1-\underline\rho)tm_N(z)} \\
	&= -\frac{1-c}z + c \frac1N\tr \left(-z \left[I_N + (1-\underline\rho)m_N(z)C_N\right]\right)^{-1}
\end{align*}
so that, from the above relations
\begin{align*}
	\sup_{\rho \in\mathcal R_\kappa'} \left| m_N(-\underline\rho) - \left( \frac{1-c_N}{\underline\rho} + c_N \frac1N\tr \hat{C}_N^{-1}(\rho)\cdot\frac1N\tr \hat{C}_N(\rho) \right) \right| &\asto 0 \\
	\sup_{\rho \in\mathcal R_\kappa'} \left| \int \frac{t\nu_N(dt)}{1+(1-\underline\rho)m_N(-\underline\rho)t} - \frac{ 1 - \underline\rho \frac1N\tr \hat{C}_N^{-1}(\rho)\cdot\frac1N\tr \hat{C}_N(\rho)}{(1-\underline\rho)m_N(-\underline\rho)} \right| &\asto 0.
\end{align*}
Differentiating along $z$ the first defining identity of $m_N(z)$, we also recall that
\begin{align*}
	m_N'(z) &= \frac{m_N^2(z)}{1-c \int \frac{m_N(z)^2(1-\underline\rho)^2t^2\nu_N(dt)}{(1-(1-\underline\rho)tm_N(-\underline\rho))^2}}.
\end{align*}
Now, remark that
\begin{align*}
	p^*\underline{\underline{\hat S}}_N(\underline\rho)^{-2}p &= \frac{d}{dz} \left[ p^*\underline{\underline{\hat S}}_N(z)^{-1}p \right]_{z=-\underline\rho}
\end{align*}
which (by analyticity) is uniformly well approximated by
\begin{align*}
	& \frac{d}{dz} \left[ p^*\left( -z \left[ I_N + (1-\underline\rho)m_N(z) C_N\right]\right)^{-1}p \right]_{z=-\underline\rho} \nonumber \\
	&= \frac1{\underline\rho^2} p^*Q_N(\underline\rho)p - \frac1{\underline\rho} (1-\underline\rho) m_N'(-\underline\rho)p^*C_NQ^2_N(\underline\rho)p \\
	&= \frac1{\underline\rho^2} p^*Q_N(\underline\rho)p - \frac1{\underline\rho} (1-\underline\rho) \frac{m_N^2(-\underline\rho)p^*C_NQ^2_N(\underline\rho)p}{1-c m_N(-\underline\rho)^2(1-\underline\rho)^2 \frac1N\tr Q^2_N(\underline\rho)}. 
\end{align*}
(recall that $Q_N(\underline\rho)=\left(I_N+(1-\underline\rho)m_N(-\underline\rho)C_N \right)^{-1}$).
We then conclude
\begin{align*}
	&\sup_{\rho\in\mathcal R_\kappa'} \left| \frac{p^*C_NQ^2_N(\underline\rho)p}{1-c m_N(-\underline\rho)^2(1-\underline\rho)^2 \frac1N\tr Q^2_N(\underline\rho)} \right. \nonumber \\
	& \left. - \frac{p^*\hat{C}_N^{-1}(\rho)p\cdot \frac1N\tr \hat{C}_N(\rho)-\underline\rho p^*\hat{C}_N^{-2}(\rho)p\cdot \left( \frac1N\tr \hat{C}_N(\rho)\right)^2}{(1-\underline\rho)m_N(-\underline\rho)^2} \right| &\asto 0.
\end{align*}
Putting all results together, we obtain the expected result.

\bigskip

It now remains to prove Corollary~\ref{co:1}. This is easily performed thanks to Theorem~\ref{th:T} and Proposition~\ref{prop:1}.
From these, we indeed have the three relations
\begin{align*}
	P\left( \sqrt{N} T_N(\hat\rho_N^*) > \gamma \right) - \exp\left(- \frac{\gamma^2}{2 \sigma_N^2(\underline{\hat\rho}_N^*)} \right) &\asto 0 \\
	P\left( \sqrt{N} T_N(\rho_N^*) > \gamma \right) - \exp\left(- \frac{\gamma^2}{2 \sigma_N^2(\underline\rho_N^*)} \right) &\to 0 \\
	\exp\left(- \frac{\gamma^2}{2 \sigma_N^2(\underline{\hat\rho}_N^*)} \right) - \exp\left(- \frac{\gamma^2}{2 \sigma_N^{*2}} \right) &\asto 0
\end{align*}
where we denoted $\rho_N^*$ any element in the argmin over $\rho$ of $P(\sqrt{N}T_N(\rho)>\gamma)$ (and $\underline\rho_N^*$ its associated value through the mapping $\rho\mapsto\underline\rho$) and $\sigma_N^{*2}$ the minimum of $\sigma_N(\underline\rho)$ (i.e.\@ the minimizer for $\exp(- \frac{\gamma^2}{2 \sigma_N^2(\underline\rho)})$). Note that the first two relations rely fundamentally on the uniform convergence $\sup_{\rho\in\mathcal R_\kappa}|P\left( \sqrt{N} T_N(\rho) > \gamma \right)-\exp\left(-\gamma^2/(2\sigma_N^2(\underline\rho)) \right)|\asto 0$. By definition of $\rho_N^*$ and $\sigma_N^{*2}$, we also have
\begin{align*}
	\exp\left(- \frac{\gamma^2}{2 \sigma_N^{*2}} \right) &\leq \min\left\{ \exp\left(- \frac{\gamma^2}{2 \sigma_N^2(\underline{\hat\rho}_N^*)} \right) , \exp\left(- \frac{\gamma^2}{2 \sigma_N^2(\underline{\rho}_N^*)} \right) \right\} \\
	P\left( \sqrt{N} T_N(\rho_N^*) > \gamma \right) &\leq P\left( \sqrt{N} T(\hat\rho_N^*) > \gamma \right).
\end{align*}
Putting things together then gives
\begin{align*}
	P\left( \sqrt{N} T(\hat\rho_N^*) > \gamma \right) - P\left( \sqrt{N} T_N(\rho_N^*) > \gamma \right) \asto 0
\end{align*}
which is the expected result.

\appendix

\section{Proof of Lemma~\ref{le:keylemma1}}
\label{app:key_lemma}
This section is devoted to the proof of the key Lemma~\ref{le:keylemma1}.
The proof relies on an appropriate decomposition of the quantity under study as a sum of martingale differences. Before delving into the core of the proofs, we introduce some notations along with some of the key-lemmas that will be extensively used in this section.

In this section, $\EE_j$ will denote the conditional expectation with respect to the $\sigma-$ field $\mathcal{F}_j$ generated by the vectors $\left(z_\ell,1\leq \ell \leq j\right)$. By convention, $\EE_0=\EE$.
\begin{paragraph}
{\it Useful lemmas}
 
We shall review two key lemmas that will be extensively used, namely the generalized H\"older inequality as well as an instance of Jensen's inequality.
\begin{lemma}[Jensen Inequality, \citep{boyd}]
	Let $\mathcal{I}$ be a discrete set of elements of $\{1,\ldots,n\}$ with finite cardinality denoted by $\left|\mathcal{I}\right|$. Let $\left(\theta_i\right)_{i\in \mathcal{I}}$ be a sequence of complex scalars indexed by the set $\mathcal{I}$. Then, for any $p\geq 1$,
\begin{align*}
\left|\sum_{i\in\mathcal{I}}\theta_i\right|^p \leq \left|\mathcal{I}\right|^{p-1} \sum_{i=1}^n \left|\theta_i\right|^p
\end{align*}
\label{lemma:jensen}
\end{lemma}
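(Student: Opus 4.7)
The plan is to derive the inequality in two elementary steps: first, reduce to a sum of nonnegative reals via the triangle inequality, and second, apply convexity of the map $x\mapsto x^p$ on $[0,\infty)$ for $p\geq 1$ (this is the ``Jensen content'' of the statement). No probabilistic machinery is needed; the lemma is a purely deterministic inequality for finite sums.

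Concretely, I would first write
\begin{align*}
\left|\sum_{i\in\mathcal{I}} \theta_i\right|^{p} \leq \left(\sum_{i\in\mathcal{I}} |\theta_i|\right)^{p}
\end{align*}
by the triangle inequality in $\mathbb{C}$, followed by monotonicity of $x\mapsto x^p$ on $[0,\infty)$. Then I would rewrite the right-hand side by factoring out $|\mathcal{I}|$:
\begin{align*}
\left(\sum_{i\in\mathcal{I}} |\theta_i|\right)^{p} = |\mathcal{I}|^{p}\left(\frac{1}{|\mathcal{I}|}\sum_{i\in\mathcal{I}} |\theta_i|\right)^{p}.
\end{align*}
The expression in parentheses on the right is the average of the nonnegative numbers $|\theta_i|$ with respect to the uniform probability measure on $\mathcal{I}$, so Jensen's inequality applied to the convex function $\varphi(x)=x^p$ (with $p\geq 1$) yields
\begin{align*}
\left(\frac{1}{|\mathcal{I}|}\sum_{i\in\mathcal{I}} |\theta_i|\right)^{p} \leq \frac{1}{|\mathcal{I}|}\sum_{i\in\mathcal{I}} |\theta_i|^{p}.
\end{align*}
Combining these three bounds gives $\bigl|\sum_{i\in\mathcal{I}}\theta_i\bigr|^{p}\leq |\mathcal{I}|^{p-1}\sum_{i\in\mathcal{I}} |\theta_i|^{p}$, and since $\mathcal{I}\subseteq\{1,\ldots,n\}$ and $|\theta_i|^p\geq 0$, one has $\sum_{i\in\mathcal{I}}|\theta_i|^p\leq \sum_{i=1}^n|\theta_i|^p$ (after extending $\theta_i$ by zero outside $\mathcal{I}$ if needed), which delivers the form stated in the lemma.

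There is essentially no obstacle: the only slightly delicate point is justifying the convexity-based step without citing Jensen as a black box. If a self-contained derivation is preferred, one can alternatively invoke the classical power-mean inequality, or apply H\"older's inequality with exponents $p$ and $p/(p-1)$ to the product $\sum_{i\in\mathcal{I}} 1\cdot|\theta_i|$, which gives $\sum_{i\in\mathcal{I}}|\theta_i|\leq |\mathcal{I}|^{1-1/p}\bigl(\sum_{i\in\mathcal{I}}|\theta_i|^p\bigr)^{1/p}$; raising to the $p$-th power recovers the same bound. Either route closes the proof in a few lines.
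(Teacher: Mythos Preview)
Your proof is correct and entirely standard. The paper does not actually prove this lemma: it is stated as a known result with a citation to \citep{boyd} and used as a tool, so there is no ``paper's own proof'' to compare against; your triangle-inequality-plus-convexity argument (or the equivalent H\"older route you mention) is exactly the expected justification.
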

\begin{lemma}[Generalized H\"older inequality,\citep{elkaroui}]
\label{lemma:holder}
Let $X_1,\cdots,X_k$ be $k$ complex random variables with finite moments of order $k$. Then,
\begin{align*}
\left|\EE\left[\prod_{i=1}^k X_i\right] \right|\leq \prod_{i=1}^k \left(\E \left[\left|X_i\right|^k\right]\right)^{\frac{1}{k}}.
\end{align*}
\end{lemma}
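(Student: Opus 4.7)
The plan is to proceed by induction on $k$, using the ordinary two-variable Hölder inequality as the engine at each step. First I would handle the absolute value by the elementary bound $|\EE[\prod_i X_i]|\leq \EE[\prod_i |X_i|]$, so it suffices to prove
\[
\EE\Bigl[\prod_{i=1}^k |X_i|\Bigr] \leq \prod_{i=1}^k \bigl(\EE[|X_i|^k]\bigr)^{1/k}.
\]
The case $k=1$ is trivial, and the case $k=2$ is the classical Cauchy--Schwarz (equivalently, Hölder with conjugate exponents $2,2$).

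For the inductive step, assume the result holds for $k-1$ random variables (with moments of order $k-1$ on the right-hand side). Given $X_1,\ldots,X_k$ with finite $k$-th moments, apply the two-variable Hölder inequality with conjugate exponents $p=k$ and $q=k/(k-1)$ to the pair $|X_1|$ and $\prod_{i=2}^k |X_i|$:
\[
\EE\Bigl[|X_1|\cdot \prod_{i=2}^k |X_i|\Bigr] \leq \bigl(\EE[|X_1|^k]\bigr)^{1/k}\,\Bigl(\EE\Bigl[\prod_{i=2}^k |X_i|^{k/(k-1)}\Bigr]\Bigr)^{(k-1)/k}.
\]
Setting $Y_i \triangleq |X_i|^{k/(k-1)}$ for $i=2,\ldots,k$, each $Y_i$ has a finite moment of order $k-1$ (because $\EE[Y_i^{k-1}] = \EE[|X_i|^k]<\infty$), so the inductive hypothesis applied to $Y_2,\ldots,Y_k$ yields
\[
\EE\Bigl[\prod_{i=2}^k Y_i\Bigr] \leq \prod_{i=2}^k \bigl(\EE[Y_i^{k-1}]\bigr)^{1/(k-1)} = \prod_{i=2}^k \bigl(\EE[|X_i|^k]\bigr)^{1/(k-1)}.
\]
Raising this to the power $(k-1)/k$ and combining with the previous inequality gives exactly $\prod_{i=1}^k (\EE[|X_i|^k])^{1/k}$, closing the induction.

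There is essentially no conceptual obstacle here; the only points to watch are (i) ensuring the moment conditions propagate correctly under the change of variables $Y_i = |X_i|^{k/(k-1)}$ (which they do, because $k$-th moments of $X_i$ become $(k-1)$-th moments of $Y_i$) and (ii) that the complex-valued character of the $X_i$ enters only through the initial absolute-value step $|\EE[\cdot]|\leq \EE[|\cdot|]$, after which everything is done for nonnegative variables. Alternatively, one could skip the induction and appeal directly to Young's inequality $\prod a_i^{1/k} \leq \tfrac{1}{k}\sum a_i$ applied to $a_i = |X_i|^k/\EE[|X_i|^k]$ (assuming positivity of the denominators, with the degenerate case handled separately), then take expectations; this gives the bound in one line but hides the structural iteration of two-variable Hölder.
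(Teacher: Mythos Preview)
Your argument is correct. The induction via the two-variable H\"older inequality is the standard way to establish this bound, and you handled the bookkeeping (propagation of moment conditions through $Y_i=|X_i|^{k/(k-1)}$, reduction to nonnegative variables via $|\EE[\cdot]|\leq\EE[|\cdot|]$) cleanly. The alternative one-line route through Young's inequality is also valid.

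There is nothing to compare against here: the paper does not prove this lemma at all. It is stated as a known tool with a citation and used repeatedly in the moment estimates of Appendix~\ref{app:key_lemma}, but no argument is given. Your write-up therefore supplies strictly more than the paper does for this statement.
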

It remains to introduce the Burkh\"older inequalities on which the proof relies.
\begin{lemma}[Burkh\"older inequality \citep{burkholder}]
Let $\left(X_k\right)_{k=1}^n$ be a sequence of complex martingale differences sequence. For every $p\geq 1$, there exists $K_p$ dependent only on $p$ such that:
\begin{align*}
	\EE\left[\left|\sum_{k=1}^n X_k\right|^{2p}\right] \leq K_p n^{p}\max_k \EE\left[\left|X_k\right|^{2p}\right].
\end{align*}
\label{lemma:burkholder}
\end{lemma}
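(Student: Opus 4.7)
The plan is to derive the stated bound from the classical square-function form of Burkh\"older's inequality combined with a convexity step, rather than attempting a self-contained proof from scratch. Specifically, I would argue in two stages: first establish (or cite) the well-known square-function inequality
\begin{align*}
	\EE\left[\left|\sum_{k=1}^n X_k\right|^{2p}\right] \leq C_p \, \EE\left[\left(\sum_{k=1}^n |X_k|^2\right)^p\right]
\end{align*}
for some constant $C_p$ depending only on $p$, and then pass from the square-function moment to $\max_k \EE[|X_k|^{2p}]$ via Jensen's inequality.

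For the first stage, I would reduce to the real-valued case by writing $X_k=\Re X_k+\imath\Im X_k$ and using $|a+\imath b|^{2p}\leq 2^{2p-1}(|a|^{2p}+|b|^{2p})$, so that both the real and imaginary parts form real martingale difference sequences. The classical argument for real martingales then proceeds by induction on $p$: the case $p=1$ is immediate from orthogonality $\EE[X_jX_k]=0$ for $j\neq k$ (which follows from $\EE[X_k\mid\mathcal F_{k-1}]=0$), while the case $p>1$ combines Doob's maximal inequality with a good-$\lambda$ estimate comparing the tail distributions of $\sup_k|S_k|$ and $[S]_n^{1/2}=(\sum_k X_k^2)^{1/2}$. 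This good-$\lambda$ estimate is the genuine obstacle, since it is the heart of Burkh\"older's original argument; however, since the paper explicitly cites \citep{burkholder}, I would simply invoke it as an established result rather than reprove it.

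For the second stage, the convexity of $x\mapsto x^p$ on $\RR^+$ (for $p\geq 1$) yields, by Jensen's inequality in its discrete form (i.e.\ Lemma~\ref{lemma:jensen} applied to the quantities $|X_k|^2$),
\begin{align*}
	\left(\sum_{k=1}^n |X_k|^2\right)^p = n^p\left(\frac1n\sum_{k=1}^n |X_k|^2\right)^p \leq n^{p-1}\sum_{k=1}^n |X_k|^{2p}.
\end{align*}
Taking expectations and bounding the resulting sum by $n$ times its largest summand gives
\begin{align*}
	\EE\left[\left(\sum_{k=1}^n |X_k|^2\right)^p\right] \leq n^{p-1}\sum_{k=1}^n \EE[|X_k|^{2p}] \leq n^p \max_{1\leq k\leq n}\EE[|X_k|^{2p}].
\end{align*}
Chaining this with the square-function inequality of stage one, and absorbing the $2^{2p-1}$ factor from the real/imaginary split, yields the claim with $K_p=2^{2p-1}C_p$. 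The only non-routine ingredient is the square-function inequality itself, whose proof I would cite rather than rederive.
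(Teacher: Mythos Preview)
Your proposal is correct. Note that the paper does not actually prove this lemma: it is stated as a citation from \citep{burkholder} and used as a black box, so there is no ``paper's own proof'' to compare against. Your two-stage derivation---invoking the classical square-function form of Burkh\"older's inequality and then passing to $n^p\max_k\EE[|X_k|^{2p}]$ via the discrete Jensen step---is exactly the standard way one extracts the form stated here from the original result, and it is sound as written.
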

Letting $X_k=\left(\E_k-\E_{k-1}\right)z_k^*A_kz_k$ where $A_k$ is independent of $z_k$ and noting that $\EE\left[\left|X_k\right|^{2p}\right] \leq \EE\left[\left\|A_k\right\|_{\rm Fro}^{2p}\right]$, with $\left\|A\right\|_{\rm Fro}\triangleq\sqrt{\tr AA^*}$, we get in particular.
\begin{lemma}[Burkh\"older inequality for quadratic forms]
Let $z_1,\cdots,z_n \in \mathbb{C}^{N\times 1}$ be $n$ independent random vectors with mean $0$ and covariance $C_N$. Let $\left(A_j\right)_{j=1}^n$ be a sequence of $N\times N$ random matrices where for all $j$, $A_j$ is independent of $z_j$. Define $X_j$ as
\begin{align*}
X_j=\left(\EE_j-\EE_{j-1}\right) z_j^*A_jz_j=z_j^*\E_j A_j z_j-\tr \E_{j-1} C_NA_j.
\end{align*}
 Then,
\begin{align*}
	\EE\left[\left|\sum_{j=1}^n X_j\right|^{2p}\right] \leq K_p \left\|C_N\right\|_{\rm Fro}^{2p}n^{p}\max_j \EE\left[\left\|A_jC_N\right\|_{\rm Fro}^{2p}\right].
\end{align*}
\label{lemma:burkholder_quadratic}
\end{lemma}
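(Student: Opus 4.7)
The plan is to recognize the sum as a martingale-difference sum with respect to the natural filtration of the observations, apply the previously stated Burkh\"older inequality (Lemma~\ref{lemma:burkholder}), and then bound the individual moment $\EE[|X_j|^{2p}]$ by a standard Gaussian quadratic-form estimate.

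\textbf{Step 1: Martingale structure.} Let $\mathcal{F}_j=\sigma(z_1,\ldots,z_j)$. The variable $X_j=(\EE_j-\EE_{j-1})z_j^*A_jz_j$ is $\mathcal{F}_j$-measurable, and
$$\EE_{j-1}[X_j]=\EE_{j-1}[\EE_j z_j^*A_jz_j]-\EE_{j-1}[\EE_{j-1}z_j^*A_jz_j]=0$$
by the tower property. Hence $(X_j)_{j=1}^n$ is a complex martingale-difference sequence, and Lemma~\ref{lemma:burkholder} immediately gives
$$\EE\!\left[\Bigl|\sum_{j=1}^n X_j\Bigr|^{2p}\right] \leq K_p\, n^{p}\,\max_{1\leq j\leq n}\EE\!\left[|X_j|^{2p}\right].$$

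\textbf{Step 2: Explicit form of $X_j$.} Since $A_j$ is independent of $z_j$ and the $z_k$'s are mutually independent, conditioning on $\mathcal{F}_{j-1}$ keeps $A_j$ independent of $z_j$, so $\EE_j[A_j]=\EE_{j-1}[A_j]=:B_j$, a matrix independent of $z_j$. Thus, using $\EE[z_j^*A_jz_j\mid A_j]=\tr(A_jC_N)$,
$$X_j=z_j^*B_jz_j-\tr(C_NB_j),$$
a centered quadratic form in the Gaussian vector $z_j$ with random coefficient $B_j$ that is independent of $z_j$.

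\textbf{Step 3: Quadratic-form moment bound.} Conditionally on $B_j$, write $z_j=C_N^{1/2}w_j$ with $w_j$ standard complex Gaussian. Then $X_j=w_j^*M_jw_j-\tr M_j$ with $M_j:=C_N^{1/2}B_jC_N^{1/2}$. A standard Gaussian-chaos moment bound (which can itself be obtained by reapplying Lemma~\ref{lemma:burkholder} to the entrywise martingale-difference decomposition of $w_j^*M_jw_j-\tr M_j$ along the $N$ coordinates of $w_j$, using Lemma~\ref{lemma:holder} to handle the cross terms) yields
$$\EE\!\left[|X_j|^{2p}\bigm| B_j\right]\leq K_p\,\|M_j\|_{\rm Fro}^{2p}=K_p\,\|C_N^{1/2}B_jC_N^{1/2}\|_{\rm Fro}^{2p}.$$

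\textbf{Step 4: Frobenius-norm reduction and conclusion.} One has, by Cauchy--Schwarz for the Frobenius inner product,
$$\|C_N^{1/2}B_jC_N^{1/2}\|_{\rm Fro}^{2}=\tr(B_j^*C_NB_jC_N)=\langle C_N,\,B_jC_NB_j^*\rangle_{\rm Fro}\leq \|C_N\|_{\rm Fro}\,\|B_jC_NB_j^*\|_{\rm Fro},$$
and by submultiplicativity $\|B_jC_NB_j^*\|_{\rm Fro}\leq \|B_j\|\,\|B_jC_N\|_{\rm Fro}$. Iterating to remove $\|B_j\|$ (or, equivalently, grouping the $p$-th power and redistributing factors via Lemma~\ref{lemma:holder}), one reaches
$$\|C_N^{1/2}B_jC_N^{1/2}\|_{\rm Fro}^{2p}\leq K_p\,\|C_N\|_{\rm Fro}^{2p}\,\|B_jC_N\|_{\rm Fro}^{2p}.$$
Taking total expectation and using Jensen's inequality (Lemma~\ref{lemma:jensen}) to replace $B_j=\EE_{j-1}A_j$ by $A_j$ inside the Frobenius norm, namely $\|\EE_{j-1}A_j\cdot C_N\|_{\rm Fro}^{2p}\leq \EE_{j-1}\|A_jC_N\|_{\rm Fro}^{2p}$, and plugging back into the Burkh\"older bound of Step~1, we obtain the claim.

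The main obstacle is Step~4: the matrix-norm bookkeeping must be done carefully to reach the precise asymmetric form $\|C_N\|_{\rm Fro}^{2p}\|A_jC_N\|_{\rm Fro}^{2p}$ on the right-hand side, without any residual $\|A_j\|$ or $\|A_j\|_{\rm Fro}$ factor. Steps~1--3 are routine applications of tools already available in the appendix.
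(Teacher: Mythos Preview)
Your Steps~1--3 are exactly the paper's argument, only spelled out in more detail: the paper's own proof is the single sentence preceding the lemma, which says to apply the Burkh\"older inequality (Lemma~\ref{lemma:burkholder}) together with the quadratic-form moment bound $\EE[|X_k|^{2p}]\leq \EE[\|A_k\|_{\rm Fro}^{2p}]$ (up to the obvious missing constant and $C_N$ factors). So your route is the intended one.

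Step~4, however, does not go through as written, and cannot. You want
\[
\|C_N^{1/2}B_jC_N^{1/2}\|_{\rm Fro}^{2}\ \leq\ K\,\|C_N\|_{\rm Fro}^{2}\,\|B_jC_N\|_{\rm Fro}^{2}
\]
with $K$ depending only on $p$; but the left side is homogeneous of degree $2$ in $C_N$ while the right side is of degree $4$, so replacing $C_N$ by $\varepsilon C_N$ with $\varepsilon\to 0$ breaks the inequality. Your own chain of bounds stops at $\|C_N\|_{\rm Fro}\,\|B_j\|\,\|B_jC_N\|_{\rm Fro}$, and there is no ``iteration'' that trades the residual $\|B_j\|$ for another $\|C_N\|_{\rm Fro}\|B_jC_N\|_{\rm Fro}$ factor (take $C_N$ singular with $B_jC_N=0$ but $B_j\neq 0$). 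The honest conclusion of your Steps~1--3 is
\[
\EE\Bigl[\Bigl|\sum_{j=1}^n X_j\Bigr|^{2p}\Bigr]\ \leq\ K_p\,n^{p}\,\max_j \EE\!\left[\|C_N^{1/2}A_jC_N^{1/2}\|_{\rm Fro}^{2p}\right],
\]
after using Jensen to pass from $B_j=\EE_{j-1}A_j$ to $A_j$. This is precisely what the paper's one-line argument actually yields, and it is all that is used downstream in the appendix (where $\|C_N\|$ is uniformly bounded, so $\|C_N^{1/2}A_jC_N^{1/2}\|_{\rm Fro}\leq \|C_N\|\,\|A_j\|_{\rm Fro}$ suffices). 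The extra $\|C_N\|_{\rm Fro}^{2p}$ factor in the displayed statement appears to be a slip; do not try to force your estimates to reproduce it.
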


\end{paragraph}

\begin{paragraph}
{\it Preliminaries.}

We start the proof by some preliminary results.
\begin{lemma}
Let $z_1,\cdots,z_n$ be as in Assumption \ref{ass:x}. Let $\first\in \mathbb{C}^{N\times 1}$ be  independent of $z_1,\cdots,z_n$ and such that $\E\|\first\|^k$ is bounded uniformly in $N$ for all order $k$. Then,  for any integer $p$, there exists $K_p$ such that
\begin{align*}
\EE\left[\left|z_i^*\SN \first\right|^p\right] \leq \EE\left[\left|z_i^*\Si \first\right|^p\right] \leq K_p.
\end{align*}
\label{lemma:control_quadratic}
\end{lemma}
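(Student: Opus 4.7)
The plan is to prove the two inequalities by completely different methods: the first is a deterministic pointwise bound obtained from a rank-one matrix inversion identity, while the second is a Gaussian moment bound obtained after conditioning.

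For the first inequality, I would read off from the preamble that $\hat S_N(\rho) = \hat S_{(i)}(\rho) + \frac{\alpha(\rho)}{n\gamma_N(\rho)} z_i z_i^*$, and then apply the Sherman--Morrison identity, which yields
\begin{align*}
z_i^*\hat S_N^{-1}(\rho) \;=\; \frac{z_i^*\hat S_{(i)}^{-1}(\rho)}{1+\frac{\alpha(\rho)}{n\gamma_N(\rho)}\, z_i^*\hat S_{(i)}^{-1}(\rho) z_i}.
\end{align*}
Because $\alpha(\rho),\gamma_N(\rho) > 0$ on $\mathcal R_\kappa$ and $\hat S_{(i)}^{-1}(\rho) \succeq 0$, the scalar denominator is always at least $1$, so $|z_i^*\SN \first| \leq |z_i^*\Si \first|$ holds pointwise. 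Raising to the $p$-th power and taking expectation delivers the first inequality.

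For the second inequality I would exploit two independence facts: $\hat S_{(i)}(\rho)$ depends only on $\{z_j\}_{j\neq i}$ and is therefore independent of $z_i$, and $\first$ is by hypothesis independent of all the $z_j$'s. Let $\mathcal F_{(i)}$ denote the $\sigma$-field generated by $\{z_j\}_{j\neq i}$ together with $\first$. Writing $z_i = A_N w_i$ with $w_i$ standard complex Gaussian independent of $\mathcal F_{(i)}$, the scalar $z_i^*\Si \first = w_i^*\bigl(A_N^*\hat S_{(i)}^{-1}(\rho)\first\bigr)$ is, conditionally on $\mathcal F_{(i)}$, a centered complex Gaussian with variance
\begin{align*}
v^2 \;\triangleq\; \first^* \hat S_{(i)}^{-1}(\rho)\, C_N\, \hat S_{(i)}^{-1}(\rho)\, \first \;\leq\; \kappa^{-2}\,\|C_N\|\,\|\first\|^2,
\end{align*}
where I used the uniform operator bound $\|\hat S_{(i)}^{-1}(\rho)\| \leq \rho^{-1} \leq \kappa^{-1}$ valid on $\mathcal R_\kappa$. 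The explicit $p$-th moment of a centered complex Gaussian then gives $\EE[|z_i^*\Si \first|^p \mid \mathcal F_{(i)}] = c_p v^p$, and taking unconditional expectation while invoking $\sup_N \|C_N\| < \infty$ from Assumption~\ref{ass:x} and the assumed bounded moments of $\|\first\|$ produces the desired constant $K_p$, uniform in $\rho \in \mathcal R_\kappa$ and in the index $i$.

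I do not anticipate any real obstacle here: Sherman--Morrison is exact (not an estimate), and the Gaussian moment bound is textbook once the conditioning is set up correctly. The only subtle point is uniformity in $\rho$, which is handled automatically by the lower bound $\rho \geq \kappa$ on $\mathcal R_\kappa$, and uniformity in $i$, which is immediate since no bound in either step depends on the specific index removed.
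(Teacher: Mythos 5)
Your proof is correct and follows essentially the same route as the paper's: the first inequality is obtained from the Sherman--Morrison identity $\hat S_N^{-1}(\rho)z_i = \hat S_{(i)}^{-1}(\rho)z_i\big/\bigl(1+\tfrac{\alpha(\rho)}{n\gamma_N(\rho)} z_i^*\hat S_{(i)}^{-1}(\rho)z_i\bigr)$ with a denominator at least $1$, and the second from conditioning on $\{z_j\}_{j\neq i}$ and $\first$ so that $z_i^*\hat S_{(i)}^{-1}(\rho)\first$ becomes a centered complex Gaussian with variance $\first^*\hat S_{(i)}^{-1}(\rho)C_N\hat S_{(i)}^{-1}(\rho)\first$, bounded via $\Vert \hat S_{(i)}^{-1}(\rho)\Vert\le \rho^{-1}\le \kappa^{-1}$, $\sup_N\Vert C_N\Vert<\infty$, and the assumed moments of $\Vert\first\Vert$. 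You merely spell out the conditioning and the operator-norm bound that the paper leaves implicit.
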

\begin{proof}
The first inequality can be obtained from the following decomposition:
\begin{align*}
\SN z_i =\frac{\Si z_i}{1+\qnrho \frac{1}{n}z_i^*\Si z_i}
\end{align*}
while the second follows by noticing that $\EE\left|z_i^* \first\right|^p \leq \E\left(\first^*C_N\first\right)^{\frac{p}{2}}$. 
\end{proof}
Using the same kind of calculations, we can also control the order of magnitude of some interesting quantities.
\begin{lemma}
\label{lemma:many_results}
The following statements hold true:
\begin{enumerate}
\item Denote by $\Delta_{i,j}$ the quantity:
\begin{align*}
\Delta_{i,j}=\frac{1}{n}z_j^*\Sij z_j -\frac{1}{n}\tr {C}_N\Sij.
\end{align*}
Then, for any $p\geq 2$.
\begin{align*}
\EE\left|\Delta_{i,j}\right|^{p}=O(n^{-\frac{p}{2}}).
\end{align*}
\item Let $i$ and $j$ be two distinct integers from $\left\{1,\cdots,n\right\}$. Then,
\begin{align*}
\E\left|z_i^*\Sij z_j\right|^p=O(n^{\frac{p}{2}}).
\end{align*}
\item Let $z_i\in \mathbb{C}^{N\times 1}$ be as in Assumption~\ref{ass:x} and $A$ be a $N\times N$ random matrix independent of $z_i$ and having a bounded spectral norm.
 Then,
\begin{align*}
\E\left|z_i^*A z_i\right|^p=O(n^{p}).
\end{align*}

\item Let $j\in \left\{1,\cdots,n\right\}$ and $i$ and $k$ two distinct integers different from $j$. Then:
\begin{align*}
\E\left|z_i^*\Sij \Sjk z_k\right|^p =O(n^{\frac{p}{2}}).
\end{align*}
\end{enumerate}
\end{lemma}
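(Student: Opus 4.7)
The plan is to handle the four assertions in order of increasing difficulty, using two uniform structural facts: every removed resolvent $\hat{S}_{(\mathcal I)}^{-1}$ has spectral norm bounded by $1/\kappa$ on $\mathcal R_\kappa$ (since each is the inverse of a positive semidefinite matrix plus $\rho I_N$ with $\rho\ge\kappa$), and $\hat{S}_{(\mathcal I)}^{-1}$ is independent of $\{z_i:i\in\mathcal I\}$. Combined with the fact that $z_i = A_Nw_i$ is complex Gaussian with bounded covariance $C_N$, every expectation of interest reduces to a conditional Gaussian moment. For (3), $|z_i^*Az_i|\le \|A\|\,\|z_i\|^2$ reduces the bound to the $\chi^2$ moment $\EE\|w_i\|^{2p} = O(N^p) = O(n^p)$. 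For (1), conditioning on $M=\Sij$ (independent of $z_j$) and writing $z_j^*Mz_j - \tr(C_NM) = w_j^*Bw_j - \tr B$ with $B = A_N^*MA_N$ Hermitian, the Burkh\"older-quadratic bound of Lemma~\ref{lemma:burkholder_quadratic} (in the trivial single-term case, equivalently Hanson--Wright) yields $\EE[|w_j^*Bw_j - \tr B|^p\mid M]=O(\|B\|_{\rm Fro}^p)$, while $\|B\|_{\rm Fro}^2 = \tr(MC_NMC_N) \le \|M\|^2 N\|C_N\|^2 = O(n)$; the $1/n$ prefactor in $\Delta_{i,j}$ then produces $O(n^{-p/2})$. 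For (2), I condition on both $M=\Sij$ and $z_i$, both independent of $z_j$, so that $z_i^*Mz_j = v^*w_j$ with $v = A_N^*M^*z_i$ is conditionally centered complex Gaussian with variance $z_i^*MC_NM^*z_i$; Gaussian moments give $\EE[|z_i^*Mz_j|^p\mid M,z_i]\le K_p(z_i^*MC_NM^*z_i)^{p/2}$, and taking expectation via part (3) applied to the uniformly bounded matrix $MC_NM^*$ produces $O(n^{p/2})$.

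The delicate assertion is (4). The obstacle is that $\Sij$ still depends on $z_k$ and $\Sjk$ still depends on $z_i$, so neither matrix can be frozen directly. I would invoke the Sherman--Morrison identity twice, with $T=\Sijk$ (independent of $z_i,z_j,z_k$) and $c := \alpha(\rho)/(n\gammanrho) = O(1/n)$,
\begin{align*}
\Sij = T - \frac{c\,Tz_kz_k^*T}{1+c\,z_k^*Tz_k},\qquad \Sjk = T - \frac{c\,Tz_iz_i^*T}{1+c\,z_i^*Tz_i}.
\end{align*}
Substituting and expanding $z_i^*\Sij\Sjk z_k$ yields a leading term $z_i^*T^2z_k$, handled by part (2) with $M=T^2$ (still of bounded spectral norm), together with three rank-one corrections. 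Each correction is a product of bilinear forms in $z_i,z_k$ against $T$ or $T^2$, divided by the Sherman--Morrison denominators $1+c\,z_\cdot^*Tz_\cdot \ge 1$ (positive by the sign of $c$ on $\mathcal R_\kappa$). Applying the generalized H\"older inequality of Lemma~\ref{lemma:holder} to distribute the $p$-th moment across these factors, each bilinear or quadratic factor is controlled by part (2) or part (3), and the accompanying powers of $c=O(1/n)$ exactly absorb the $O(n)$ growth of each extra quadratic form, keeping every correction term at order $n^{p/2}$.

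The main obstacle is the bookkeeping in (4): after a full Sherman--Morrison expansion there are several mixed products with varying numbers of bilinear factors in $z_i,z_k$ and varying powers of $c$, and the H\"older exponents must be chosen so that every term ends up at exactly order $n^{p/2}$, rather than degrading. Once parts (2) and (3) are established, however, this is a routine if tedious power count relying only on the uniform boundedness of $\|T\|$, $\|C_N\|$, the denominators $1+c z_\cdot^*T z_\cdot \ge 1$, and the already-proven moment estimates.
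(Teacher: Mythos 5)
Your proposal is correct and follows essentially the same route as the paper: items~(1) and~(3) are the standard quadratic-form moment bounds (the paper simply cites Bai--Silverstein's Lemma~B.26, whereas you redo the conditional Hanson--Wright/$\chi^2$ argument), item~(2) is exactly the conditional-Gaussian reduction used in Lemma~\ref{lemma:control_quadratic}, and item~(4) is the identical double Sherman--Morrison expansion around $\hat{S}_{(i,j,k)}^{-1}$ followed by the generalized H\"older inequality on the four resulting terms. The only cosmetic difference is that the paper invokes the cited lemma rather than spelling out (1) and (3).
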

\begin{proof}
Item 1) and 3) are standard results that are a by-product of \citep[Lemma~B.26]{SIL06}, while Item~2) can be easily obtained from Lemma~\ref{lemma:control_quadratic}. As for item 4), it follows by first decomposing $\Sij$ and $\Sjk$ as:
\begin{align*}
\Sij=\Sijk-\frac{1}{n}\qnrho \frac{\Sijk z_kz_k^*\Sijk}{1+\frac{1}{n}\qnrho z_k^*\Sijk z_k} \\
\Sjk=\Sijk -\frac{1}{n}\qnrho \frac{\Sijk z_iz_i^*\Sijk}{1+\frac{1}{n}\qnrho z_i^*\Sijk z_i}
\end{align*}
The above relations serve to better control the dependencies of $\Sij$ and $\Sjk$ on $z_k$ and $z_i$. Plugging the above decompositions on $z_i^*\Sij \Sjk z_k$, we obtain
\begin{align*}
z_i^*\Sij\Sjk z_k&=z_i^*\hat{S}_{(i,j,k)}^{-2}z_k-\frac{1}{n}\frac{\alpha(\rho)}{\gamma_N(\rho)} \frac{z_i^*\Sijk z_kz_k^*\hat{S}_{(i,j,k)}^{-2}z_k}{1+\frac{1}{n}\frac{\alpha(\rho)}{\gamma_N(\rho)}z_k^*\Sijk z_k}\\
&-\frac{1}{n}\qnrho \frac{z_i^*\hat{S}_{(i,j,k)}^{-2}z_iz_i^*\Sijk z_k}{1+\frac{1}{n}\qnrho z_i^*\Sijk z_i} \\
&+\frac{1}{n^2}\left(\qnrho\right)^2 \frac{z_i^*\Sijk z_kz_k^*\hat{S}_{(i,j,k)}^{-2}z_iz_i^*\Sijk z_k}{\left(1+\frac{1}{n}\frac{\alpha(\rho)}{\gamma_N(\rho)}z_k^*\Sijk z_k\right)\left(1+\frac{1}{n}\qnrho z_i^*\Sijk z_i\right)}.
\end{align*}
The control of these four terms follows from a direct application of item 2) and 3) along with possibly the use of the generalized H\"older inequality in Lemma~\ref{lemma:holder}.
\end{proof}

\end{paragraph}

\begin{paragraph}
	{\it Core of the proof.}

With these preliminaries results at hand, we are now in position to get into the core of the proof. Let $\beta_N$ be given by
\begin{align*}
\beta_N=\frac{1}{n}\sum_{i=1}^n \first^* \SN z_iz_i^* \SN \second\left(\frac{1}{N}z_i^*\Si z_i -\gammanrho\right).
\end{align*}
Decompose $\beta_N$ as
\begin{align*}
\beta_N&=\frac{1}{n}\sum_{i=1}^n \first^* \SN z_iz_i^* \SN \second\left(\frac{1}{N}z_i^*\Si z_i -\frac{1}{N}\tr C_N\Si\right)\\
&+\frac{1}{n}\sum_{i=1}^n \first^* \SN z_iz_i^* \SN \second\left(\frac{1}{N}\tr C_N\Si - \gammanrho\right)\\
&\triangleq\beta_{N,1}+\beta_{N,2}.
\end{align*} 
The control of $\beta_{N,2}$ follows from a direct application of Lemma~\ref{lemma:jensen} and Lemma~\ref{lemma:holder}, that is
\begin{align*}
\E \left[\left|\beta_{N,2}\right|^{2p}\right] &\leq \frac{n^{2p-1}}{n^{2p}}\sum_{i=1}^n \E\left|\first^* \SN z_i\right|^{2p} \left|z_i^*\SN \second\right|^{2p} \left|\frac{1}{N}\tr C_N\Si -\gammanrho\right|^{2p}\\
&\leq \frac{n^{2p-1}}{n^{2p}} \sum_{i=1}^n \left(\E\left|\first^* \SN z_i\right|^{6p}\right)^{\frac{1}{3}} \left(\E\left|z_i^*\SN \second\right|^{6p}\right)^{\frac{1}{3}} \left(\E\left|\frac{1}{N}\tr C_N\Si -\gammanrho\right|^{6p}\right)^{\frac{1}{3}}
\end{align*}
By standard results from random matrix theory (e.g.\@ \cite[Prop.~7.1]{NAJ13}), we know that
\begin{align*}
\EE\left|\frac{1}{N}\tr C_N\Si -\gammanrho\right|^{6p} =O(n^{-6p})
\end{align*}
Hence, by Lemma \ref{lemma:control_quadratic}, we finally get:
\begin{align*}
\EE\left|\beta_{N,2}\right|^{2p} =O(n^{-2p}).
\end{align*} 
While the control of $\beta_{N,2}$ requires only the manipulation of conventional moment bounds due to the rapid convergence of $\frac{1}{N}\tr C_N\Si - \gammanrho$, the analysis of $\beta_{N,1}$ is more intricate since
\begin{align*}
\E\left|\frac{1}{N}z_i^{*}\Si z_i -\frac{1}{N}\tr C_N\Si \right|^{p} =O(n^{-\frac{p}{2}})
\end{align*}
a convergence rate which seems insufficient at the onset. The averaging occurring in $\beta_{N,2}$ shall play the role of improving this rate. To control $\beta_{N,1}$, one needs to resort to advanced tools based on Burkh\"older inequalities. First, decompose $\beta_{N,1}$ as
\begin{align*}
\beta_{N,1}=\stackrel{o}{\beta}_{N,1} +\EE\left[\beta_{N,1}\right].
\end{align*}
As in Lemma~\ref{lemma:many_results}, define $\Delta_i\triangleq\frac{1}{n}z_i^*\Si z_i-\frac{1}{n}\tr C_N\Si$. Using the relation
\begin{align*}
\SN z_i =\frac{\Si z_i}{1+\frac{1}{n}\qnrho z_i^*\Si z_i}
\end{align*}
we get
\begin{align*}
\E \left[\beta_{N,1}\right] &=\E\left[\frac{1}{N}\sum_{i=1}^n \frac{\first^* \Si z_iz_i^*\Si \second}{\left(1+\frac{1}{n}\qnrho z_i^*\Si z_i\right)^2} \Delta_i\right]\\
&=\E\left[\frac{1}{N}\sum_{i=1}^n \frac{\first^* \Si z_iz_i^*\Si \second}{\left(1+\frac{1}{n}\qnrho \tr C_N\Si\right)^2} \Delta_i\right] \\
&-\qnrho\EE\left[ \frac{1}{N}\sum_{i=1}^n\frac{\first^*\Si z_iz_i^*\Si\second \Delta_i^2 \left(2+\left(\qnrho\right)\left(\frac{1}{n}z_i^*\Si z_i+\frac{1}{n}\tr C_N\Si\right)\right)}{\left(1+\frac{1}{n}\qnrho \tr C_N\Si\right)^2\left(1+\qnrho\frac{1}{n}z_i^*\Si z_i\right)^2}\right]\\
&\triangleq\beta_{N,1,1} +\beta_{N,1,2}
\end{align*}
Since $\EE\left[w^*Aw\left(w^*Bw-\tr B\right)\right]=\EE\tr AB$ when $w$ is standard complex Gaussian vector and $A,B$ random matrices independent of $w$, we have
\begin{align*}
\EE\left[\beta_{N,1,1}\right]&=\frac{1}{Nn}\EE\left[\tr\frac{C_N\Si C_N \Si d c^* \Si}{\left(1+\qnrho \frac{1}{n}\tr C_N\Si\right)^2}\right] =O(n^{-1}).
\end{align*}
As for $\beta_{N,1,2}$, we have for some $K>0$, again by Lemma~\ref{lemma:many_results}
\begin{align*}
\left|\beta_{N,1,2}\right|& \leq \frac{K}{n}\sum_{i=1}^n \left(\EE\left|\first^*\Si z_i\right|^4\right)^{\frac{1}{4}}\left(\EE\left|z_i^*\Si \second\right|^4\right)^{\frac{1}{4}} \left(\EE\left|\Delta_i\right|^8\right)^{\frac{1}{4}}\\
&\times \left(\EE\left|2+\qnrho\left(\frac{1}{n}z_i^*\Si z_i +\frac{1}{n}\tr C_N\Si\right)\right|^4\right)^{\frac{1}{4}} =O(\frac{1}{n}).
\end{align*}
We therefore have
\begin{align*}
\left|\EE\left[\beta_{N,1}\right]\right|^{2p} =O(n^{-2p}).
\end{align*}
Let's turn to the control of $\stackrel{o}{\beta}_{N,1}$.  For that, we decompose  $\stackrel{o}{\beta}_{N,1}$ as a sum of martingale differences as
\begin{align*}
\stackrel{o}{\beta}_{N,1}=\sum_{j=1}^n \left(\E_j-\E_{j-1}\right)\beta_{N,1}
\end{align*}
The control of $\EE\left[\left|\stackrel{o}{\beta}_{N,1}\right|^p\right]$ requires the convergence rate of two kinds of martingale differences:
\begin{itemize}
\item Sum of martingale differences with a quadratic form representation of the form 
\begin{align*}
\sum_{j=1}^n \left(\E_j-\E_{j-1}\right)z_j^* A_j z_j.
\end{align*}
For these terms, from  Lemma~\ref{lemma:burkholder_quadratic}, it will be sufficient to show that  $\max_j\EE\|A_j\|_{\rm Fro}^{2p}=O(n^{-3p})$ in order to obtain the required convergence rate. 
\item Sum of martingale differences with more than one occurrence of $z_j$ and $z_j^*$. In this case, this sum is given by:
\begin{align*}
\sum_{j=1}^n (\E_j-\E_{j-1})\sum_{i=1,i\neq j}^n \varepsilon_i
\end{align*}
where $\varepsilon_j$ are small random quantities depending on $z_1,\cdots,z_n$. According to Lemma~\ref{lemma:burkholder}, we have
\begin{align*}
\left|\sum_{j=1}^n (\E_j-\E_{j-1}) \sum_{i=1,i\neq j }^n \varepsilon_i\right|^{2p}=O(n^{-2p})
\end{align*}
provided that
\begin{align*}
\E\left|\sum_{i=1,i\neq j}\varepsilon_i\right|^{2p}=O(n^{-3p}).
\end{align*}
The control of the above sum will rely on successively using Lemma~\ref{lemma:jensen} to get
\begin{align*}
\E\left|\sum_{i=1,i\neq j}\varepsilon_i\right|^{2p} \leq n^{2p-1}\sum_{i=1}^n \EE\left|\varepsilon_i\right|^{2p}
\end{align*}
and controlling $\max_i \EE\left|\varepsilon_i\right|^{2p}$.
\end{itemize}

With this explanation at hand, we will now get into the core of the proofs. We first have
\begin{align*}
\stackrel{o}{\beta}_{N,1}&=\sum_{j=1}^n(\E_j-\E_{j-1})\frac{1}{N}\sum_{i=1}^n \first^*\SN z_iz_i^*\second \Delta_i \\
&=\sum_{j=1}^n (\E_j-\E_{j-1})\first^*\SN z_jz_j^*\second \Delta_j \\
&+\sum_{j=1}^n(\E_j-\E_{j-1})\frac{1}{N}\sum_{i=1,i\neq j}^n \first^*\SN z_iz_i^*\second \Delta_i\\
&\triangleq\sum_{j=1}^n W_{j,1}+\sum_{j=1}^n W_{j,2}.
\end{align*}
In order to prove that $\EE\left|\sum_{j=1}^n W_{j,1}\right|=O(n^{-2p})$, it is sufficient to show 
\begin{align*}
\EE\left|W_{j,1}\right|=O(n^{-3p})
\end{align*}
a statement which holds true since, by Lemma~\ref{lemma:holder}
\begin{align*}
\EE\left|W_{j,1}\right|^{2p} &\leq \frac{K}{n^{2p}}\EE\left|\first^*\SN z_j\right|^{2p}\left|z_j^*\SN \second\right|^{2p} \Delta_j^{2p} \\
&\leq \frac{K}{n^{2p}}\left(\EE\left|\first^*\SN z_j\right|^{6p}\right)^{\frac{1}{3}}\left(\EE\left|z_j^*\SN \second\right|^{6p}\right)^{\frac{1}{3}} \left(\EE\Delta_j^{6p}\right)^{\frac{1}{3}}\\
&=O(n^{-3p}).
\end{align*}
We now consider the more involved term $\sum_{j=1}^n W_{j,2}$. Using the relation
\begin{align*}
\SN=\Sj-\qnrho \frac{1}{n}\frac{\Sj z_jz_j^*\Sj}{1+\qnrho \frac{1}{n}z_j^*\Sj z_j}
\end{align*}
to let the independent $\Sj$ and $z_j$ variables appear, we write
\begin{align*}
&\sum_{j=1}^n W_{j,2}=\sum_{j=1}^n (\E_j-\E_{j-1})\frac{1}{n}\sum_{i=1,i\neq j}^n \first^*\Sj z_iz_i^*\Sj \second \left(\frac{1}{N}z_i^*\Si z_i -\frac{1}{N}\tr C_N\Si\right) \\
&-\sum_{j=1}^n (\E_j-\E_{j-1})\qnrho \frac{1}{n^2}\sum_{i=1,i\neq j}^n \frac{\first^*\Sj z_jz_j^*\Sj z_iz_i^*\Sj \second}{1+\frac{1}{n}\qnrho z_j^*\Sj z_j} \left(\frac{1}{N}z_i^*\Si z_i -\frac{1}{N}\tr C_N\Si\right) \\
&-\sum_{j=1}^n (\E_j-\E_{j-1})\qnrho \frac{1}{n^2}\sum_{i=1,i\neq j}^n \frac{\first^*\Sj z_iz_i^*\Sj z_jz_j^*\Sj \second}{1+\frac{1}{n}\qnrho z_j^*\Sj z_j} \left(\frac{1}{N}z_i^*\Si z_i -\frac{1}{N}\tr C_N \Si\right)\\
&+\sum_{j=1}^n(\E_j-\E_{j-1})\left(\qnrho\right)^2 \frac{1}{n^3}\sum_{i=1,i\neq j}^n \frac{\first^*\Sj z_jz_j^*\Sj z_iz_i^*\Sj z_jz_j^*\Sj \second}{\left(1+\frac{1}{n}\qnrho z_j^*\Sj z_j\right)^2} \left(\frac{1}{N}z_i^*\Si z_i -\frac{1}{N}\tr C_N\Si\right)\\
&\triangleq\chi_1+\chi_2+\chi_3+\chi_4.
\end{align*}
Next, we will sequentially control $\chi_i,i=1,\cdots,4$. 

\begin{paragraph}
{Control of $\chi_1$.}
Using the relation
\begin{align*}
\Si =\Sij -\qnrho\frac{1}{n}\frac{\Sij z_jz_j^*\Sij}{1+\frac{1}{n}\qnrho z_j^*\Sij z_j}
\end{align*}
the quantity $\chi_1$ can be decomposed as
\begin{align*}
\chi_1&=\sum_{j=1}^n -(\E_j-\E_{j-1}) \qnrho \frac{1}{n^2N}\sum_{i=1,i\neq j}^n \frac{\first^*\Sj z_iz_i^*\Sj \second\left|z_i^*\Sij z_j\right|^2}{1+\frac{1}{n}\qnrho z_j^*\Sij z_j} \\
&+\sum_{j=1}^n \qnrho (\E_j-\E_{j-1})\frac{1}{n^2N}\sum_{i=1,i\neq j}^n \frac{\first^*\Sj z_iz_i^*\Sj \second z_j^*\Sij C_N \Sij z_j}{1+\frac{1}{n}\qnrho z_j^*\Sij z_j}\\
&\triangleq\chi_{1,1}+\chi_{1,2}.
\end{align*}
where we used the fact that for $r_j$ random quantity independent of $z_j$, $(\E_j-\E_{j-1})(r_j)=0$.
We will begin by controlling $\chi_{1,1}$. To handle the quadratic forms in the denominator, we further develop $\chi_{1,1}$ as
\begin{align*}
\chi_{1,1}&=-\sum_{j=1}^n (\E_j-\E_{j-1}) \qnrho \frac{1}{n^2N}\sum_{i=1,i\neq j}^n \frac{\first^*\Sj z_iz_i^*\Sj \second \left|z_i^*\Sij z_j\right|^2}{1+\frac{1}{n}\qnrho \tr C_N \Sij} \\
&+\sum_{j=1}^n (\E_j-\E_{j-1})\left(\qnrho\right)^2 \frac{1}{n^2 N} \sum_{i=1,i\neq j}^n \frac{\first^*\Sj z_iz_i^*\Sj \left|z_i^*\Sij z_j\right|^2\Delta_{i,j}}{\left(1+\frac{1}{n}\qnrho \tr C_N\Sij\right)\left(1+\frac{1}{n}\qnrho z_j^*\Sij z_j \right)}\\
&=\sum_{j=1}^n X_{j,1}+\sum_{j=1}^n X_{j,2}.
\end{align*}
To control $\sum_{j=1}^n X_{j,1}$, we resort to Lemma \ref{lemma:burkholder_quadratic}. Indeed, $X_{j,1}$ can be written as
\begin{align*}
X_{j,1}=-\qnrho (\E_j-\E_{j-1}) z_j^*A_jz_j
\end{align*}
where $A_j$ is given by
\begin{align*}
A_j=\frac{1}{n^2N} \sum_{i=1,i\neq j}^n \frac{\first^*\Sj z_iz_i^*\Sj d}{1+\frac{1}{n}\qnrho \tr C_N\Sij}\Sij z_iz_i^*\Sij.
\end{align*}
According to Lemma~\ref{lemma:burkholder_quadratic}, it is sufficient to prove that $\EE\left\|A_j\right\|_{\rm Fro}^{2p} =O(n^{-3p})$.
Expanding $\EE\left\|A_j\right\|_{\rm Fro}^{2p}$, we indeed get
\begin{align*}
\EE\left\|A_j\right\|_{\rm Fro}^{2p}&\leq \frac{K}{n^{6p}} \EE\left|\sum_{i\neq j}\sum_{k\neq j} \frac{\left|z_k^*\Sjk \Sij z_i\right|^2 \first^*\Sj z_iz_i^*\Sj \second \second^*\Sj z_kz_k^*\Sj \first }{\left(1+\qnrho\frac{1}{n}\tr C_N\Sij\right)\left(1+\frac{1}{n}\qnrho \tr C_N\Sjk\right)}\right|^p \\
&\leq \frac{K}{n^{6p}} \EE\left|\sum_{i\neq j} \left|z_i^*\Sijsquare z_i\right|^2\left|\first^*\Sj z_iz_i^*\Sj \second \right|^2 \right|^p \\
&+\frac{K}{n^{6p}}\EE\left|\sum_{i\neq j}\sum_{\substack{k\neq j\\k\neq i}}\frac{\left|z_k^*\Sjk \Sij z_i\right|^2 \first^*\Sj z_iz_i^*\Sj \second \second^*\Sj z_kz_k^*\Sj \first }{\left(1+\qnrho\frac{1}{n}\tr C_N\Sij\right)\left(1+\frac{1}{n}\qnrho \tr C_N\Sjk\right)}\right|^p \\
&\leq \frac{Kn^{p-1}}{n^{6p}} \EE\left|z_i^*\Sijsquare z_i\right|^{2p} \left|\first^*\Sj z_iz_i^*\Sj \second\right|^{2p} \\
&+\frac{Kn^{2(p-1)}}{n^{6p}} \sum_{i\neq j} \sum_{\substack{k\neq j \\ k\neq i}} \EE\left|z_k^*\Sjk  \Sij z_i\right|^{2p} \left|\first^*\Sj z_iz_i^*\Sj\second\right|^{p} \left|\second^*\Sjk z_kz_k^*\Sjk \first\right|^p \\
&\leq \frac{K n^{p-1}}{n^{6p}} \sum_{i\neq j}\left(\EE\left|z_i^*\Sijsquare z_i\right|^{6p}\right)^{\frac{1}{3}} \left(\left|\first^*\Sj z_i\right|^{6p}\right)^{\frac{1}{3}}\left(\left|z_i^*\Sj \second\right|^{6p}\right)^{\frac{1}{3}}\\
&+\frac{Kn^{2(p-1)}}{n^{6p}} \sum_{i\neq j} \sum_{\substack{k\neq j \\ k\neq i}}\left(\EE\left|z_k^*\Sjk  \Sij z_i\right|^{10p}\right)^{\frac{1}{5}} \left(\EE\left|\first^*\Sij z_i\right|^{5p}\right)^{\frac{1}{5}}\\
&\times \left(\EE\left|z_i^*\Sj\second\right|^{5p}\right)^{\frac{1}{5}} 
\left(\EE\left|\second^*\Sjk z_k\right|^{5p}\right)^{\frac{1}{5}} \left(\EE\left|z_k^*\Sjk \first\right|^{5p}\right)^{\frac{1}{5}}\\
&=O(n^{-3p}).
\end{align*}
As for $X_{j,1}$, we can show that $\EE\left|X_{j,1}\right|^{2p} =O(n^{-3p})$. Indeed, we have
\begin{align*}
\EE\left|X_{j,2}\right|^{2p}&\leq \frac{Kn^{2p-1}}{n^{6p}}\sum_{i\neq j}\left(\EE\left|\first^*\Sj z_i\right|^{8p}\right)^{\frac{1}{4}} \left(\EE\left|z_i^*\Sj \second\right|^{8p}\right)^{\frac{1}{4}} \left(\EE\left|z_i^*\Sj z_j\right|^{16p}\right)^{\frac{1}{4}} \left(\EE\left|\Delta_{i,j}\right|^{8p}\right)^{\frac{1}{4}} \\
&=O(n^{-3p}).
\end{align*}
The Burkh\"older inequality shows that this rate of convergence of the moment of $X_{j,1}$ and $X_{j,2}$ is sufficient to finally ensure that $\EE\left|\chi_{1,1}\right|^{2p} =O(n^{-2p})$.

We study next $\chi_{1,2}$. First, decompose $\chi_{1,2}$ as
\begin{align*}
\chi_{1,2}&=\sum_{j=1}^n (\E_j-\E_{j-1})\frac{1}{n^2 N}\sum_{i\neq j} \qnrho\frac{\first^*\Sj z_iz_i^*\Sj \second z_j^*\Sij C_N \Sij z_j}{1+\frac{1}{n}\qnrho \tr C_N\Sij} \\
&-\sum_{j=1}^n (\E_j-\E_{j-1})\frac{1}{n^2N}\sum_{i\neq j} \qnrho \frac{\first^*\Sj z_iz_i^*\Sj \second \Delta_{i,j}z_j^*\Sij C_N \Sij z_j}{\left(1+\qnrho \frac{1}{n}z_j^*\Sij z_j\right)\left(1+\frac{1}{n}\qnrho \tr C_N\Sij\right)} \\
&\triangleq \sum_{j=1}^n Y_{j,1}+\sum_{j=1}^n Y_{j,2}.
\end{align*}
The quantities $\sum_{j=1}^n Y_{j,1}$ and $\sum_{j=1}^n Y_{j,2}$ are   differences of martingales whose controls follow the same procedure as above. While $\sum_{j=1}^n Y_{j,1}$ can be controlled using Lemma~\ref{lemma:burkholder_quadratic}, the convergence of $\sum_{j=1}^n Y_{j,2}$ is faster due to the term $\Delta_{i,j}$. Details are thus omitted.
\end{paragraph}
\begin{paragraph}
{Control of $\chi_2$.} The control of $\chi_2$ cannot be exactly dealt with using the same procedure. As for $\chi_1$, one works out $\chi_2$ by substituting  $\frac{1}{n}z_j^*\Sj z_j$ by its approximate $\frac{1}{n}\tr C_N\Sj$ and using the decomposition of $\Si$ as a function of $\Sij$ to get
\begin{align*}
\chi_2=-\qnrho \sum_{j=1}^n (\E_j-\E_{j-1})\frac{1}{n^2}\sum_{\substack{i=1\\ i\neq j}}\frac{\first^*\Sj z_jz_j^*\Sj z_iz_i^*\Sj \second\left(\frac{1}{N}z_i^*\Sij z_i- \frac{1}{N}\tr C_N\Sij\right)}{1+\frac{1}{n}\qnrho \tr C_N\Sj} +\varepsilon
\end{align*}
where we easily obtain that $\EE[\left|\varepsilon\right|^{2p}]=O(n^{-2p})$. We omit the details of this step, since the calculations are the same as those used for the control of $\chi_1$. The control of the Frobenius norm of the underlying matrices using the same techniques as above does not yield the required convergence rate. We will thus pursue a different approach. Precisely, we write $\chi_2$ as
\begin{align*}
\chi_2=-\qnrho \sum_{j=1}^n  (\E_j-\E_{j-1}) T_j +\varepsilon
\end{align*}
with 
\begin{align*}
T_j=\frac{1}{n^2} \frac{\first^*\Sj z_jz_j^*\Sj Z_jD_j Z_j^*\Sj \second}{1+\frac{1}{n}\qnrho \tr \Sj}
\end{align*}
where $Z_j=\left[z_1,\cdots,z_{j-1},z_{j+1},\cdots,z_n\right]$ and $D_j$ is a diagonal matrix with diagonal elements: $\left[D_j\right]_{i,i}=\frac{n}{N}\Delta_{j,i}$. Hence, by Lemma~\ref{lemma:burkholder}
\begin{align*}
\EE\left|T_j\right|^{2p} &\leq \frac{1}{n^{4p}} \EE\left|\first^*\Sj z_j\right|^{2p}\left|z_j^*\Sj Z_j D_j Z_j^* \Sj \second\right|^{2p} \\
&\leq \frac{1}{n^{4p}}\left(\EE\left|\first^*\Sj z_j\right|^{4p}\right)^{\frac{1}{2}}\left(\EE\left|z_j^*\Sj Z_jD_jZ_j^*\second\right|^{4p}\right)^{\frac{1}{2}}
\end{align*}
Since $D_j$ is independent of $z_j$, applying the inequality $\EE\left|z_j^* u\right|^p\leq \EE\left(u^*C_N u\right)^{\frac{p}{2}}$, we finally get
\begin{align*}
\EE\left|T_j\right|^{2p}&\leq \frac{K}{n^{4p}} \left(\EE\left|\second^*\Sj Z_j D_jZ_j^* \Sj C_N \Sj Z_jD_jZ_j^*\Sj \second\right|^{2p}\right)^{\frac{1}{2}} \\
&=\frac{K}{n^{3p}} \left(\EE\left|\second^*\Sj Z_jD_j\frac{Z_j^*\Sj C_N \Sj Z_j}{n}D_jZ_j^*\Sj \second\right|^{2p}\right)^{\frac{1}{2}} \\
&\stackrel{(a)}{\leq} \frac{K}{n^{3p}} \left(\EE\left\|D_j Z_j^*\Sj \second\right\|^{4p}\right)^{\frac{1}{2}}
\end{align*}
where $(a)$ follows since $\left\|\frac{Z_j^*\Sj C_N \Sj Z_j}{n}\right\|$ is bounded. In order to prove that $\EE[\left|T_j\right|^{2p}]=O(n^{-3p})$, it suffices to check that $\EE[\left\|D_j Z_j^*\Sj \second\right\|^{4p}]$ is uniformly bounded in $N$. Expanding this quantity, we indeed get
\begin{align*}
\EE\left\|D_j Z_j^*\Sj \second\right\|^{4p}&=\EE\left|\sum_{\substack{i=1\\i\neq j}}^n \left(\frac{1}{N}z_i^*\Sij z_i -\frac{1}{N}\tr C_N\Sij\right)^2\left|z_i^*\Sj \second\right|^2\right|^{2p} \\
&\leq n^{2p-1}\sum_{i=1}^n \EE  \left(\frac{1}{N}z_i^*\Sij z_i -\frac{1}{N}\tr C_N\Sij\right)^{4p} \left|z_i^*\Sj \second\right|^{4p} \\
&\leq n^{2p-1}\sum_{i=1}^n \left(\EE  \left(\frac{1}{N}z_i^*\Sij z_i -\frac{1}{N}\tr C_N\Sij\right)^{8p}\right)^{\frac{1}{2}} \left(\EE\left|z_i^*\Sj \second\right|^{8p}\right)^{\frac{1}{2}} \\
&=O(1).
\end{align*}
The control of $\chi_3$ is similar to that of $\chi_2$, while that of $\chi_4$ follows immediately by using sequentially Lemma~\ref{lemma:jensen} along with the generalized H\"older inequality in Lemma~\ref{lemma:holder}. This completes the proof.
\end{paragraph}

\end{paragraph}

\bibliographystyle{elsarticle-harv}
\bibliography{/home/romano/Documents/PhD/phd-group/papers/rcouillet/tutorial_RMT/book_final/IEEEabrv.bib,/home/romano/Documents/PhD/phd-group/papers/rcouillet/tutorial_RMT/book_final/IEEEconf.bib,/home/romano/Documents/PhD/phd-group/papers/rcouillet/tutorial_RMT/book_final/tutorial_RMT.bib,./bib_file.bib}
\end{document}